\documentclass[12pt]{amsart}
\usepackage{tikz,array, verbatim}
\usepackage[letterpaper, margin = 1in]{geometry}
\usepackage{amsfonts, amsmath, latexsym, epsfig, caption}
\usepackage{amssymb, color}

\usepackage{epsf}

\usepackage{array}
\usepackage{ragged2e}
\usepackage{hyperref}
\usepackage{textgreek}

\DeclareMathOperator{\lin}{lin}
\DeclareMathOperator{\conv}{conv}

\usepackage[foot]{amsaddr}
\usepackage{orcidlink}

\DeclareRobustCommand{\SkipTocEntry}[5]{}



\title[Voronoi conjecture for five-dimensional parallelohedra]{Voronoi conjecture for five-dimensional parallelohedra}

\begin{document}

\author[A. Garber]{Alexey Garber}
\address{Alexey Garber \orcidlink{0000-0002-9474-2077}, School of Mathematical and Statistical Sciences, The University of Texas Rio Grande Valley, Brownsville, TX, USA}
\email{alexey.garber@utrgv.edu}

\subjclass{52B20, 52C07}

\newcommand{\R}{\ensuremath{\mathbb{R}}}
\newcommand{\Z}{\ensuremath{\mathbb{Z}}}
\newcommand{\F}{\ensuremath{\mathbb{F}}}

\newtheorem{theorem}{Theorem}[section]
\newtheorem{proposition}[theorem]{Proposition}
\newtheorem{corollary}[theorem]{Corollary}
\newtheorem{lemma}[theorem]{Lemma}
\newtheorem{problem}[theorem]{Problem}
\newtheorem*{conjecture}{Conjecture}
\newtheorem*{question}{Open Question}
\newtheorem{claim}{Claim}

\theoremstyle{definition}
\newtheorem{definition}[theorem]{Definition}

\theoremstyle{remark}
\newtheorem*{remark}{Remark}

\begin{abstract}
We prove the Voronoi conjecture for five-dimensional parallelohedra. Namely, we show that if a convex five-dimensional polytope $P$ tiles $\R^5$ with translations, then $P$ is an affine image of the Dirichlet-Voronoi polytope for a five-dimensional lattice.

Our proof is based on an exhaustive combinatorial analysis of possible dual 3-cells and incident dual 4-cells encoding local structures around two-dimensional faces of five-dimensional parallelohedron $P$ and their edges aiming to prove existence of a free direction for $P$ paired with new properties established for parallelohedra (in any dimension) that have a free direction that guarantee the Voronoi conjecture for $P$.
\end{abstract}

\maketitle

\tableofcontents

\section{Introduction}\label{sec:intro}

Tilings serve as source of numerous patterns for art objects as well as inspiration for mathematical notions such as symmetry groups associated with the whole tiling or with an underlying point set. Particularly, Hilbert's eighteenth problem \cite{Hil} dedicated to geometry and symmetries of lattices asks about finiteness of number of classes of space groups (discrete groups of isometries with compact fundamental region) and about existence of polytope that tiles the space with congruent copies without being a fundamental region of any space group, and about the densest (sphere) packing in the three-dimensional space. All parts of Hilbert's eighteenth problem have been solved. Bieberbach \cite{Bie1,Bie2} proved the finiteness of the family of space groups in $\R^d$ and Reinhardt \cite{Rei} constructed an example of a non-convex polytope in $\R^3$ that answers that part of Hilbert's eighteenth problem. The last part of the problems was resolved by Hales \cite{Hal,Hal2} who proved the Kepler conjecture on the densest sphere packing in $\R^3$.

However, even for small dimensions, it is complicated to give a complete classification of convex polytopes that can tile Euclidean space with congruent copies with or without restricting to fundamental regions of space groups. 

Without an attempt to make a complete survey, we mention a few results regarding such polygons and polytopes. For $\mathbb{R}^2$, a proof of completeness of the list of known convex pentagons that tile the plane was announced only recently by Rao \cite{Rao}. For three-dimensional space, even the maximal number of faces of a polytope that tiles $\R^3$ with congruent copies is unknown; the best known example has 38 faces and is due to Engel \cite{Eng38}, see also \cite{GC}. Also, there are polytopes in $\R^3$, for example the Schmitt-Conway-Danzer polytope \cite[Sect. 7.2]{Sen}, that tile the space only in aperiodic way assuming that each two tiles are congruent using a rigid motion symmetry while reflections are not allowed. When the tiles are not required to be polytopes or to be convex, even one tile may enforce aperiodic structure considering all isometries as in the case of the Socolar-Taylor tile \cite{ST} in $\R^2$ or the Hat or Spectre tiles by Smith, Myers, Kaplan, and Goodman-Strauss \cite{hat,spectre}, or translations only as was recently shown by Greenfeld and Tao \cite{GT} in the space of sufficiently large dimension. We refer to \cite{Hand} and references therein for a more comprehensive survey of the topic.

In this paper we restrict our attention to tilings of Euclidean space with convex polytopes where every two tiles are translation of each other, the tilings with {\it parallelohedra}. The systematic study of parallelohedra and their properties goes back to Fedorov \cite{Fed}, Minkowski \cite{Min}, Voronoi \cite{Vor}, and Delone (Delaunay)\footnote{Boris Delone (Delaunay), a Russian and Soviet mathematician of French descent. He used the French spelling Delaunay in earlier works and the transliteration of Russian spelling Delone in later works. We use the latter spelling in the paper but the actual spelling in the cited references might be different.} \cite{Del}. The seminal work of Voronoi \cite{Vor} together with another seminal work on geometry of numbers by Minkowski \cite{Min10} introduced parallelohedra to the study of quadratic forms from the geometric point of view leading to advances in questions of lattice packings, coverings, and numerous applications of lattices and associated reduction theories.

The study of parallelohedra and their properties is inherently connected with the study of lattices. A {\it lattice} in $\R^d$ is (a translation) of the set of all integer linear combinations of some basis of $\R^d$. For a fixed lattice $\Lambda$, its {\it Dirichlet-Voronoi polytope}, or just {\it Voronoi polytope} is the set of of points that are closer to a fixed point $x\in \Lambda$ than to any other point of $\Lambda$. It is obvious that the Voronoi polytopes of lattices are parallelohedra.

The geometric properties of lattices captured by the Dirichlet-Voronoi construction and its dual Delone construction \cite{Del34} are related to various questions on lattice packings and coverings for spheres as well as for other convex bodies; we refer to work of Sch\"urmann and Vallentin \cite{SV} on computational approaches to lattice sphere packings and coverings, review of Gruber \cite{Gru} on lattice packings and coverings with convex bodies, and breakthrough works of Viazovska \cite{Via} and Cohn, Kumar, Miller, Radchenko and Viazovska \cite{CKMRV} on densest sphere packings in dimensions 8 and 24 for more details and additional references.

General parallelohedra appear in the study of spectral sets in $d$-dimensional space. As it was recently shown by Lev and Matolcsi \cite{LM},  a convex body $\Omega\subset \R^d$ is a spectral set, i.e. if there is an orthogonal basis of exponential functions in $L^2(\Omega)$, if and only if $\Omega$ is a parallelohedron as it was conjectured by Fuglede \cite{Fug}. For non-convex sets the Fuglede conjecture was disproved by Tao \cite{Tao} in dimensions $d\geq 5$. We refer to \cite{LM} and references therein for more details on the status of both directions of the Fuglede conjecture for general sets.

One of the most intriguing and still open conjectures in parallelohedra theory is the Voronoi conjecture \cite{Vor} that connects the family of all $d$-dimensional parallelohedra with $d$-dimensional lattices and their Dirichlet-Voronoi cells. This conjecture can be formulated in rather simple terms and goes back to Voronoi's study of geometric theory of positive definite quadratic forms \cite{Vor}.

\begin{conjecture}[G.~Voronoi]
For every $d$-dimensional parallelohedron $P$ there exists a $d$-dimensional lattice $\Lambda$ and an affine transformation $\mathcal A$ such that $\mathcal A(P)$ is the Dirichlet-Voronoi polytope of $\Lambda$.
{\sloppy

}
\end{conjecture}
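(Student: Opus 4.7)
The plan is to produce, starting from the combinatorial-affine data of $P$, a positive definite quadratic form $Q$ on $\R^d$ whose Dirichlet--Voronoi polytope is affinely equivalent to $P$. The starting point is the Minkowski--Venkov characterization of parallelohedra: $P$ is centrally symmetric, all its facets are centrally symmetric, and every belt (the set of facets of $P$ containing some translate of a fixed codimension-$2$ face) has exactly $4$ or $6$ facets. Combined with the face-to-face property of the tiling $\mathcal T = P + \Lambda_P$, these facts provide a rigid combinatorial skeleton onto which we must graft the metric data encoded by $Q$.

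The metric data will be produced by Voronoi's canonical scaling. To each facet $F$ I would assign a positive scalar $s(F)$ subject to $s(F) = s(-F)$ and the following relation on each $6$-belt: the three pairs of opposite facet-pairing vectors span a common $2$-plane, and an appropriate signed sum of the scaled vectors must vanish around the belt. These are finitely many homogeneous linear conditions on the collection $\{s(F)\}$, and the task reduces to exhibiting a \emph{strictly positive} solution. Given such a scaling, the form $Q$ is reconstructed as the unique inner product making each facet-pairing vector $v_F$ have squared length proportional to $s(F)$, and with this $Q$ the polytope $P$ becomes affinely equivalent to the Voronoi cell of $\Lambda_P$.

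The main obstacle, and the reason the conjecture remains open in full generality, is proving existence and positivity of such a canonical scaling for an arbitrary $d$-dimensional parallelohedron. I would attack this as a global assembly problem on the dual complex of $\mathcal T$: propagate the weights across $6$-belts using the planar closing relation, then demand consistency around every codimension-$3$ face of $\mathcal T$. The link of such a face is a two-dimensional polytope in a lower-dimensional parallelohedral substructure, which suggests an induction on $d$ fed by the classical partial results as base cases --- Voronoi's theorem for primitive parallelohedra, Zhitomirskii's extension to parallelohedra primitive on $(d-2)$-skeleta, and Ordine's theorem on $3$-irreducible parallelohedra. The delicate work is the remaining non-generic codimension-$3$ configurations that none of these directly handles; the plan is to classify them combinatorially and, case by case, exhibit positive local scalings compatible with all closing relations in the star. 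Once the global scaling is in hand, the final verification that $Q$ realizes $P$ as a Voronoi cell is essentially bookkeeping: the closing relations guarantee that the perpendicular bisectors of the facet-pairing vectors produce exactly the facets of $P$ and nothing spurious, and the affine transformation $\mathcal A$ is the one taking the standard inner product of $\R^d$ to $Q$.
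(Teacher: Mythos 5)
The statement you are asked to prove is the Voronoi conjecture in full generality, and it is still open: the paper does not prove it, it only proves the case $d=5$ (Theorem \ref{thm:main}), with $d\leq 4$ known classically. Your proposal is therefore not, and does not claim to be, a proof. What you describe is the standard reduction of the conjecture to the existence of a strictly positive canonical scaling on $\mathcal T_P$ --- an equivalence already due to Voronoi and recalled in Section \ref{sec:def} of the paper --- followed by an explicit admission that ``the main obstacle \dots{} is proving existence and positivity of such a canonical scaling for an arbitrary $d$-dimensional parallelohedron.'' That obstacle \emph{is} the conjecture; restating it as an ``assembly problem on the dual complex'' does not advance it. The concrete gap is the step where you would ``classify combinatorially and, case by case, exhibit positive local scalings'' for the codimension-$3$ configurations not covered by Voronoi, Zhitomirskii, or Ordine. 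By Delone's classification these are exactly the faces whose dual $3$-cells are triangular prisms or cubes, and no propagation argument purely local to such a star is known to close the scaling relations consistently: the prism and cube cells are precisely where the gain function $\gamma$ can fail to be $1$ on cycles, which is why Ordine's hypothesis excludes them.

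It is also worth noting that the paper's actual resolution of these hard cases in dimension $5$ does not follow your plan at all. Rather than producing a local scaling, the paper shows (Sections \ref{sec:cube} through \ref{sec:py-py-py}) that a cubical or ``badly placed'' prismatic dual $3$-cell forces $P$ to have a \emph{free direction}, and then invokes Lemma \ref{lem:5-free}, which in turn rests on the complete computer classification of five-dimensional Dirichlet-Voronoi parallelohedra \cite{5dim} and the verification in \cite{DGM-arxiv}. Neither ingredient is available for general $d$ (Section \ref{sec:final} explains why), and the proposed induction on $d$ via links of codimension-$3$ faces has no base-case mechanism that survives beyond dimension $5$. In short: your outline correctly identifies the known framework and the known obstruction, but supplies no new idea to overcome it, so there is no proof to compare against the paper's --- only a proof of a special case exists there, by a substantially different and partly computational route.
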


Thus, the Voronoi conjecture claims that every convex polytope that can be considered as a fundamental region of a lattice as subgroup of $\R^d$, can be obtained by the Dirichlet-Voronoi construction, possibly followed by an affine transformation.

It is worth noting that in the works on packings and coverings mostly Dirichlet-Voronoi parallelohedra appear while the Fuglede conjecture and corresponding results concern general parallelohedra. The Voronoi conjecture essentially claims that every parallelohedron is a Dirichlet-Voronoi parallelohedron.

The Voronoi conjecture is proved in small dimensions $d\leq 4$. Two- and three-dimensional cases are usually treated as folklore and likely can be attributed to Voronoi as the lists of parallelohedra in these dimensions were known by the time the conjecture was formulated. For $d=2$, only parallelograms and centrally symmetric hexagons are parallelohedra, and three-dimensional parallelohedra were obtained by Fedorov \cite{Fed}, see Figure \ref{pict:fedorov}. Delone \cite{Del} proved the Voronoi conjecture in $\R^4$ while also providing a list of 51 four-dimensional parallelohedra which was completed by Stogrin \cite{Sto} who found the last 52nd four-dimensional parallelohedron.

\begin{center}
\begin{figure}[!ht]
\includegraphics[width=0.6\textwidth]{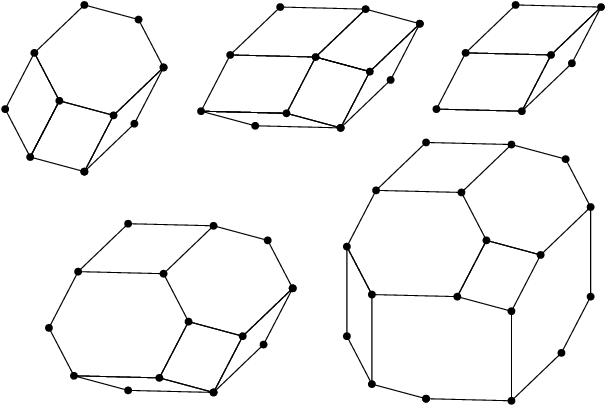}
\caption{Five three-dimensional parallelohedra: hexagonal prism, rhombic dodecahedron, parallelepiped, elongated dodecahedron, and truncated octahedron.}
\label{pict:fedorov}
\end{figure}
\end{center}

Another series of results on the Voronoi conjecture involves restrictions on local structure of face-to-face tilings by parallelohedra; this requirement originates from the classical approach to parallelohedra by Fedorov. Various types of combinatorial restrictions on the local structure around face of $P$ imply that $P$ satisfies Voronoi conjecture as shown by Voronoi \cite{Vor}, Zhitomirski \cite{Zhi}, and Ordine \cite{Ord}; we give more details on these results in Section \ref{sec:def}. We also would like to mention recent results of the author with Gavrilyuk and Magazinov \cite{GGM} and of Grishukhin \cite{Gri-cells} that prove the Voronoi conjecture for parallelohedra with global combinatorial properties.

Erdahl \cite{Erd} proved the Voronoi conjecture for parallelohedra that are zonotopes. This can be reformulated in terms of regularity for oriented matroids, see \cite{DG}, for example. We also refer to \cite[Section 3.2]{Hand} as another source of known results on the Voronoi conjecture.

The main result of this paper is a proof of the Voronoi conjecture for five-dimensional parallelohedra, Theorem \ref{thm:main}. This theorem also implies that the list of 110~244 five-dimensional Voronoi parallelohedra obtained in \cite{5dim}, i.e. the list of combinatorial classes of Voronoi polytopes of five-dimensional lattices, is the complete list of combinatorial types of parallelohedra in $\R^5$ which is summarized in Corollary \ref{cor:number}.

It should be mentioned that some sources refer to the paper of Engel \cite{Eng}, with additional computations described in \cite{Eng00}, for a proof of Voronoi conjecture in $\R^5$. The main result stated in \cite{Eng} for $\R^5$ claims that ``every parallelohedron in $\R^5$ is combinatorially equivalent to a Voronoi parallelohedron''. We have a strong doubt that this statement, and consequently the Voronoi conjecture in $\R^5$, has a rigorous justification in \cite{Eng} even if accompanied by computational results from \cite{Eng00}. In our opinion the methods used by Engel involve only operations of zone contraction and zone extension for Dirichlet-Voronoi parallelohedra of lattices represented using the cone of positive definite quadratic forms and studying faces of subcones that represent the same Delone tiling and there is no justification in \cite{Eng,Eng00} that these methods can be used to obtain all five-dimensional parallelohedra. The completeness of classification is crucial for Engel's conclusion. We give more details on our interpretations of Engel's contributions to the progress in Voronoi conjecture in papers \cite{Eng,Eng00} in Appendix~\ref{sec:appendix}.

The final judgment on the status of Engel's papers \cite{Eng,Eng00} and the results presented there is outside the scope of our work.

We would like to emphasize that our approach to the Voronoi conjecture is completely different from Engel's approach through classification of ``totally contracted parallelohedra'' that is claimed in \cite{Eng}. Instead, our approach is based on a careful analysis of local combinatorics of parallelohedral tilings and of global combinatorics of face lattices of five-dimensional parallelohedra without apriori assumption that any process gives any classification of parallelohedra in $\R^5$; see Section \ref{sec:main} for details.

The paper is organized as follows. In Section \ref{sec:def} we introduce definitions and main concepts and present key known properties of parallelohedra that are used in our proof. In Section \ref{sec:lemmas} we prove several lemmas that are crucial for our approach to five-dimensional parallelohedra. We pay special attention to combinatorics of local structure of parallelohedra tilings as this is the main tool that we use. 

In Section \ref{sec:main} we provide an outline for the proof of the Voronoi conjecture in $\R^5$ and in Sections \ref{sec:free} through \ref{sec:py-py-py} we provide all the details for the proof.

The last Section \ref{sec:final} is devoted to discussion on parallelohedra and the Voronoi conjecture in higher dimensions.

\section{Definitions and key properties}\label{sec:def}

In this section we give an overview of known properties of parallelohedra and dual cells that we need further. In most cases we state the properties for $d$-dimensional parallelohedra without restricting to the five-dimensional case.

\begin{definition}
A convex polytope $P$ in $\R^d$ is called a {\it parallelohedron} if $P$ tiles $\R^d$ with translated copies.
\end{definition}

In the classical setting, the tiling with translated copies of $P$ must be a face-to-face tiling, i.e. brickwall-like tilings are prohibited. However as it was shown later the face-to-face restriction is redundant. Particularly, a convex $d$-dimensional polytope $P$ is a parallelohedron if and only if $P$ satisfies the following {\it Minkowski-Venkov conditions}.
\begin{enumerate}
\item $P$ is centrally symmetric;
\item Each facet of $P$ is centrally symmetric;
\item Projection of $P$ along any of its face of codimension $2$ is a parallelogram or centrally symmetric hexagon.
\end{enumerate}
Minkowski \cite{Min} proved that every convex polytope that tiles $\R^d$ with translated copies in face-to-face manner satisfies first two conditions. Venkov \cite{Ven} proved that all three conditions are necessary and sufficient for a convex polytope $P$ to tile $\R^d$ with translated copies in face-to-face or non-face-to-face way; McMullen \cite{McM} (see also \cite{McM-priority}) obtained the results of Venkov independently. We also refer to the work of Groemer \cite{Gro1} for the necessity of the first two of Minkowski-Venkov conditions in some cases of packings, not necessarily face-to-face. The first two Minkowski-Venkov conditions are also necessary for coverings with constant multiplicity as shown by Gravin, Robins, and Shiryaev \cite{GRS}. We also note that the third Minkowski-Venkov condition plays a role in multi-layered tilings at least in small dimensions \cite{HSYZ}.

For a fixed parallelohedron $P$ there is a unique face-to-face tiling of $\R^d$ with translated copies of $P$ assuming one copy is centered at the origin and from now on we will consider only the case of this particular tiling. In that case the centers of the polytopes of the tiling form a $d$-dimensional lattice.

\begin{definition}\label{def:lattice}
We use the notations $\mathcal T_P$ and $\Lambda_P$ for the tiling and the lattice respectively assuming $P$ is centered at the origin. The lattice $\Lambda_P$ is called the {\it lattice associated with} $P$, or the {\it lattice of the tiling} $\mathcal T_P$.
 
The tiling $\mathcal T_P$ is preserved under translations by vectors from $\Lambda_P$ and by central symmetries in the points of $\frac 12\Lambda_P$ that preserve $\Lambda_P$.
\end{definition}

\subsection{Dual cells} \hfill 

In the course of our proof we mainly study local combinatorics of the tiling $\mathcal T_P$. The main tool we use is the dual cell technique; the method goes back to Delone \cite{Del} and the study of four-dimensional parallelohedra. The dual cell of a face $F$ of $\mathcal T_P$ encodes which copies of $P$ are incident to $F$.

\begin{definition}
Let $F$ be a non-empty face of $\mathcal T_P$. The {\it dual cell} $\mathcal D(F)$ of $F$ is the set of all centers of copies of $P$ in $\mathcal T_P$ that are incident to $F$, so $$\mathcal D(F):= \left\{ x\in\Lambda_P|F\subseteq (P+x) \right\}.$$
If $F$ is a face of codimension $k$, then we say that $\mathcal D(F)$ is a {\it dual cell of dimension $k$}, or {\it dual $k$-cell}.

If $F$ is a facet, then $\mathcal D(F)$ contains exactly two points and a segment connecting these two points is called a {\it facet vector}. Facet vectors correspond to pairs of copies of $P$ that share facets in $\mathcal T_P$.
\end{definition}

The collection of all dual cells associated with non-empty faces of $\mathcal T_P$ inherits a face lattice structure dual to the face lattice structure of the tiling $\mathcal T_P$. Namely, if a face $F$ is a subface of a face $F'$, then the cell $\mathcal D(F')$ is a subcell of the cell $\mathcal D(F)$. Hence the set of all dual cells form a cell complex that we denote $\mathcal C_P$.

In a specific case when $P$ is the Dirichlet-Voronoi cell for $\Lambda_P$, the dual cell of a face $F$ is (the vertex set of) a face of the Delone tesselation for $\Lambda_P$. Particularly, the dual cells of vertices of $\mathcal T_P$ are the Delone polytopes for $\Lambda_P$ and  these dual cells tile $\R^d$. Consequently, if the Voronoi conjecture is true for $P$, then dual cells are affine images of vertex sets of faces of Delone polytopes with inherited face lattice, so the dual cell should carry the structure of convex polytopes. In certain cases this structure can be established without prior assumption that $P$ satisfies the Voronoi conjecture.

\begin{definition}
Let $\mathcal D(F)$ be a dual $k$-cell. If the face lattice of $\mathcal D(F)$ within $\mathcal C_P$ coincides with the face lattice of the convex polytope $T:=\conv \mathcal D(F)$, then we say that $\mathcal D(F)$ is {\it combinatorially equivalent} to $T$, or just that $\mathcal D(F)$ is {\it combinatorially} $T$.

We note that this definition requires that $T$ is a $k$-dimensional polytope however this is not proved in general for every $P$ and every $k$.
\end{definition}

The theorem of Voronoi \cite{Vor} can be formulated in terms of dual $d$-cells.

\begin{theorem}[G.~Voronoi]
If all dual $d$-cells of $\mathcal T_P$ for $d$-dimensional $P$ are combinatorially $d$-simplices, then the Voronoi conjecture is true for $P$.
\end{theorem}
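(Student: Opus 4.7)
The plan is to produce a positive definite quadratic form $Q$ on $\R^d$ for which $P$ is the Dirichlet-Voronoi cell of $\Lambda_P$ in the metric defined by $Q$; any affine transformation $\mathcal A$ whose Gram matrix (in the standard basis) equals $Q$ then maps $P$ to a genuine Voronoi polytope, which is the content of the Voronoi conjecture. First I would observe that the hypothesis propagates downward: if every dual $d$-cell is combinatorially a $d$-simplex, then every dual $k$-cell for $k \le d$ is combinatorially a $k$-simplex, because every dual $k$-cell $\mathcal D(F)$ is a subcell of some dual $d$-cell $\mathcal D(v)$ (for $v$ any vertex of $F$), and subcells of a $d$-simplex are simplices. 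In particular every dual $2$-cell is a triangle, which forces every codimension-$2$ face $G$ of $\mathcal T_P$ to be incident to exactly three tiles and the projection of $P$ along $G$ to be a centrally symmetric hexagon. The three facet vectors $v_1,v_2,v_3$ of the facets meeting along $G$ are coplanar in the plane transverse to $G$ and satisfy a linear relation $\epsilon_1 v_1+\epsilon_2 v_2+\epsilon_3 v_3=0$ with $\epsilon_i\in\{\pm 1\}$.

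The construction of $Q$ proceeds through Voronoi's canonical scaling. Let $V$ denote the set of facet vectors modulo the involution $v\mapsto -v$. I would seek a function $s\colon V\to\R_{>0}$ such that at every hexagonal codimension-$2$ face $G$,
\[ \epsilon_1 s(v_1)v_1+\epsilon_2 s(v_2)v_2+\epsilon_3 s(v_3)v_3=0. \]
Fixing $s(v_0)=1$ at one facet vector $v_0$, one propagates $s$ to neighbouring facet vectors along hexagonal relations: a hexagonal relation in which two of the scalings are known determines the third. Well-definedness of this propagation requires that every closed cycle of hexagonal relations produces trivial monodromy. By a connectedness argument on the complex $\mathcal C_P$, it suffices to verify consistency around every dual $3$-cell. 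By the hypothesis (applied in codimension $3$) the dual $3$-cell is combinatorially a tetrahedron, whose six edges carry six facet vectors and whose four triangular facets impose four balance relations of the above form; a short linear-algebra computation shows that any three of these relations imply the fourth.

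Once $s$ is consistently defined I would verify it is everywhere positive. Starting from $s(v_0)=1>0$, the geometric arrangement of three outward facet vectors around any hexagonal face forces the signs $\epsilon_i$ so that the balance relation preserves positivity of $s$. With a positive canonical scaling in hand, the quadratic form $Q$ is then defined by demanding that each facet $f$ of $P$ with facet vector $v_f$ lie in the hyperplane $\{x\colon Q(x,v_f)=\tfrac12 Q(v_f,v_f)\}$, with the rescaling encoded by $s$ ensuring compatibility across adjacent facets. Symmetry and positive-definiteness of $Q$ follow from positivity of $s$ together with the fact that the facet vectors span $\R^d$. Applying any linear map $\mathcal A$ whose standard Gram matrix is $Q$ then carries $P$ to the Dirichlet-Voronoi cell of $\mathcal A(\Lambda_P)$.

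The main obstacle is the consistency check in the tetrahedral step: this is precisely where the simpliciality hypothesis is indispensable, and it reduces to the algebraic fact that the four triangular balance conditions on the six edges of a tetrahedron have rank three, leaving a one-dimensional space of consistent scalings as required. The remaining steps — global propagation, positivity, and the construction of $Q$ from $s$ — are comparatively routine, but they depend on the cellular connectedness of $\mathcal C_P$, which is guaranteed by $\mathcal T_P$ being a face-to-face tiling of $\R^d$.
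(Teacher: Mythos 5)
The paper does not prove this statement at all: it is Voronoi's classical theorem on primitive parallelohedra, stated with a citation to \cite{Vor}, so there is no in-paper argument to compare against. Your sketch follows exactly the route the paper describes in its discussion of canonical scaling (Definition \ref{def:scaling} and the remark that existence of a canonical scaling is equivalent to the Voronoi conjecture for $P$), and its skeleton --- simpliciality propagates down to dual $2$- and $3$-cells, a scaling is propagated along hexagonal relations, consistency is checked on tetrahedral dual $3$-cells, and the scaling is converted into a quadratic form --- is the correct classical outline.

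Two points are materially understated, though neither is a wrong turn. First, for well-definedness of the propagation you invoke ``a connectedness argument on $\mathcal C_P$'' and later ``cellular connectedness''; connectedness only guarantees that the scaling reaches every facet, not that it is single-valued. What you actually need is that the cycle space of the facet-adjacency structure is generated by the elementary cycles around codimension-$3$ faces, which rests on simple connectivity of the boundary sphere of a tile (for $d\geq 3$) rather than on connectedness, and this reduction deserves to be stated as such. Second, calling ``the construction of $Q$ from $s$'' comparatively routine inverts the actual weight of the argument: passing from a positive canonical scaling to an affine image of a Dirichlet-Voronoi cell, and in particular proving positive definiteness of the resulting form, is the substantial second half of Voronoi's memoir (the construction of the generatrix), not a formal consequence of positivity of $s$ and the facet vectors spanning $\R^d$. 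If you take that equivalence as a black box --- as this paper explicitly does --- your proof is fine; if you intend to prove it, that step is where most of the remaining work lives.
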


The Minkowski-Venkov conditions imply that there are only two types of dual $2$-cells. For a fixed face $F$ of codimension 2 of $P$, if a projection of $P$ along $F$ is a centrally symmetric hexagon, then the dual cell $\mathcal D(F)$ is combinatorially triangle, and if this projection is a parallelogram, then $\mathcal D(F)$ is combinatorially parallelogram, see Figure \ref{pict:2cells}.

\begin{center}
\begin{figure}[!ht]
\includegraphics[width=0.6\textwidth]{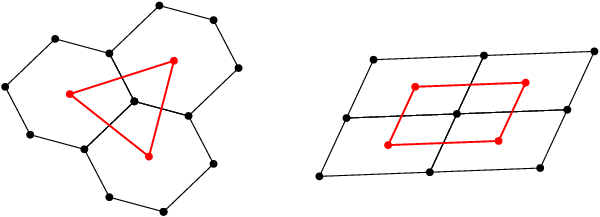}
\caption{Two types of dual 2-cells. Original parallelohedra are black polygons and dual cells are red.}
\label{pict:2cells}
\end{figure}
\end{center}

The theorem of Zhitomirski \cite{Zhi} can be stated in terms of dual cells as well.

\begin{theorem}[O.~Zhitmorski]
If all dual $2$-cells of $\mathcal T_P$ for $d$-dimensional $P$ are combinatorially triangles, then the Voronoi conjecture is true for $P$.
\end{theorem}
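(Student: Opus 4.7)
The plan is to reduce Zhitomirski's theorem to the preceding theorem of Voronoi by producing an affine transformation that takes $P$ to the Dirichlet-Voronoi polytope of $\Lambda_P$. The central device is the so-called \emph{canonical scaling} of facet vectors, which is available precisely when every dual $2$-cell is a triangle.

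The hypothesis rules out parallelogram dual $2$-cells, so by the Minkowski-Venkov classification of codimension-$2$ faces every codim-$2$ face $F$ of $\mathcal{T}_P$ has a hexagonal projection and exactly three tiles meet along $F$. This yields, at each such $F$, a single relation $s_1 e_1 + s_2 e_2 + s_3 e_3 = 0$ among the three facet vectors $e_1, e_2, e_3$ spanning $\mathcal{D}(F)$, with positive coefficients $s_i$ determined up to a common factor. I would then prove global consistency of these local data: using the connectedness of the graph whose vertices are facet vectors and whose edges are codim-$2$ faces, together with the fact that in the absence of parallelogram $2$-cells every elementary loop (coming from the boundary of a codim-$3$ cell) closes up, the local $s_i$ glue into a well-defined positive function $s$ on the set of facet vectors, unique up to an overall positive constant.

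With $s$ in hand I would construct a positive-definite quadratic form $Q$ on $\R^d$ characterized by the condition that for every facet vector $e$ the corresponding facet of $P$ lies on the $Q$-perpendicular bisector of $0$ and $e$, with the $Q$-length of $e$ proportional to $s(e)$. A direct check using the Minkowski-Venkov conditions then shows that the halfspace inequalities defining $P$ as a cell of $\mathcal{T}_P$ coincide with those defining the Voronoi polytope of $\Lambda_P$ in the Euclidean structure given by $Q$, so $P$ is that Voronoi polytope up to the linear change of coordinates identifying $Q$ with the standard inner product, and this change of coordinates supplies the affine map $\mathcal{A}$ demanded by the conjecture.

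The main obstacle is the global consistency of the canonical scaling. Locally the relation at each codim-$2$ face determines the $s_i$ up to a positive scalar, but transporting these scalars along sequences of adjacent codim-$2$ faces must produce no monodromy. The triangularity hypothesis is used precisely here: parallelogram dual $2$-cells would impose further independent relations not obviously compatible with the single-factor consistency, whereas for triangular $2$-cells the relations are rank-one and close up around every codim-$3$ cell. Once $s$ exists, the construction of $Q$ and the verification that $P$ is its Voronoi polytope reduce to routine checks from the definitions together with the Minkowski-Venkov conditions.
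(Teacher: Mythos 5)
The paper does not prove this statement itself; it cites Zhitomirski \cite{Zhi} and, in the surrounding discussion, indicates that the proof goes through the existence of a canonical scaling together with Voronoi's equivalence (canonical scaling exists iff $P$ is affinely Voronoi). Your plan follows exactly that classical route, so the architecture is right. Two issues, one minor and one substantive. The minor one: the relation at a codimension-$2$ face with hexagonal projection is a linear dependence among the three facet \emph{normals} (which span the $2$-plane orthogonal to the face), not among the three facet vectors spanning $\mathcal D(F)$; the latter are the edges of the dual triangle and sum to zero with coefficients $(1,1,1)$, which carries no information. As written, your local data would be trivial. Presumably you mean normals, but the distinction matters because the whole scaling lives on the normal fan.

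The substantive gap is the global consistency step, which is the entire content of the theorem and which you assert rather than prove. Saying the relations are ``rank-one and close up around every codim-$3$ cell'' is not an argument. Two things need proofs: (i) that an arbitrary cycle in the facet-adjacency graph can be contracted to elementary cycles around codimension-$3$ faces (this requires an argument about the topology of the tiling, e.g.\ Voronoi's generic-path argument or the simply-connectedness of the relevant surface as in \cite{GGM}); and (ii) that the gain around each codimension-$3$ face is $1$. For (ii) the triangularity hypothesis enters through Delone's classification: the only dual $3$-cells all of whose $2$-faces are triangles are the tetrahedron and the octahedron, and for each one must verify by a concrete rank/linear-algebra computation on the six (resp.\ twelve) facet normals, using the central symmetry of the octahedral cell, that the local scalings around its triangular $2$-faces are simultaneously satisfiable. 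Without these verifications the function $s$ is not shown to exist, and the subsequent construction of the quadratic form $Q$ (which is Voronoi's half of the equivalence and can legitimately be quoted) has nothing to feed on.
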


The complete list of dual 3-cells is also known. It was established by Delone \cite{Del} (see also \cite{Mag3cells}) as an intermediate step for his proof of the Voronoi conjecture in $\R^4$.

\begin{theorem}
If $F$ is a codimension $3$ face of $d$-dimensional parallelohedron $P$, then $\mathcal{D}(F)$ is combinatorially equivalent to one of next five $3$-dimensional polytopes.
\begin{itemize}
\item Tetrahedron;
\item Octahedron;
\item Pyramid over parallelogram;
\item Triangular prism;
\item Cube.
\end{itemize}
\end{theorem}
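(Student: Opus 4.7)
The plan is to reduce the classification to the three-dimensional case via a projection argument and then appeal to Fedorov's classification of $3$-dimensional parallelohedra, followed by a case-by-case enumeration. Let $F$ be a codimension-$3$ face of $\mathcal T_P$, let $L=\lin(F-F)$ be the $(d-3)$-dimensional subspace parallel to $F$, and let $\pi\colon\R^d\to\R^d/L\cong\R^3$ be the quotient projection. The standard consequence of the Minkowski--Venkov conditions that I would first establish is that $\pi(\mathcal T_P)$ is a face-to-face tiling of $\R^3$ by the $3$-dimensional parallelohedron $P':=\pi(P)$, and that $\pi(F)$ is a single vertex $v$ of $\mathcal T_{P'}$. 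Moreover, a translate $P+x$ contains $F$ if and only if $P'+\pi(x)$ contains $v$, so $\pi$ induces a bijection $\mathcal D(F)\to\mathcal D(v)$ that preserves the face-lattice structure inherited from $\mathcal C_P$ and $\mathcal C_{P'}$. Consequently it is enough to classify the dual cells of vertices in tilings by $3$-dimensional parallelohedra.

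By Fedorov's theorem, $P'$ is one of the five polytopes shown in Figure \ref{pict:fedorov}, and for each I would enumerate directly the combinatorial types of $\mathcal D(v)$ that occur at vertices of $\mathcal T_{P'}$. In the parallelepiped tiling, every vertex is incident to exactly $8$ tiles whose centers form a combinatorial cube. In the truncated octahedron tiling the parallelohedron is primitive, so every vertex dual cell is a tetrahedron. In the rhombic dodecahedron tiling the vertices split into two orbits under $\Lambda_{P'}$, producing tetrahedra and octahedra. Finally, the hexagonal prism and elongated dodecahedron tilings yield, in addition to previously seen types, pyramids over parallelograms (at vertices where two ``prism-columns'' meet end-to-end) and triangular prisms (at vertices in the hexagonal cross-sections). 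Taking the union over Fedorov's five types reproduces exactly the list stated in the theorem.

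The main obstacle is the justification of the projection step: that $\pi(\mathcal T_P)$ is indeed a face-to-face tiling by the $3$-dimensional parallelohedron $P'$, and that the subcomplex of $\mathcal C_P$ supported on $\mathcal D(F)$ is carried isomorphically onto the subcomplex of $\mathcal C_{P'}$ supported on $\mathcal D(v)$. I would prove this inductively on codimension: for a face $G$ of codimension $k$, the projection of $\mathcal T_P$ along $\lin(G-G)$ is a face-to-face tiling by a $k$-dimensional parallelohedron, with the key ingredient being the hexagon/parallelogram condition applied to each belt of facets meeting $G$; this guarantees both that $\pi(P)$ satisfies the Minkowski--Venkov conditions and that incidences above $F$ are preserved. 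Once this reduction is in place, the three-dimensional enumeration is an entirely concrete inspection of each of Fedorov's five solids.
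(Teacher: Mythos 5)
The paper does not actually prove this statement: it is quoted from Delone \cite{Del} (see also \cite{Mag3cells}), so your proposal can only be compared with the standard argument rather than with a proof contained in the text.

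There is a genuine gap precisely at the step you flag as the main obstacle, and the inductive argument you sketch does not close it. It is not true in general that the projection $\pi$ of the whole tiling $\mathcal T_P$ along $\lin(F-F)$ is a tiling of $\R^3$, nor that $\pi(P)$ is a $3$-dimensional parallelohedron. The Minkowski--Venkov conditions control the projection of a \emph{single tile} along a codimension-$2$ face and the local arrangement of the $3$ or $4$ tiles around such a face; they give no global injectivity of the projection. Unless $\lin(F-F)$ contains a free direction (in which case Venkov's projection theorem \cite{Ven-width} applies, and even then only to a single lattice layer), tiles that are separated essentially along $\lin(F-F)$ project onto overlapping sets, so $\pi(\mathcal T_P)$ covers $\R^3$ with infinite multiplicity. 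Consequently there is no $3$-dimensional parallelohedral tiling to which Fedorov's classification could be applied, and your proposed face-lattice isomorphism $\mathcal D(F)\to\mathcal D(v)$ has no well-defined target. What is actually true, and what the real proof uses, is much weaker: only the \emph{star} of $F$ (the tiles, facets and codimension-$2$ faces containing $F$) projects to a complete polyhedral fan in $\R^3$, whose maximal cones are indexed by $\mathcal D(F)$ and whose rays are each surrounded by $3$ or $4$ maximal cones --- this is where the hexagon/parallelogram condition enters. The cones of that fan are not translates of one polytope, so the classification proceeds not via Fedorov but by a direct combinatorial analysis: one studies the induced cell decomposition of the $2$-sphere (all vertices of degree $3$ or $4$), and combines Euler's formula with the parity constraint of Lemma \ref{lem:parity} and the central symmetry of contact faces to arrive at the five types. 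Your concluding three-dimensional enumeration is a correct description of the answer (these are exactly the $3$-dimensional lattice Delone polytopes), but it cannot be reached by the projection route you propose.
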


The first three types of dual 3-cells above exhibit a ``connectivity'' property in the following sense. Each pair of edges within one dual 3-cell of that type can be connected by a path of triangular dual 2-cells. This property was exploited by Ordine \cite{Ord} (see also \cite{Ord-arxiv}) in the following theorem.

\begin{theorem}[A.~Ordine]
If all dual $3$-cells of $\mathcal T_P$ for $d$-dimensional $P$ are combinatorially tetrahedra, octahedra, or pyramids over parallelograms, then the Voronoi conjecture is true for $P$.
\end{theorem}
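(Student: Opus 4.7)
The plan is to apply the classical canonical scaling approach. Recall that $P$ is affinely equivalent to a Dirichlet--Voronoi cell of some lattice if and only if one can realize $\Lambda_P$ as a lattice in a Euclidean structure $Q$ for which each facet of $P$ lies on the perpendicular bisector of its facet vector. The data of $Q$, restricted to facet vectors, amounts to a positive ``canonical scaling'' $n(\vec t) = n(-\vec t)$ together with certain cross-terms; each hexagonal (triangular) dual $2$-cell imposes a linear compatibility, while parallelogram dual $2$-cells impose no further condition since opposite edges of such a cell are the same unsigned facet vector. Thus Ordine's theorem reduces to producing a consistent canonical scaling.

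\textbf{Step 1: Propagation.} I would fix one facet vector $\vec t_0$ and an initial value of the scaling there, and extend by using the fact that at any triangular dual $2$-cell, any two of the three local scaling data determine the third. Model this on a graph $G$ whose vertices are unsigned facet vectors and whose edges are triangular dual $2$-cells. Connectivity of $G$ follows from the Minkowski--Venkov conditions, so existence of a canonical scaling reduces to verifying that propagation closes up along every cycle of $G$.

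\textbf{Step 2: Localization to $3$-cells.} By a standard chain-complex argument in $\mathcal C_P$, every $1$-cycle in $G$ is a $\mathbb{Z}$-boundary in the $2$-complex built from triangular dual $2$-cells and dual $3$-cells, so it suffices to check local consistency around each dual $3$-cell. By the classification of dual $3$-cells recalled above and the hypothesis, every dual $3$-cell is combinatorially a tetrahedron, an octahedron, or a pyramid over a parallelogram. For a tetrahedron, all four $2$-faces are triangles and the four resulting triangle relations on the six edge vectors are automatically compatible, being the image of a $2$-boundary. For an octahedron, the eight triangular $2$-faces give relations that pair up on opposite faces, reducing to four independent constraints on twelve edge vectors, and direct linear algebra verifies closure.

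\textbf{Step 3: The essential case.} The main obstacle is the pyramid over a parallelogram, because its base is a non-triangular dual $2$-cell and so contributes no relation; only the four triangular side-faces are available. Here I would invoke the combinatorial connectivity property noted in the excerpt: every pair of edges of the pyramid is joined by a path of triangular $2$-faces. Concretely, label the four slant (apex-to-base) facet vectors $\vec a_1,\ldots,\vec a_4$ and the two distinct base facet vectors $\vec b_1, \vec b_2$ (with opposite base edges carrying the same unsigned vector). Each triangular side face gives a relation of the form $\vec a_{i+1} - \vec a_i = \pm \vec b_j$, and the task is to check that the induced system on the scaling data closes up exactly when opposite base edges receive equal values, which is forced by the central symmetry of the parallelogram base. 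This short but delicate algebraic verification is the technical heart of Ordine's argument and the main difficulty I expect to encounter. The excluded combinatorial types (triangular prism, cube) fail precisely because their triangular $2$-faces, if any, do not connect every pair of edges, leaving room for a nontrivial holonomy that obstructs canonical scaling. Once local consistency holds at every dual $3$-cell, the local propagations glue into a globally defined canonical scaling, and the classical Voronoi criterion then yields the conjecture for $P$.
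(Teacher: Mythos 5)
First, note that the paper does not prove this theorem itself: it is quoted from Ordine's thesis \cite{Ord} (see also \cite{Ord-arxiv}), so your proposal must be measured against Ordine's actual argument and against the machinery the paper builds in Section \ref{sec:py-py-py}, which reproduces the relevant part of it. Your overall framework (canonical scaling, propagation along triangular dual $2$-cells, verification on cycles) is the right one, and your Step 3 correctly identifies that inside a single pyramid over a parallelogram the four triangular side faces let one route around the apex and that the parallelogram base only forces equality on parallel pairs of facets.

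The genuine gap is in Step 2, the claim that consistency localizes to individual dual $3$-cells by ``a standard chain-complex argument.'' That localization works for Voronoi's and Zhitomirski's theorems because there every codimension-$2$ face carries a full relation among its three incident facets. Here a parallelogram dual $2$-cell $KLMN$ relates $s(F(KL))$ to $s(F(MN))$ but gives \emph{no} relation between $s(F(KL))$ and $s(F(LM))$; the gain ``across'' the parallelogram between non-parallel facets is only defined by routing through the apex $O$ of some pyramidal dual $3$-cell $OKLMN$ containing it (this is exactly Definition \ref{def:gain_new} in the paper). A given parallelogram dual $2$-cell is a subcell of many dual $3$-cells, hence may admit several such apexes, and nothing in your argument shows that different choices of $O$ yield the same gain. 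This \emph{coherence} of parallelogram $2$-cells (Lemma \ref{lem:all_coherent} reduces the theorem to it) is the technical heart of Ordine's proof: it cannot be checked within a single dual $3$-cell but forces an analysis of dual $4$-cells, and one of the resulting configurations (Case 4 of \cite[Subsection 7.6]{Ord}, a family of pairwise vertex-sharing incoherent parallelograms) is excluded only by a computer-assisted enumeration with {\tt PORTA}. Your Step 3 verifies closure inside one pyramid and then asserts that ``the local propagations glue,'' which silently assumes precisely the coherence that needs to be proved. Until that is supplied, the proposed proof does not go through.
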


The complete list of dual $k$-cells for $k>3$ is not known however we expect that this list coincides with the list of lattice Delone polytopes of dimension $k$ which are known for dimension $k\leq 6$, see \cite{Dut} for details.

\begin{definition}
A non-empty face $F$ of $\mathcal T_P$ is called a {\it contact face} if $F$ is an intersection of two copies of $P$ within $\mathcal T_P$. So for some $x,y\in \Lambda_P$ 
$$F=(P+x)\cap (P+y).$$
\end{definition}

%
%

In that case the face $F$ and its dual cell $\mathcal D(F)$ are centrally symmetric with respect to $\frac{x+y}{2}\in \frac 12\Lambda_P$ as this central symmetry preserves $\mathcal T_P$. The point $\frac{x+y}{2}$ is one of the points of the shrinked lattice $\frac12\Lambda_P$ that represents all half-lattice points. The half-lattice points are in bijection with contact faces of $\mathcal T_P$. A point $z\in \frac12\Lambda_P$ is in the relative interior of a unique face $F$ of $\mathcal T_P$. The central symmetry in $z$ preserves $\mathcal T_P$ and hence it preserves the dual cell of $F$ and $F$ itself. In that case $F$ is the intersection of any two copies of $\mathcal T_P$ centered in opposite points of $\mathcal D(F)$ as this intersection is also preserved by the central symmetry in $z$ and there is only one face of $\mathcal T_P$ that can be preserved by the central symmetry. 

Moreover, the central symmetry of dual cells is a signature property of contact faces. If $\mathcal D(F)$ is centrally symmetric, then $F$ is a contact face and centers of $F$ and $\mathcal D(F)$ coincide as it follows from the previous paragraph considering $z$ as the center of $\mathcal D(F)$. In particular, all facets of $\mathcal T_P$ are contact faces with dual cells being combinatorially segments. Among dual 2- and dual 3-cells, those combinatorially equivalent to parallelograms and to octahedra and parallelepipeds, respectively, are dual cells of contact faces while others are not.


In addition to Euclidean space $\R^d$ and the lattice $\Lambda_P$ we use two additional (finite) vector spaces. Namely, the {\it space of parity classes} $\Lambda_p:=\Lambda_P/(2\Lambda_P)$ and the {\it space of half-lattice points} $\Lambda_{1/2}:=\left(\frac 12\Lambda_P\right)/\Lambda_P$. For a point $x\in\Lambda_P$ we call the coset $x+\Lambda_P/(2\Lambda_P)$ the {\it parity class} of $x$.

As vector spaces both $\Lambda_p$ and $\Lambda_{1/2}$ are isomorphic to $\mathbb F_2^d$, a $d$-dimensional vector space over two-element field $\mathbb{F}_2$, but they serve quite different roles. The space of parity classes gives us all possible options for various points in exhaustive approaches throughout Sections \ref{sec:free} through \ref{sec:py-py-py} and the space of half-lattice points is used to extract combinatorics of dual cell complex $\mathcal{C}_P$ and contact faces in particular. 

We use notations $[x_1,x_2,\ldots,x_d]$ for elements of $\Lambda_p$ and $\langle x_1,x_2,\ldots,x_d\rangle$ for elements of $\Lambda_{1/2}$ in coordinate representation.

The following lemma is a classical result in parallelohedra theory, see \cite{DS} for example. 

\begin{lemma}\label{lem:parity}
If $F$ is a face of $\mathcal T_P$, then $\mathcal D(F)$ contains at most one representative from each parity class.
\end{lemma}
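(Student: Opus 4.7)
The plan is to argue by contradiction. Suppose $x, y \in \mathcal D(F)$ are distinct with $x \equiv y \pmod{2\Lambda_P}$. Then $z := (x+y)/2$ lies in $\Lambda_P$, and I will derive a contradiction from the tiling's symmetries together with the half-lattice / contact-face bijection just recalled before the lemma.

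I would first study the face $G := (P+x) \cap (P+y)$. Since $\mathcal T_P$ is face-to-face and $x \neq y$, $G$ is a common face of two distinct tiles, so $\dim G < d$; moreover $F \subseteq G$, so $G$ is non-empty. The central symmetry $\sigma$ through the point $z$ preserves $\mathcal T_P$ (because $z \in \Lambda_P \subseteq \tfrac12 \Lambda_P$), and, using $P = -P$ together with $2z - x = y$, it swaps $P+x$ with $P+y$. Consequently $\sigma(G) = G$, so $G$ is centrally symmetric with center $z$. In particular $z$ lies in the relative interior of the (non-degenerate centrally symmetric) polytope $G$.

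Now I would invoke the bijection between $\tfrac12 \Lambda_P$ and the contact faces of $\mathcal T_P$: each half-lattice point lies in the relative interior of a unique face of the tiling. On one hand, $z$ is the center of the full-dimensional tile $P+z$ and therefore lies in its topological interior. On the other hand, we just saw $z$ lies in the relative interior of $G$. The uniqueness in the bijection forces $G = P+z$, contradicting $\dim G < d = \dim(P+z)$.

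The conceptual heart of the argument, and the only real obstacle, is spotting that the parity hypothesis $x \equiv y \pmod{2\Lambda_P}$ is precisely what makes the center of symmetry of the contact face $(P+x)\cap(P+y)$ land on an honest lattice point, and then turning this geometric observation into a clash with the half-lattice uniqueness statement. Beyond that, everything is a brief symmetry computation; no case analysis on the combinatorial type of $F$ or of $G$ is needed.
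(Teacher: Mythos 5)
Your proof is correct and follows essentially the same route as the paper's: the parity hypothesis makes the midpoint $\frac{x+y}{2}$ a lattice point, the central symmetry about it preserves $(P+x)\cap(P+y)$ and forces the midpoint into that intersection, which clashes with the midpoint being interior to the tile $P+\frac{x+y}{2}$. Your finish via relative interiors and uniqueness of the face containing a point is a slightly more formal packaging of the paper's direct observation that an interior point of one tile cannot lie in another, but the argument is the same.
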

\begin{remark}
The following proof of this lemma is using a combinatorial approach that is later used to study other properties of dual cells in Section \ref{sec:lemmas} and throughout our proof of the main result.

Suppose $x$ and $y$ belong to the same parity class and $x,y\in \mathcal D(F)$. The polytopes $P+x$ and $P+y$ have non-empty intersection, so they both must contain the midpoint $\frac{x+y}{2}$ of $xy$ because the central symmetry with respect to $\frac{x+y}{2}$ preserves this intersection. However $\frac{x+y}{2}\in \Lambda_P$ and is an internal point of another copy of $P$ which is impossible.
\end{remark}

\subsection{Canonical scaling} \hfill 

One of the most used approaches to prove the Voronoi conjecture for a class of parallelohedra involves a proof of existence of canonical scaling for polytopes within that class; we also refer to \cite{DGequiv} for other conditions that imply the Voronoi conjecture for a particular family of parallelohedra. The approach was first used by Voronoi \cite{Vor} for primitive parallelohedra. We use this approach in Sections \ref{sec:free} and \ref{sec:py-py-py}.

\begin{definition}\label{def:scaling}
Let $\mathcal{T}_P^{d-1}$ be the set of all facets of $\mathcal T_P$. For every facet $F$ let $n_F$ be one of its two unit normals. Let $s: \mathcal{T}_P^{d-1}\longrightarrow \R_+$ be defined on every facet of $\mathcal T_P$. For every codimension 2 face $X$ of the tiling $\mathcal T_P$ we look at the following equality of vectors.
\begin{itemize}
\item In case $\mathcal D(X)$ is combinatorially triangle, equivalently $X$ is non-contact, then there are exactly three facets $F$, $G$, and $H$ incident to $X$. We require that for a certain choice of signs 
$$\pm s(F)n_F\pm s(G)n_G\pm s(H)n_H=0.$$ 
\item In case $\mathcal D(X)$ is combinatorially parallelogram, equivalently $X$ is contact, then there are exactly four facets $F$, $G$, $H$, and $I$ incident to $X$. We require that for a certain choice of signs 
$$\pm s(F)n_F\pm s(G)n_G\pm s(H)n_H\pm s(I)n_I=0.$$ 
\end{itemize}
If these equalities can be satisfied for $s$ for every codimension 2 face $X$ of $\mathcal T_P$, then $s$ is called a {\it canonical scaling} for $P$ (or for $\mathcal T_P$).
\end{definition}

Effectively, the first condition dictates that the values of canonical scaling on three facets with common non-contact face of codimension 2 are proportional to absolute values of coefficients of their normals in the corresponding linear dependence; this linear dependence is essentially unique for the three linearly dependent normal vectors in two-dimensional subspace orthogonal to $F\cap G\cap H$. The second condition only says that values of canonical scaling on two opposite facets at a contact face of codimension 2 are equal. However it could be strengthened to equality of canonical scaling for every pair of parallel facets.

As it was shown by Voronoi \cite{Vor}, a parallelohedron $P$ satisfies the Voronoi conjecture if and only if $\mathcal T_P$ exhibits a canonical scaling. This equivalence was used by Voronoi \cite{Vor}, Zhitomirski \cite{Zhi}, and Ordine \cite{Ord} to prove their theorems on Voronoi conjecture for respective classes of parallelohedra.

The first condition for canonical scaling can be transformed into the following notion.

\begin{definition}\label{def:gain}
Suppose $F$ is a face of codimension 2 with triangular dual cell $\mathcal D(F)=ABC$. Each edge of $ABC$ is a dual cell of a facet of $\mathcal T_P$; we denote normals of these facets as $n_{AB}$, $n_{BC}$ and $n_{CA}$. There is a unique (up to non-zero factor) linear dependence $$\alpha_{AB}n_{AB}+\alpha_{BC}n_{BC}+\alpha_{CA}n_{CA}=0$$ for these normals with non-zero coefficients $\alpha_{AB}, \alpha_{BC}$, and $\alpha_{CA}$.

For a pair of facet vectors $AB$ and $AC$ that are incident to one triangular dual cell $ABC$ we define the {\it gain function} $\gamma(AB,AC)$ as 
$$\gamma(AB,AC):=\frac{|\alpha_{AC}|}{|\alpha_{AB}|}.$$
This notion is naturally extended to a sequence of facet vectors $f_1,\ldots,f_m$ where each two consecutive facet vectors belong to one triangular dual cell as
$$\gamma(f_1,\ldots,f_m):=\gamma(f_1,f_2)\cdot\ldots\cdot \gamma(f_{m-1},f_m).$$
In other words, the gain function on a sequence of facet vectors tracks how values of a hypothetical canonical scaling would change if we sequentially enforce the conditions from Definition \ref{def:scaling} for each consecutive pair of facet vectors.
\end{definition}

As it was shown by Garber, Gavrilyuk and Magazinov in \cite{GGM}, a canonical scaling for $P$ exists if and only if the gain function $\gamma$ is 1 on every appropriate cycle within $\mathcal T_P$. We will use this property in Section \ref{sec:py-py-py}.

\subsection{Free directions} \label{sec:freedir} \hfill 

Free directions can be used to ``extend'' or ``contract'' parallelohedra while generally preserving combinatorial structure of the tilings. These directions were used by Delone \cite{Del} to establish a certain layered structure of tilings by parallelohedra.

\begin{definition}
Let $P$ be a parallelohedron and let $v$ be a non-zero vector. We say that $P$ is {\it free} in the direction of $v$ if there exists a segment $I$ parallel to $v$ such that the Minkowski sum $P + I$ is a parallelohedron of the same dimension as $P$. We say that the direction of $v$ is a {\it free direction} for $P$ as well as every non-zero segment parallel to $v$ is a {\it free direction} for $P$.
\end{definition}

\begin{remark}
If $P + I$ is a parallelohedron then the sum $P + I'$ is a parallelohedron for any segment $I'$ parallel to $I$. Indeed, the combinatorics of $P + I$ and $P + I'$ is the same and the Minkowski-Venkov conditions for these polytopes can be satisfied only simultaneously.
\end{remark}

Free directions of general parallelohedra and their relation to the Voronoi conjecture are relatively well studied, we refer to papers of Magazinov \cite{Mag}, Horv\'ath \cite{Hor}, Grishukhin \cite{Gri} and references therein.

The following criterion can be used to determine whether the direction of segment $I$ is a free direction for $P$. It was initially stated by Grishukhin \cite{Gri} but a complete proof was given only in \cite{DGM} by Dutour Sikiri\'{c}, Grishukhin and Magazinov.

\begin{lemma}\label{lem:free}
A non-zero vector $v$ spans a free direction for $P$ if and only if every triangular dual cell $xyz = \mathcal D(G)$, where $G$ is a non-contact $(d - 2)$-face of $\mathcal T_P$, satisfies the following condition. If $F(xy)$ is a $(d - 1)$-face of $\mathcal T_P$ such that $\mathcal D(F(xy)) = xy$, and a similar definition applies to $F(xz)$ and $F(yz)$, then at least one of the faces $F(xy)$, $F(xz)$ and $F(yz)$ is parallel to $v$.
\end{lemma}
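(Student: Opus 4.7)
My plan is to reduce the lemma to a verification of the Minkowski--Venkov conditions for $P + I$. By Venkov's theorem the direction $v$ is free if and only if $P + I$ is centrally symmetric, has centrally symmetric facets, and has parallelogram or hexagonal projections along all its codimension-$2$ faces; the first two conditions are automatic because $P$ and $I$ are both centrally symmetric and because the facets of $P + I$ are Minkowski sums of centrally symmetric pieces (a facet $F$ of $P$ parallel to $v$ contributes $F + I$; a facet not parallel to $v$ contributes a translate of $F$ by $\pm v/2$; and certain codimension-$2$ faces $G$ of $P$ with $v \notin \lin G$ sweep out prism-type facets $G + I$). So the lemma is really a statement about the projection condition.

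For the forward direction, suppose $P + I$ is a parallelohedron and let $G$ be a non-contact $(d-2)$-face with triangular dual cell $xyz$. The quotient map $\pi\colon \R^d \to \R^d/\lin G \cong \R^2$ sends the tiling $\mathcal T_P$ to a planar tiling by translates of the hexagon $H = \pi(P)$, whose three pairs of parallel edges are $\pi(F(xy))$, $\pi(F(xz))$, $\pi(F(yz))$. The same projection sends $\mathcal T_{P+I}$ to a planar tiling by translates of $H + \pi(I)$, which must therefore be a parallelogram or centrally symmetric hexagon. Adding a segment to a hexagon preserves the combinatorial type exactly when the segment is zero or parallel to one of the three edge directions. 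Thus $\pi(v)$ is parallel to one of $\pi(F(xy))$, $\pi(F(xz))$, $\pi(F(yz))$, equivalently $v$ is parallel to one of $F(xy)$, $F(xz)$, $F(yz)$.

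For the reverse direction, assume the condition holds for every non-contact triangular dual $2$-cell. I would classify the codimension-$2$ faces of $P+I$: each is either (a)~a translate $G \pm v/2$ of a codimension-$2$ face $G$ of $P$ (case $v \notin \lin G$), (b)~a codimension-$2$ face $G$ of $P$ with $v \in \lin G$ (unchanged), or (c)~a prism $F + I$ where $F$ is a codimension-$3$ face of $P$ admitting a normal direction perpendicular to $v$ in the relative interior of its normal cone. In cases (a) and (b) the projection along the codimension-$2$ face of $P+I$ is either $\pi(P) = H$ itself (if $v \in \lin G$), or $H + \pi(I)$; both are parallelograms or hexagons, the latter precisely by the hypothesis (with parallelogram dual cells automatic since adding a segment to a parallelogram yields a parallelogram or hexagon).

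The main obstacle is case (c). Here the linear span of the codimension-$2$ face $F + I$ is $\lin F + \R v$, not a direction that is intrinsic to $\mathcal T_P$, so the projection $\pi(P + I) = \pi(P)$ is a projection of $P$ along a ``hybrid'' subspace whose hexagonal or parallelogrammatic behaviour must be deduced rather than assumed. My plan is to argue that the codimension-$3$ face $F$ has a dual $3$-cell of one of the five types in Delone's classification and that the existence of a facet normal in $N(F)$ orthogonal to $v$ forces, via the hypothesis on the triangular subcells of $\mathcal D(F)$, that enough facets through $F$ be parallel to $v$ so that the 3-dimensional projection of $P$ along $\lin F$ degenerates, after the additional projection along $v$, into exactly a parallelogram or hexagon. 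The case-by-case verification for the five Delone types is where the real work lies, and it is essentially this computation (carried out in \cite{DGM}) that completes the proof.
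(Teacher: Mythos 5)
The paper does not actually prove Lemma~\ref{lem:free}: it attributes the statement to Grishukhin and explicitly defers the complete proof to \cite{DGM}. So your attempt can only be judged on its own terms, and as it stands it is an outline rather than a proof. The overall architecture is the right one (verify the Minkowski--Venkov conditions for $P+I$ after classifying its codimension-$2$ faces via the common refinement of the normal fans of $P$ and $I$), and you correctly isolate case~(c) as the crux; but you then leave exactly that case to \cite{DGM}, which is the same step the paper itself outsources. Two further points need repair. First, your claim that the first two Minkowski--Venkov conditions are ``automatic'' is false: when $v^\perp$ cuts through the relative interior of the normal cone of a codimension-$2$ face $G$ of $P$, the polytope $P+I$ acquires a \emph{new facet} $G+I$, and this facet is centrally symmetric only if $G$ is. Codimension-$2$ faces of parallelohedra need not be centrally symmetric (the triangular $2$-faces of the $24$-cell in $\R^4$ are a standard example), so condition~(2) for $P+I$ is a genuine constraint. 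It does follow from the hypothesis, but only via an argument you omit: if $G$ is non-contact with facets $F(xy)$, $F(xz)$, $F(yz)$ around it and one of them is parallel to $v$, then the line $v^\perp\cap(\lin G)^\perp$ passes through a boundary ray of two of the three normal cones $N_{P+x}(G)$, $N_{P+y}(G)$, $N_{P+z}(G)$ and misses the third entirely, so no new facet is created; hence only contact codimension-$2$ faces, which \emph{are} centrally symmetric, can be swept into facets of $P+I$.

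Second, in the forward direction the assertion that projecting $\mathcal T_{P+I}$ along $\lin G$ yields ``a planar tiling by translates of $H+\pi(I)$'' is not justified: $\lin G$ need not be the span of a face of $\mathcal T_{P+I}$, and the projection of a $d$-dimensional tiling along an arbitrary $(d-2)$-subspace is not a tiling. The correct route to the same conclusion is to observe that, when $v\notin\lin G$, the face $G+(\text{endpoint of }I)$ is a codimension-$2$ face of $P+I$ with linear span $\lin G$, so the third Minkowski--Venkov condition applied to $P+I$ forces $H+\pi(I)$ to be a parallelogram or hexagon, whence $\pi(I)$ is parallel to an edge of the hexagon $H$ and $v$ is parallel to one of the three facets. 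With these two repairs the forward direction and cases (a)--(b) of the reverse direction are sound; the proposal still falls short of a proof because the parallelogram-or-hexagon property of the projection of $P$ along $\lin F\oplus\R v$ for a codimension-$3$ face $F$ (your case~(c)) is precisely the hard content of the lemma and is nowhere established.
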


The next lemma summarizes some useful combinatorial properties of a parallelohedron $P + I$ that have been established to date. We use these properties in Section \ref{sec:free}.

\begin{lemma}\label{lem:p+i}
Let $P$ be a $d$-dimensional parallelohedron with a free direction $I$. If $P+I$ satisfies the Voronoi conjecture, then $P$ satisfies the Voronoi conjecture.
\end{lemma}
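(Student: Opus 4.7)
The plan is to invoke Voronoi's equivalent characterization from Section \ref{sec:def}: a parallelohedron satisfies the Voronoi conjecture if and only if its tiling admits a canonical scaling. Since $P+I$ satisfies the Voronoi conjecture, the tiling $\mathcal{T}_{P+I}$ admits a canonical scaling $s'\colon \mathcal{T}_{P+I}^{d-1}\to \R_+$. I will construct a canonical scaling $s\colon \mathcal{T}_P^{d-1}\to\R_+$ out of $s'$ and verify both conditions in Definition \ref{def:scaling}.

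The construction rests on the observation that there is a natural bijection between facets of $P$ and facets of $P+I$ that preserves unit outward normals. A facet $F$ of $P$ not parallel to $I$ corresponds to the translated facet $F+v$ of $P+I$, where $v$ is the endpoint of $I$ lying on the $n_F$-side of $F$; a facet $F$ of $P$ parallel to $I$ corresponds to the widened facet $F+I$ of $P+I$. In either case the outward normal is preserved, $n_F=n_{F'}$, and I define $s(F):=s'(F')$, extended equivariantly to all of $\mathcal{T}_P^{d-1}$.

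To verify the canonical scaling conditions, I examine each codimension-2 face $G$ of $\mathcal{T}_P$. If $\mathcal{D}(G)$ is a parallelogram (contact case), then the four incident facets come in two pairs of parallel facets, and the canonical scaling condition reduces to $s$ taking equal values on parallel facets; this is inherited automatically from the corresponding property of $s'$. If $\mathcal{D}(G)$ is a triangle (non-contact case), then Lemma \ref{lem:free} applied to the free direction $I$ guarantees that at least one of the three incident facets, say $F_1$, is parallel to $I$. This geometric constraint forces the three corresponding facets $F_1',F_2',F_3'$ of $\mathcal{T}_{P+I}$ to share a common codimension-2 face $G'$, and there the canonical scaling condition of $s'$ gives a linear relation $\pm s'(F_1')n_{F_1'}\pm s'(F_2')n_{F_2'}\pm s'(F_3')n_{F_3'}=0$; the equalities $n_{F_i}=n_{F_i'}$ and $s(F_i)=s'(F_i')$ then translate this into the required condition at $G$.

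The main obstacle is the combinatorial bookkeeping: one must trace precisely how codimension-2 faces of $\mathcal{T}_P$ correspond to codimension-2 faces of $\mathcal{T}_{P+I}$ under zone extension. The delicate case is a non-contact triangular dual cell in $\mathcal{T}_P$ whose $I$-parallel facet $F_1$ gets widened to $F_1+I$ in $P+I$; one has to show that the relevant triple in $\mathcal{T}_{P+I}$ still meets along a codimension-2 face (which may carry a parallelogram dual cell, since the extra degree of freedom along $I$ can turn a non-contact triangular cell of $\mathcal{T}_P$ into a contact parallelogram cell of $\mathcal{T}_{P+I}$). Once this correspondence between codimension-2 faces is pinned down, the verification of the canonical scaling axioms is essentially automatic, and Voronoi's equivalence concludes the proof.
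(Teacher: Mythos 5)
The paper does not actually prove this lemma: it cites \cite[Theorem~4]{Gri2} and \cite{Vegh-contraction}, remarking that Grishukhin's argument goes through canonical scaling while V\'egh constructs the affine transformation explicitly. Your proposal is therefore an attempt to reconstruct Grishukhin's proof. The reduction you choose is the right one (a canonical scaling for $\mathcal T_{P+I}$ should induce one for $\mathcal T_P$ via Voronoi's criterion), but the write-up contains a false preliminary claim and, more importantly, defers exactly the step that constitutes the content of the lemma.

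The false claim first: there is no bijection between facets of $P$ and facets of $P+I$. Minkowski addition of $I$ can create new facets parallel to $I$ out of $(d-2)$-faces of $P$ (already in the plane: $P$ a square, $I$ a generic segment in a free direction, $P+I$ a hexagon with two new edges). What is true is that every facet of $P$ yields a facet of $P+I$ with the same normal, which suffices to define $s(F):=s'(F')$, so this error is repairable. The genuine gap is the verification at non-contact codimension-$2$ faces. You assert that the three facets $F_1',F_2',F_3'$ are ``forced'' to share a codimension-$2$ face $G'$ of $\mathcal T_{P+I}$, but you give no argument for this, and in the next breath you allow that the dual cell at $G'$ might be a parallelogram. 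Those two statements cannot both help you: a parallelogram dual cell carries only two distinct facet normal directions, so it cannot be incident to three pairwise non-parallel facets, and the canonical scaling condition at a contact codimension-$2$ face only equates the scalings of parallel facets --- it does not produce the three-term dependence $\pm s(F_1)n_{F_1}\pm s(F_2)n_{F_2}\pm s(F_3)n_{F_3}=0$ that you need to pull back to $G$. So either the triangular dual cell survives the zone extension (which must be proved by tracking how $G$ deforms, bearing in mind that $\Lambda_{P+I}$ is in general a different lattice from $\Lambda_P$, so the correspondence of codimension-$2$ faces is not simply a translation), or the three facets cease to share a codimension-$2$ face in $\mathcal T_{P+I}$ and their scalings must be related through a chain of intermediate faces. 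Your closing paragraph concedes that this ``combinatorial bookkeeping'' is the main obstacle and leaves it unresolved; that obstacle is the proof, and for it one should defer, as the paper does, to \cite{Gri2} or \cite{Vegh-contraction}.
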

\begin{proof}
See~\cite[Theorem~4]{Gri2} or~\cite{Vegh-contraction}.
\end{proof}

\begin{remark}
The proof by Grishukhin~\cite{Gri2} relies on the technique of canonical scaling, while V\'egh~\cite{Vegh-contraction} provided an explicit construction of the affine transformation from $P$ to a Dirichlet-Voronoi polytope given a transformation for $P + I$.
\end{remark}

\section{New lemmas}\label{sec:lemmas}

In this section we prove several new lemmas that we use in the proof of our main result. 

First of all we formulate several properties of dual cells that are crucial for our approach to five-dimensional parallelohedra. However, it is worth noting that all lemmas in this section hold in any dimension.

\begin{lemma}\label{lem:intersection}
Let $F$ and $G$ be two faces of $P$ and let $H$ be the minimal face of $P$ that contains both $F$ and $G$. Then $$\mathcal D(H)=\mathcal D(F)\cap\mathcal D(G).$$
\end{lemma}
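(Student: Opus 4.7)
The plan is to prove the claimed equality by showing both inclusions, with the forward direction being immediate from the definition and the reverse direction relying on the face-to-face property of $\mathcal T_P$.

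For the inclusion $\mathcal D(H)\subseteq \mathcal D(F)\cap \mathcal D(G)$, I would just unwind the definition: since $F\subseteq H$ and $G\subseteq H$, any lattice point $x$ satisfying $H\subseteq P+x$ automatically satisfies $F\subseteq P+x$ and $G\subseteq P+x$, hence belongs to $\mathcal D(F)\cap \mathcal D(G)$. This step is formal and uses nothing beyond the definition of dual cells.

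For the reverse inclusion $\mathcal D(F)\cap \mathcal D(G)\subseteq \mathcal D(H)$, the key idea is to use that the tiling $\mathcal T_P$ is face-to-face. Pick any $x\in \mathcal D(F)\cap \mathcal D(G)$ and set $F_x := P\cap (P+x)$. By face-to-face property, $F_x$ is a common face of $P$ and $P+x$; in particular, $F_x$ is a face of $P$. Since $F\subseteq P$ and $F\subseteq P+x$, we have $F\subseteq F_x$, and similarly $G\subseteq F_x$. Thus $F_x$ is a face of $P$ containing both $F$ and $G$, so by the minimality assumption on $H$ we must have $H\subseteq F_x\subseteq P+x$, which means $x\in \mathcal D(H)$. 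The case $x=0$ is harmless because $F_x=P$ and $H\subseteq P$ trivially.

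The main (and really only) delicate point is the step that $F_x=P\cap(P+x)$ is a face of $P$, which is exactly the face-to-face property of the translational tiling, established by Venkov and McMullen and recalled in Section~\ref{sec:def}. Aside from that, the argument is just a direct translation of the definition of the dual cell combined with the minimality of $H$, so I expect no technical obstacle.
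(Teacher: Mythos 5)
Your proof is correct and follows essentially the same route as the paper's: both arguments hinge on the face-to-face property (so that $P\cap(P+x)$ is a face of $P$) combined with the minimality of $H$ in the face lattice. The only difference is presentational—you split the equality into two inclusions, while the paper phrases it as a chain of equivalences.
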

\begin{proof}
Let $Q$ be the copy of $P$ centered at a point of $\Lambda_P$. The polytope $Q$ contains $F$ and $G$ if and only if $P\cap Q$ contains $F$ and $G$. The intersection $P\cap Q$ is a face of $P$ and it contains $F$ and $G$ if and only if it contains $H$. Hence $Q$ contains $F$ and $G$ if and only if $Q$ contains $H$. This implies the equality for dual cells.
\end{proof}

\begin{definition}
Let $D$ be a dual cell. We define the {\it set of midpoints} for the dual cell $D$ as the set of all classes of midpoints within $D$, so
$$M_D:=\left\{\left.\frac{X+Y}{2}+\Lambda_P\right| X,Y\in D\right\}\subseteq \Lambda_{1/2}.$$
Here $\frac{X+Y}{2}$ is the midpoint of the segment $XY$. Note, that we do not require $X$ and $Y$ to be different, so the class $\langle 0,0,\ldots,0\rangle$ is always in $M_D$.
\end{definition}

Next two lemmas transform translation invariance of $\mathcal T_P$ into invariance of dual cells. Particularly, they use that if one representative of $\Lambda_{1/2}$ class is the center of a dual $k$-cell, then all points from that class are centers of translations of this $k$-cell. We also refer to \cite[Lem. 6]{Ord} for a very similar result that can be used to prove these lemmas.

\begin{lemma}\label{lem:translated_cell}
Let $D$ be a dual cell and let $F$ be a contact face of $P$ with the center $c_F$. Let $x$ be the midpoint of a segment connecting two points of $D$. If $x$ and $c_F$ represent the same class in $\Lambda_{1/2}$ then $D$ contains the translated copy $\mathcal D(F)+\overrightarrow{c_Fx}$ of the dual cell of $F$.
\end{lemma}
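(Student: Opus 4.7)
The plan is to reduce the statement to an instance of Lemma \ref{lem:intersection} by identifying the shifted contact face $F + \overrightarrow{c_F x}$ with a face of $\mathcal T_P$ already known to contain some obvious subface produced from the two chosen points of $D$.

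First, I would set up notation: write $D = \mathcal D(G)$ for some face $G$ of $\mathcal T_P$, and pick $A,B\in D$ with $x=\tfrac{A+B}{2}$. Since $x$ and $c_F$ lie in the same class of $\Lambda_{1/2} = (\tfrac12\Lambda_P)/\Lambda_P$, the vector $\overrightarrow{c_F x}=x-c_F$ belongs to $\Lambda_P$, so translation by it is a symmetry of $\mathcal T_P$. Define $F' := F + \overrightarrow{c_F x}$. Because $F$ is a contact face with center $c_F$, its lattice translate $F'$ is a contact face of $\mathcal T_P$ with center $x$, and the corresponding dual cell satisfies $\mathcal D(F') = \mathcal D(F) + \overrightarrow{c_F x}$.

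Next I would show $G\subseteq F'$. Since $A,B\in\mathcal D(G)$, the face $G$ lies in both $P+A$ and $P+B$, hence in the intersection $(P+A)\cap(P+B)$. By the face-to-face property of $\mathcal T_P$, this intersection is itself a face of $\mathcal T_P$, and being the intersection of two distinct tiles, it is a contact face whose center is the midpoint $\tfrac{A+B}{2}=x$. Using the bijection between half-lattice points and contact faces (equivalently, the fact that the unique face of $\mathcal T_P$ containing $x$ in its relative interior is this contact face), we conclude $(P+A)\cap(P+B) = F'$. Therefore $G\subseteq F'$.

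Finally, I would apply Lemma \ref{lem:intersection}: since $G\subseteq F'$, the minimal face containing both is $F'$ itself, and the lemma gives $\mathcal D(F') = \mathcal D(F')\cap\mathcal D(G)\subseteq\mathcal D(G)=D$. Combined with $\mathcal D(F') = \mathcal D(F)+\overrightarrow{c_F x}$, this is exactly the desired inclusion.

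The argument is essentially bookkeeping, so there is no single ``hard step.'' The only point that deserves care is the identification $(P+A)\cap(P+B)=F'$: it relies both on the face-to-face property (so that the intersection is genuinely a face of $\mathcal T_P$) and on the uniqueness of the contact face with a prescribed half-lattice center, which is the feature that actually transports translation invariance of $\mathcal T_P$ into invariance of the dual-cell complex.
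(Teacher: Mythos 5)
Your proposal is correct and follows essentially the same route as the paper: both identify the lattice translate $F+\overrightarrow{c_Fx}$ with the contact face $(P+A)\cap(P+B)$ centered at $x$ via the uniqueness of the contact face with a given half-lattice center, and then observe that this face contains $G$, so its dual cell sits inside $D$. Your invocation of Lemma \ref{lem:intersection} at the end just makes explicit the monotonicity of dual cells that the paper uses implicitly.
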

\begin{proof}
Let $y$ and $z$ be points in $D$ such that $x=\frac{y+z}{2}$. Two polytopes $P+y$ and $P+z$ have a non-empty intersection, so their intersection is a contact face $G$ of $\mathcal T_P$ with center $x$ such that $\mathcal{D}(G)$ is a subcell of $D$ because $(P+y)\cap (P+z)$ contains the face corresponding to $D$.

The translation by vector $\overrightarrow{c_Fx}$ moves $c_F$ to $x$ and therefore moves the contact face $F$ centered at $c_F$ into the contact face $G$ centered at $x$. Thus the translation of the dual cell $\mathcal D(F)$ is $\mathcal D(G)$ which is contained in $D$.
\end{proof}

\begin{lemma}\label{lem:translated_pair}
Let $D$ be a dual cell and let $A$ and $B$ be two points in one dual cell of $P$. Let $x$ be the midpoint of a segment connecting two points of $D$. If $x$ and the midpoint $c$ of $AB$ represent the same class in $\Lambda_{1/2}$ then $D$ contains the translated copy $AB+\overrightarrow{cx}$ of the segment $AB$.
\end{lemma}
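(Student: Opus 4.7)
The plan is to deduce this lemma as a direct corollary of Lemma~\ref{lem:translated_cell}, by constructing a suitable contact face whose dual cell contains the pair $\{A,B\}$ and whose center is exactly $c$.

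First I would use the hypothesis that $A$ and $B$ lie in a common dual cell to show that the intersection $(P+A)\cap (P+B)$ is non-empty: any face $F'$ of $\mathcal T_P$ witnessing $A,B\in\mathcal D(F')$ satisfies $F'\subseteq (P+A)\cap (P+B)$. By definition, $(P+A)\cap(P+B)$ is then itself a contact face $F$ of $\mathcal T_P$, and from the discussion of contact faces preceding Lemma~\ref{lem:parity} its center is $c_F=\tfrac{A+B}{2}=c$. Since $F\subseteq P+A$ and $F\subseteq P+B$, both $A$ and $B$ lie in $\mathcal D(F)$.

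Next I would apply Lemma~\ref{lem:translated_cell} to this contact face $F$ and the given dual cell $D$. The hypothesis that $x$ and $c$ represent the same class in $\Lambda_{1/2}$ is precisely the condition $x\equiv c_F$ required by that lemma, so its conclusion gives
\[\mathcal D(F)+\overrightarrow{c_Fx}\subseteq D.\]
Restricting this inclusion to the two points $A,B\in\mathcal D(F)$ yields $A+\overrightarrow{cx}\in D$ and $B+\overrightarrow{cx}\in D$, which is exactly the claim that the translated copy $AB+\overrightarrow{cx}$ is contained in $D$ (understood, in the spirit of Lemma~\ref{lem:translated_cell}, as the pair of its two lattice endpoints).

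I do not expect any real obstacle: the argument is essentially a one-step reduction to Lemma~\ref{lem:translated_cell}. The only point needing minor care is verifying that $(P+A)\cap(P+B)$ is genuinely non-empty, which is immediate from the assumption that $A$ and $B$ share a dual cell. In particular, one does not need the full power of Lemma~\ref{lem:intersection} here; the set-inclusion observation above suffices.
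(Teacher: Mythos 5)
Your proposal is correct and matches the paper's proof, which is exactly the one-line reduction you describe: apply Lemma~\ref{lem:translated_cell} to the cell $D$ and the contact face $F=(P+A)\cap(P+B)$, whose center is $c$ and whose dual cell contains $A$ and $B$. The extra detail you supply (non-emptiness of the intersection and the identification of its center) is a faithful elaboration of the same argument.
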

\begin{proof}
We use Lemma \ref{lem:translated_cell} for the cell $D$ and the face $F$ which is the intersection of the copies of $P$ centered at $A$ and $B$.
\end{proof}

Mostly we will use this lemma when $F$ is a facet or, which is the same, when $\mathcal D(F)$ is the segment $AB$ as in the two lemmas below.

\begin{lemma}\label{lem:midpoints}
Let $KL$ be a facet vector. If $M$ and $N$ are two points within one dual cell such that the midpoints of $KL$ and $MN$ belong to the same class in $\Lambda_{1/2}$, then $\overrightarrow{KL}=\pm\overrightarrow{MN}$.
\end{lemma}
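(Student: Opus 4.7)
The plan is to invoke Lemma~\ref{lem:translated_pair} in the ``reverse'' direction, translating the pair $\{M,N\}$ into the two-point dual cell of the facet whose facet vector is $KL$, and then exploiting the fact that this target cell has only two points.

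First I would observe that, by definition of a facet vector, the dual cell of the facet $F_0$ giving rise to $KL$ is exactly $D_0 := \mathcal{D}(F_0) = \{K, L\}$. The midpoint $c := \frac{K+L}{2}$ is therefore the only non-trivial midpoint arising from pairs in $D_0$. Let $D$ be the dual cell containing both $M$ and $N$, and write $x := \frac{M+N}{2}$ for the midpoint of $MN$.

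Next I would apply Lemma~\ref{lem:translated_pair} with the roles reversed from how one might first read it: take the ``cell'' to be $D_0$, take the pair $A,B$ to be $M,N$ (which indeed lie in one dual cell, namely $D$), and use $c$ itself as the point in $D_0$ playing the role of ``$x$'' in the lemma. The midpoint of $AB$ is $x$, and by the hypothesis of the lemma we are proving, $c$ and $x$ represent the same class in $\Lambda_{1/2}$. Lemma~\ref{lem:translated_pair} then tells us that $D_0$ contains the translated segment $MN + \overrightarrow{xc}$, whose endpoints compute to
\[
M' \;=\; \tfrac{1}{2}(M - N + K + L) \qquad \text{and} \qquad N' \;=\; \tfrac{1}{2}(N - M + K + L).
\]
The condition $c \equiv x$ in $\Lambda_{1/2}$ guarantees $M', N' \in \Lambda_P$, so these are legitimate lattice points.

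Finally, $M'$ and $N'$ lie in $D_0 = \{K, L\}$ and satisfy $M' + N' = K + L$. Since $D_0$ has only two elements, the set $\{M', N'\}$ must coincide with $\{K, L\}$. Either $M' = K$ and $N' = L$, in which case $\overrightarrow{MN} = \overrightarrow{KL}$, or $M' = L$ and $N' = K$, in which case $\overrightarrow{MN} = -\overrightarrow{KL}$. In either case $\overrightarrow{KL} = \pm \overrightarrow{MN}$, as required. There is no genuine obstacle here; the only subtlety is to notice that Lemma~\ref{lem:translated_pair} can be applied with the roles of the ``cell'' and the ``pair of points'' exchanged, which is precisely what reduces the statement to the smallness of the facet's dual cell.
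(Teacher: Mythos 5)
Your proof is correct and is essentially the paper's own argument: the paper likewise translates the configuration containing $M$ and $N$ into the two-point dual cell $\{K,L\}$ (via Lemma~\ref{lem:translated_cell} applied to the contact face of $P+M$ and $P+N$, which is exactly what Lemma~\ref{lem:translated_pair} packages) and concludes from the smallness of that cell that $MN$ is a translate of $KL$. The ``reversed'' reading of Lemma~\ref{lem:translated_pair} that you highlight is indeed the intended use here, so there is no gap.
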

\begin{proof}
Two copies of $P$ centered at $M$ and $N$ have a non-empty intersection $F$. We use the previous Lemma \ref{lem:translated_cell} for the dual cell $KL$ and points $M$ and $N$ that both belong to $\mathcal D(F)$. The translation of $\mathcal D(F)$ must fit within $KL$ which implies that $\mathcal D(F)$ contains exactly two points $M$ and $N$ and segments $MN$ and $KL$ are translations of each other. Hence $\overrightarrow{KL}=\pm\overrightarrow{MN}$.
\end{proof}

\begin{lemma}\label{lem:3+1}
Let $D$ be a dual cell. Suppose $K$, $L$ and $M$ are three points of $D$ such that segments $KL$, $LM$, and $MK$ are facet vectors. Then $D$ does not contain a point from the parity class of $K+L+M$.
\end{lemma}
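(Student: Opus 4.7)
The plan is a proof by contradiction. Suppose $N \in D$ belongs to the parity class of $K+L+M$. First I would observe that $N$ is genuinely a fourth point, distinct from $K$, $L$, $M$: since $K$, $L$, $M$ lie in three distinct parity classes by Lemma~\ref{lem:parity}, none of them can agree with $K+L+M$ modulo $2\Lambda_P$, as for instance $K \equiv K+L+M \pmod{2\Lambda_P}$ would force $L \equiv M$.

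The heart of the argument is that the parity assumption on $N$ pairs up opposite edges of the ``tetrahedron'' $KLMN$ via classes in $\Lambda_{1/2}$. From $N \equiv K+L+M \pmod{2\Lambda_P}$ one has $K+N \equiv L+M \pmod{2\Lambda_P}$, so the midpoints $\tfrac{K+N}{2}$ and $\tfrac{L+M}{2}$ represent the same class in $\Lambda_{1/2}$; the analogous congruences hold for $\{LN, KM\}$ and $\{MN, KL\}$. Applying Lemma~\ref{lem:midpoints} to each of the three facet vectors $LM$, $KM$, $KL$ together with the corresponding pair of points in $D$ then yields
\[
\overrightarrow{KN} = \pm\overrightarrow{LM}, \qquad \overrightarrow{LN} = \pm\overrightarrow{KM}, \qquad \overrightarrow{MN} = \pm\overrightarrow{KL}.
\]

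A short sign analysis completes the argument. From the first equation either $N = K+M-L$ or $N = K+L-M$. In the first case the second equation reads $K+M-2L = \pm(M-K)$, forcing $K=L$ or $L=M$; in the second case the third equation reads $K+L-2M = \pm(L-K)$, forcing $K=M$ or $L=M$. All of these conclusions contradict the fact that the three facet vectors are nonzero, so no such $N$ can exist. There is no genuine obstacle in the argument; the one point to watch is verifying the three midpoint congruences, which is precisely where the specific parity class $K+L+M$ (rather than any other combination of $K, L, M$) is exploited.
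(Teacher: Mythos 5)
Your proof is correct and follows essentially the same route as the paper: use the parity congruence to match midpoints of opposite edges of $KLMN$, apply Lemma \ref{lem:midpoints} to each facet vector, and derive a contradiction from the three resulting vector equalities. You actually supply more detail than the paper, which simply asserts that the three equalities cannot hold simultaneously, whereas you carry out the sign analysis explicitly (and correctly).
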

\begin{proof}
Suppose $D$ contains a point $N = K+L+M \pmod {2\Lambda_P}$. The midpoints of $KL$ and $MN$ differ by a vector of $\Lambda_P$ because $\frac{K+L}{2}=\frac{M+N}{2} \pmod {\Lambda_P}$, hence Lemma \ref{lem:midpoints} for the facet vector $KL$ and pair of points $M$ and $N$ within $D$ implies that $\overrightarrow{KL}=\pm\overrightarrow{MN}$.

Similarly $\overrightarrow{KM}=\pm\overrightarrow{LN}$ and $\overrightarrow{LM}=\pm\overrightarrow{KN}$ but all these three equalities cannot be satisfied simultaneously.
\end{proof}

Also we will use the following corollary of the criterion from Lemma \ref{lem:free} stated in terms of the set of midpoints of dual cell of an edge.

\begin{lemma}\label{lem:free_space}
Let $I$ be an edge of $P$. If there is a $(d-1)$-dimensional linear subspace $\pi$ of $\Lambda_{1/2}$ such that each class of $\pi$ is in $M_{\mathcal D(I)}$ or corresponds to a non-facet contact face of $P$, then $I$ is a free direction of $P$.
\end{lemma}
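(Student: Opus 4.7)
The plan is to invoke Lemma \ref{lem:free}: it suffices to show that for every non-contact $(d-2)$-face $G$ of $\mathcal T_P$ with triangular dual cell $\mathcal D(G)=xyz$, at least one of the facets $F(xy)$, $F(xz)$, $F(yz)$ is parallel to the direction $v$ of the edge $I$.

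The first step is a dimension count in $\Lambda_{1/2}\cong\F_2^d$. Look at the classes of the three facet centers $m_{xy}=(x+y)/2$, $m_{xz}=(x+z)/2$, $m_{yz}=(y+z)/2$. In $\Lambda_{1/2}$ their sum equals $\overline{x+y+z}=0$, and Lemma \ref{lem:parity} forces them to be pairwise distinct (for example $\bar m_{xy}-\bar m_{xz}=\overline{(y-z)/2}\neq 0$ because $y\not\equiv z\pmod{2\Lambda_P}$). Hence $\{0,\bar m_{xy},\bar m_{xz},\bar m_{yz}\}$ is a $2$-dimensional $\F_2$-subspace $V\subseteq\Lambda_{1/2}$, and since $\dim\pi=d-1$ we have $\dim(V\cap\pi)\ge 2+(d-1)-d=1$. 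Therefore at least one of the midpoint classes, say $\bar m_{xy}$, lies in $\pi$.

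The second step converts this into geometric parallelism. Because $m_{xy}$ lies in the relative interior of the facet $F(xy)$, the class $\bar m_{xy}$ corresponds to a facet contact face; the hypothesis on $\pi$ then forces $\bar m_{xy}\in M_{\mathcal D(I)}$. So there exist $X,Y\in\mathcal D(I)$ with $(X+Y)/2\equiv m_{xy}\pmod{\Lambda_P}$, and Lemma \ref{lem:midpoints} applied to the facet vector $xy$ and the pair $X,Y\subseteq\mathcal D(I)$ gives $\overrightarrow{XY}=\pm\overrightarrow{xy}$. Therefore $F(XY)=(P+X)\cap(P+Y)$ is a facet, a translate of $F(xy)$, and it contains $I$ because $X,Y\in\mathcal D(I)$. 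Consequently $F(xy)$ is parallel to $v$, and Lemma \ref{lem:free} closes the argument.

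The step I expect to demand the most care is this second one: the hypothesis only provides an abstract equality of half-lattice classes, whereas the goal is a concrete geometric parallelism. Lemma \ref{lem:midpoints} is the crucial bridge, upgrading the a priori weak statement ``two midpoints lie in the same $\Lambda_{1/2}$-class'' to the rigid identity that the underlying chord of $\mathcal D(I)$ agrees with the facet vector $xy$ up to sign. Without this upgrade, the pair $X,Y\in\mathcal D(I)$ could in principle span a chord that is not a facet vector at all, and the construction of a translated facet containing $I$ would collapse.
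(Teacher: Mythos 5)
Your proposal is correct and follows essentially the same route as the paper: the same $\F_2$-dimension count shows one of the three midpoint classes of the triangular dual cell lies in $\pi$, and since that class represents a facet it must lie in $M_{\mathcal D(I)}$, after which the translation argument (you via Lemma \ref{lem:midpoints}, the paper directly via Lemma \ref{lem:translated_cell}, from which Lemma \ref{lem:midpoints} is itself derived) produces a translate of the facet containing $I$ and hence the parallelism required by Lemma \ref{lem:free}.
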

\begin{proof}
Let $KLM$ be any triangular dual cell of $\mathcal T_P$. Points $K$, $L$, and $M$ belong to different parity classes so the midpoints $\frac{K+L}{2}$, $\frac{L+M}{2}$, and $\frac{M+K}{2}$ represent different classes in $\Lambda_{1/2}$. The sum
$$\frac{K+L}{2}+\frac{L+M}{2}+\frac{M+K}{2}=0\in \Lambda_{1/2},$$
hence the three midpoints together with the origin fill a two-dimensional linear subspace of $\Lambda_{1/2}$. This two-dimensional subspace has a non-trivial intersection with $\pi$, so we can assume that $\frac{K+L}{2}\in \pi$. 

The midpoint $\frac{K+L}{2}$ represents a facet, thus it coincides with the class of some midpoint of the dual cell of $I$. Lemma \ref{lem:translated_cell} implies that $\mathcal D(I)$ contains a translated copy of the edge $KL$ which means that a translation of the facet corresponding to $KL$ contains $I$. Therefore the facet corresponding to $KL$ is parallel to $I$.

Now Lemma \ref{lem:free} implies that $I$ is a free direction for $P$.
\end{proof}

\section{Main theorem and the core of the proof}\label{sec:main}

In this section we provide an outline for the proof of our main Theorem \ref{thm:main}. In the following sections we fill in all the details for each specific step of the proof. We also note that an implementation of similar approach in $\R^4$ is described in \cite{GMrms}.

\begin{theorem}\label{thm:main}
The Voronoi conjecture is true in $\R^5$.
\end{theorem}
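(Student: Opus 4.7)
The plan is to leverage the machinery assembled in Sections~\ref{sec:def} and~\ref{sec:lemmas} to reduce the conjecture, via a case analysis on the combinatorial type of the dual $3$- and $4$-cells, to situations already covered by the theorems of Voronoi, Zhitomirski, and Ordine. Recall that Ordine's theorem settles the Voronoi conjecture as soon as every dual $3$-cell of $\mathcal T_P$ is combinatorially a tetrahedron, octahedron, or pyramid over a parallelogram. Since Delone's list gives only five combinatorial types of dual $3$-cells, the only obstructions are dual $3$-cells that are combinatorially triangular prisms or cubes. The goal is therefore to show that whenever such an obstruction appears in a $5$-dimensional $\mathcal T_P$, either $P$ has a free direction (so one can induct via Lemma~\ref{lem:p+i}) or a direct combinatorial argument forces the existence of a canonical scaling.

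First, I would enumerate the dual $4$-cells that can occur around a codimension-$4$ face of $\mathcal T_P$. Using Lemma~\ref{lem:parity} (one representative per parity class), Lemmas~\ref{lem:translated_cell}--\ref{lem:3+1} (translation invariance of dual cells induced by contact faces), and Delone's list of dual $3$-cells, the list of candidate dual $4$-cells should be a short, explicit catalogue indexed by the multiset of $3$-cells appearing on its boundary. This is the combinatorial bookkeeping done in Section~\ref{sec:lemmas} and used section by section afterwards. Having the list, each case can be tackled in turn in Sections~\ref{sec:free} through~\ref{sec:py-py-py}.

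The reductions themselves split into two main flavours. In the first, carried out in Section~\ref{sec:free} and some of the subsequent sections, one uses Lemma~\ref{lem:free_space}: if the midpoints of contact facets of $\mathcal T_P$ cover a hyperplane $\pi\subset \Lambda_{1/2}$, an edge of $P$ whose dual cell contains representatives of every class of $\pi$ yields a free direction $I$ for $P$. By Lemma~\ref{lem:p+i} the Voronoi conjecture for $P$ follows from that for $P+I$, and the new polytope has strictly simpler dual-cell combinatorics — often enough to remove the triangular prisms and cubes and trigger Ordine's theorem. In the second flavour, one shows directly, using the central symmetry of contact dual cells (Lemma~\ref{lem:translated_cell}) and the rigidity of facet vectors (Lemma~\ref{lem:midpoints}), that around each problematic dual $4$-cell the gain function $\gamma$ of Definition~\ref{def:gain} has trivial holonomy on every elementary cycle; by \cite{GGM} this gives a canonical scaling and hence the Voronoi conjecture.

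I expect the main obstacle to be the final \emph{py-py-py} case treated in Section~\ref{sec:py-py-py}, namely a dual $4$-cell whose $3$-dimensional faces are all pyramids over parallelograms arranged around one apex (or around a short chain of such apices). In this configuration the naive search for a free direction fails — there is no hyperplane of contact midpoints available in $\Lambda_{1/2}$ — so the conclusion must come by explicitly checking triviality of $\gamma$ on the cycles of triangular dual $2$-cells inside this $4$-cell. The hard part will be to organise the parity information coming from Lemmas~\ref{lem:parity} and~\ref{lem:3+1} tightly enough to pin down the geometry of the $4$-cell up to the freedom allowed by a canonical scaling; once this is done, the existing theorems of Voronoi, Zhitomirski, and Ordine close the remaining cases and deliver Theorem~\ref{thm:main}.
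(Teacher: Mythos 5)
Your overall skeleton matches the paper's: reduce via Ordine's theorem to the cases where a prismatic or cubical dual $3$-cell occurs, and in each such case either produce a free direction or establish a canonical scaling by showing the gain function $\gamma$ has trivial holonomy. Your identification of the pyramid-pyramid-pyramid configuration as the case requiring the canonical-scaling argument rather than a free direction is also accurate.

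However, there is a genuine gap in the step where you dispose of the free-direction cases. You propose to ``induct via Lemma~\ref{lem:p+i}'' on the grounds that $P+I$ ``has strictly simpler dual-cell combinatorics --- often enough to remove the triangular prisms and cubes and trigger Ordine's theorem.'' This is not established anywhere and there is no well-founded measure that decreases when passing from $P$ to $P+I$; adding a segment in a free direction does not in any evident way eliminate prismatic or cubical dual $3$-cells. The actual argument (Lemma~\ref{lem:5-free}) is quite different and leans on heavy external input: the projection of $P$ along $I$ is a four-dimensional parallelohedron, which satisfies the Voronoi conjecture by Delone's theorem; Theorem~\ref{thm:free_voronoi} then upgrades this to show that $P+I$ is \emph{equivalent} (in the sense of Definition~\ref{def:equiv}) to the Dirichlet-Voronoi cell of the lattice $\Lambda_{P+I}$; and the results of \cite{DGM-arxiv}, which rest on the classification \cite{5dim} and the combinatorial criterion of \cite{GGM}, show that every five-dimensional parallelohedron equivalent to a Voronoi polytope satisfies the conjecture. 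Only then does Lemma~\ref{lem:p+i} transfer the conclusion back to $P$. Without some substitute for this chain, ``$P$ has a free direction'' does not yield the Voronoi conjecture for $P$, and the majority of your case analysis is left hanging. A secondary omission: in the paper the Prism-Prism-Prism case is resolved by showing the red Venkov graph is disconnected, so $P$ decomposes as a direct sum of lower-dimensional parallelohedra, and the two mixed cases (two prisms/one pyramid and one prism/two pyramids) are shown to be outright impossible because they would force a centrally symmetric contact $3$-face to be a tetrahedron; none of these mechanisms appears in your outline.
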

\begin{proof}
This proof relies on several supplementary results that are proved in subsequent sections. However, whenever a proof of some implication is deferred, we give a reference to a particular section.
 
By Lemma \ref{lem:5-free}, the main result of Section \ref{sec:free}, a five-dimensional parallelohedron $P$ satisfies the Voronoi conjecture if it has a free direction. Consequently, it will be sufficient to prove that every five-dimensional parallelohedron $P$ satisfies at least one of the following properties:

\begin{enumerate}
\item $P$ has a free direction;
\item $P$ admits a canonical scaling.
\end{enumerate}

Let $P$ be a five-dimensional paralleloheron. We turn our attention to dual 3-cells associated with two-dimensional faces of $P$. By a result of Ordine~\cite{Ord} (see also \cite{Ord-arxiv}), if all dual 3-cells of $P$ are either tetrahedra, octahedra, or pyramids, then $P$ admits a canonical scaling and therefore the Voronoi conjecture is true for $P$.

According to Corollary \ref{cor:cube}, the main result of Section~\ref{sec:cube}, if $P$ has a dual 3-cell combinatorially equivalent to a cube, then $P$ has a free direction. In this case the Voronoi conjecture is true for $P$.

To this end, the situation that is still to be considered is as follows: at least one dual 3-cell for $P$ is a triangular prism, while every other dual 3-cell is a tetrahedron, a pyramid, an octahedron, or a prism.

Let $F$ be a 2-dimensional face of $\mathcal T_P$ whose dual cell $\mathcal D(F)$ is a triangular prism. By Lemmas \ref{lem:triangle} and \ref{lem:edge_cell}, two main results of Section~\ref{sec:prism}, $P$ has a free direction unless $F$ is a triangle, which we denote by $xyz$, and unless each of the dual 4-cells $\mathcal D(xy)$, $\mathcal D(xz)$ and $\mathcal D(yz)$ is (combinatorially) either a pyramid over $\mathcal D(F)$ or a prism over a tetrahedron. Let $pr(F)$ denote the number of prismatic 4-cells among the dual cells $\mathcal D(xy)$, $\mathcal D(xz)$ and $\mathcal D(yz)$. We proceed by the case analysis.

\noindent {\bf Case 1} or {\bf Prism-Prism-Prism case}. There exists $F$ with $pr(F) = 3$. According to Lemma \ref{lem:pr-pr-pr}, the main result of Section~\ref{sec:pr-pr-pr}, this is only possible if $P$ is a direct sum of parallelohedra of smaller dimensions. Hence, in particular, $P$ has a free direction and therefore satisfies the Voronoi conjecture.

\noindent {\bf Case 2}  or {\bf Prism-Prism-Pyramid case}. There exists $F$ with $pr(F) = 2$. By Lemma \ref{lem:pr-pr-py}, the main result of Section \ref{sec:pr-pr-py}, $P$ has a free direction and therefore satisfies the Voronoi conjecture. 

\noindent {\bf Case 3}   or {\bf Prism-Pyramid-Pyramid case}. There exists $F$ with $pr(F) = 1$. By Lemma \ref{lem:pr-py-py}, the main result of Section \ref{sec:pr-py-py}, at least one of the three sides of $F$ gives a free direction for $P$. Therefore $P$ satisfies the Voronoi conjecture.

\noindent {\bf Case 4}   or {\bf Pyramid-Pyramid-Pyramid case}. For every triangular face $F \subset P$ whose dual cell $\mathcal D(F)$ is a triangular prism it holds that $pr(F) = 0$. Then, by Lemma \ref{cor:py-py-py}, the main result of Section~\ref{sec:py-py-py}, $P$ necessarily admits a canonical scaling or has a free direction. In both cases $P$ satisfies the Voronoi conjecture.

The proof is now finished, since all possible cases are considered.
\end{proof}

Among all classification results that immediately follow, we mention here one particular corollary of Theorem~\ref{thm:main} that the list of Dirichlet-Voronoi parallelohedra from \cite{5dim}, i.e. the list of all combinatorial classes of polytopes that appear as Voronoi polytopes of five-dimensional lattices, is the complete list of combinatorial types of five-dimensional parallelohedra.

\begin{corollary}\label{cor:number}
There are exactly 110~244 combinatorial types of parallelohedra in $\R^5$.
\end{corollary}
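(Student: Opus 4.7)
The plan is to deduce Corollary \ref{cor:number} as an immediate consequence of Theorem \ref{thm:main} together with the enumeration carried out in \cite{5dim}. The key observation is that affine transformations preserve combinatorial type, so the combinatorial classification of five-dimensional parallelohedra reduces to the combinatorial classification of five-dimensional Dirichlet-Voronoi parallelohedra.

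First, I would invoke Theorem \ref{thm:main}: for any five-dimensional parallelohedron $P$ there exists a five-dimensional lattice $\Lambda$ and an affine transformation $\mathcal A$ such that $\mathcal A(P)$ is the Dirichlet-Voronoi polytope of $\Lambda$. Since affine images of a polytope share its combinatorial structure (the face lattice is preserved under any nondegenerate affine map), the map sending $P$ to its combinatorial type factors through the class of the Dirichlet-Voronoi parallelohedron $\mathcal A(P)$. Conversely, every Dirichlet-Voronoi parallelohedron of a five-dimensional lattice is, trivially, a five-dimensional parallelohedron. Hence the set of combinatorial types of five-dimensional parallelohedra is in bijection with the set of combinatorial types of Dirichlet-Voronoi parallelohedra of five-dimensional lattices.

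Next, I would cite the enumeration of \cite{5dim}, which establishes that there are exactly $110\,244$ combinatorial types of five-dimensional Dirichlet-Voronoi parallelohedra; this is carried out via the L-type domain decomposition of the cone of positive definite quadratic forms in dimension five, so that each combinatorial type corresponds to a secondary cone of maximal dimension, and the enumeration uses the symmetry group $\GL(5,\Z)$ to pass to equivalence classes. Combining this with the bijection above yields exactly $110\,244$ combinatorial types of five-dimensional parallelohedra.

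The main (and only) obstacle is essentially notational: to be precise about what ``combinatorial type'' means in the two settings and to confirm that the enumeration in \cite{5dim} counts the same objects. This is a matter of citation rather than new mathematical content, so beyond checking that the bijection is well defined and that $\GL(5,\Z)$-equivalence on the level of lattices matches combinatorial equivalence on the level of Voronoi cells (which follows because combinatorially equivalent Voronoi cells share an L-type domain), no further work is required.
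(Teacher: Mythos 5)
Your proposal is correct and follows essentially the same route the paper intends: Corollary \ref{cor:number} is stated as an immediate consequence of Theorem \ref{thm:main} combined with the enumeration of the 110~244 combinatorial types of five-dimensional Dirichlet-Voronoi parallelohedra in \cite{5dim}, the only observation needed being that a nondegenerate affine map preserves the face lattice. One minor inaccuracy in your description of \cite{5dim}: not every combinatorial type corresponds to a secondary cone of \emph{maximal} dimension --- only the primitive parallelohedra do --- and the full count of 110~244 includes types arising on lower-dimensional faces of the secondary cone decomposition, but this does not affect the validity of your deduction, which only uses the final count.
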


\section{Parallelohedra with free direction}\label{sec:free}

In this section we prove that a parallelohedron in $\R^5$ with a free direction satisfies the Voronoi conjecture. Before proving that specific result for the five-dimensional case, we prove a general statement for Voronoi parallelohedra with free directions.

Suppose $P$ is a $d$-dimensional parallelohedron that has at least one free direction. That is, suppose there is a (non-zero) segment $I$ such that both $P$ and $P+I$ are parallelohedra. In that case the projection of $P$ along $I$ is a $(d-1)$-dimensional parallelohedron due to result of Venkov \cite{Ven-width}.

Additionally we need the following notion of a (strong) equivalence for parallelohedra, see \cite[Def. 1.1]{DGM-5dim} and discussion therein.

\begin{definition}\label{def:equiv}
Let $P$ and $P'$ be two $d$-dimensional parallelohedra. We say that $P$ and $P'$ are {\it equivalent}, if there is a combinatorial equivalence $\mathfrak{F}$ between the tilings $\mathcal T_P$ and $\mathcal T_{P'}$ that 
induces a linear isomorphism of $\Lambda_P$ to $\Lambda_{P'}$ restricting $\mathfrak{F}$ to copies of $P$ and $P'$ and then passing that restriction to the centers of the tiles in $\mathcal T_P$ and $\mathcal T_P'$.
\end{definition}

In other words, in addition to combinatorial equivalence of $P$ and $P'$ that can be obtained by restricting $\mathfrak F$ to single full-dimensional tiles of $\mathcal T_P$ and $\mathcal T_{P'}$, we require that the group action of the lattice $\Lambda_P$ (as finitely generated abelian group) on $\mathcal T_P$ transfers to the group action of the lattice $\Lambda_{P'}$ on $\mathcal T_{P'}$. A simplest example for such an equivalence is when $P$ and $P'$ are affinely equivalent. Assuming $P$ and $P'$ are centered at the origin, any affine transformation $\mathcal A$ of $P$ to $P'$ induces a combinatorial equivalence of the corresponding tilings and a linear isomorphism of the corresponding lattices. However, already in $\R^2$, all centrally symmetric hexagons are equivalent in the sense of Definition \ref{def:equiv} but give infinitely many affine classes.

On the other hand, the equivalence as defined above is potentially stronger than combinatorial equivalence of $P$ and $P'$. That is, some combinatorially equivalent parallelohedra could be non-equivalent under Definition \ref{def:equiv}. However, we are unaware of an example of two combinatorially equivalent parallelohedra that are not equivalent in the sense of Definition \ref{def:equiv} and for $d\leq 4$ any combinatorial equivalence between $P$ and $P'$ implies the stronger equivalence in the sense of Definition \ref{def:equiv}.

\begin{theorem}\label{thm:free_voronoi}
If a $d$-dimensional parallelohedron $P$ has a free direction $I$ and the projection of $P$ along $I$ satisfies the Voronoi conjecture, then $P+I$ is equivalent (in the sense of Definition \ref{def:equiv}) to the Voronoi parallelohedron for some $d$-dimensional lattice.
\end{theorem}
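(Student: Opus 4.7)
The plan is to construct the required Voronoi parallelohedron as a right cylinder over the Voronoi cell of the projection $Q := \pi_I(P+I) = \pi_I(P)$, where $\pi_I : \R^d \to \R^d/\lin(I)$ is the projection along $I$; by Venkov's width theorem \cite{Ven-width}, $Q$ is a $(d-1)$-dimensional parallelohedron with tiling $\mathcal{T}_Q$ and lattice $\Lambda_Q = \pi_I(\Lambda_{P+I})$. By hypothesis, $Q$ is affinely equivalent to the Voronoi cell $V(\Lambda')$ of some $(d-1)$-dimensional lattice $\Lambda'$, which yields an affine isomorphism $\mathcal{A}_0$, an induced linear isomorphism $L_0 : \Lambda_Q \to \Lambda'$, and a combinatorial equivalence $\mathfrak{G} : \mathcal{T}_Q \to \mathcal{T}_{V(\Lambda')}$.

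First I would establish the cylindrical structure of $\mathcal{T}_{P+I}$. The projection $\pi_I$ maps $\mathcal{T}_{P+I}$ onto $\mathcal{T}_Q$, and the kernel $\ker\pi_I \cap \Lambda_{P+I}$ is a rank-one lattice $\Z w$ for some $w$ parallel to $I$; hence for each tile $T$ of $\mathcal{T}_Q$, the slab $\pi_I^{-1}(T)$ is partitioned into a $\Z$-indexed stack of tiles of $\mathcal{T}_{P+I}$ related by translations along $w$. The facets of $P+I$ split into side facets parallel to $\lin(I)$ (projecting bijectively to facets of $Q$) and capping facets transverse to $\lin(I)$ (gluing adjacent tiles within a single slab); higher-dimensional faces and dual cells decompose analogously. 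Next, fix a group-theoretic splitting $s : \Lambda_Q \to \Lambda_{P+I}$ of $\pi_I|_{\Lambda_{P+I}}$, so $\Lambda_{P+I} = s(\Lambda_Q) \oplus \Z w$; pick $u \in \R^d$ orthogonal to the affine hull of $\Lambda'$ (viewed inside $\R^d$), set $\Lambda := L_0(\Lambda_Q) \oplus \Z u$, and observe that $V(\Lambda) = V(\Lambda') \times [-u/2, u/2]$ is a right cylinder. Extend $L_0$ to a linear isomorphism $L : \R^d \to \R^d$ by $L(w) := u$ together with the natural lift of $L_0$ on $s(\Lambda_Q)$.

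The final step would be to verify that the assignment $\mathfrak{F} : P+I+\tilde{x} \mapsto V(\Lambda) + L(\tilde{x})$ extends to a combinatorial equivalence between $\mathcal{T}_{P+I}$ and $\mathcal{T}_{V(\Lambda)}$ inducing $L$ on tile centers. Each face of $\mathcal{T}_{P+I}$ corresponds canonically to a face of $\mathcal{T}_{V(\Lambda)}$ via the two cylindrical decompositions: a side face uses $\mathfrak{G}$ applied to its projection, crossed with the full $w$-segment, while a capping face uses its horizontal slice representation. The main obstacle is the first step --- rigorously establishing the cylindrical structure of $\mathcal{T}_{P+I}$ and the face-bijections between side or capping features of $P+I$ and the corresponding data in $\mathcal{T}_Q$. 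A subtle consequence is that $\Lambda_{P+I}$ can be geometrically ``sheared'' relative to the orthogonal direct sum $\Lambda$, so the tilings $\mathcal{T}_{P+I}$ and $\mathcal{T}_{V(\Lambda)}$ need not be affinely equivalent; the verification must therefore show that this shear is combinatorially invisible, and that the section $s$ can be chosen so that the induced linear map of lattices coincides with the $L$ constructed here.
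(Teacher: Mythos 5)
Your proposal rests on a ``cylindrical structure'' of $\mathcal T_{P+I}$ that is false in general, and as a result the polytope you construct is not combinatorially equivalent to $P+I$. The tiles of $\mathcal T_{P+I}$ that project onto a single tile $T$ of $\mathcal T_Q$ do \emph{not} partition the column $\pi_I^{-1}(T)$: only one \emph{layer} of tiles projects onto the tiling $\mathcal T_Q$, while the full projection $\pi_I(\Lambda_{P+I})$ is strictly finer than $\Lambda_Q$, so the projections of all tiles overlap. Already for $d=2$ with $P+I$ a centrally symmetric hexagon and $I$ parallel to one pair of edges, the strip over a segment of $\mathcal T_Q$ is covered by the hexagons of one column together with parts of hexagons from the two adjacent columns; the four edges of $P+I$ transverse to $I$ glue it to tiles in \emph{neighboring} columns, not to the tile above or below it. Consequently your target $V(\Lambda')\times[-u/2,u/2]$, which is combinatorially a prism over $V(\Lambda')$, cannot be the right answer: in the hexagon example it is a rectangle, which is not combinatorially equivalent to a hexagon. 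The ``shear'' you flag at the end is therefore not a technicality to be shown combinatorially invisible --- it genuinely changes the combinatorics, and no choice of section $s$ can repair this.

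The paper avoids the product construction entirely: the equivalent Voronoi parallelohedron is the Dirichlet--Voronoi cell of the lattice $\Lambda_{P+I}$ itself (after an affine map), not of an orthogonal direct sum. Concretely, it decomposes $\Lambda_{P+I}=\bigsqcup_n\Lambda_I^n$ into layers over the $(d-1)$-dimensional sublattice $\Lambda_I$ generated by facet vectors of facets parallel to $I$ (using results of Horv\'ath and of \cite{Mag} that $\Lambda_I$ is saturated and that intersecting tiles lie in the same or consecutive layers), applies an affine map fixing $\lin\Lambda_I$ that makes $I$ orthogonal to it and sends $Q$ to the Voronoi cell of the image of $\Lambda_I$, and then --- this is the step your approach has no counterpart for --- replaces $I$ by $kI$ with $k$ large so that in \emph{both} tilings only consecutive layers interact. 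Faces of the two tilings are then matched as intersections of a face determined by layer $0$ with a face determined by layer $1$, each controlled by the $(d-1)$-dimensional Voronoi tiling of a single layer. If you want to salvage your write-up, you would need to abandon the lattice $L_0(\Lambda_Q)\oplus\Z u$ in favor of $\mathcal A(\Lambda_{P+I})$ and rebuild the face bijection along these layer-intersection lines.
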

\begin{proof}
Let $\mathcal F(I)$ be the set of all facet vectors of $P+I$ with corresponding facets parallel to $I$. According to the result of Horv\'ath \cite{Hor}, the set $\mathcal F(I)$ spans, using integer coefficients, a $(d-1)$-dimensional sublattice $\Lambda_I$ of $\Lambda_{P+I}$. The sublattice $\Lambda_I$  coincides with the intersection $(\lin \Lambda_I) \cap \Lambda_{P+I}$ due to \cite[Lemma 3.3]{Mag} hence $\Lambda_{P+I}$ splits into layers $$\Lambda_{P+I}=\bigsqcup\limits_{n\in\Z}\Lambda_I^n$$ where $\Lambda_I^n=nx+\Lambda_I$ for some fixed $x\in\Lambda_{P+I}$. Also, if two copies of $P+I$ have a non-empty intersection, then their centers belong to the same or consecutive layers due to \cite[Lemma~3.2]{Mag}.

Let $Q$ be the projection of $P+I$ on $\lin \Lambda_I$ along $I$. We apply an affine transformation $\mathcal A$ with invariant subspace $\lin \Lambda_I$ that makes $I$ orthogonal to $\lin \Lambda_I$ and transforms $Q$ into the Dirichlet-Voronoi cell of $\mathcal A(\Lambda_I)$. Such a transformation exists because $Q$ satisfies the Voronoi conjecture by the conditions of this theorem. This transformation does not change the combinatorial types of $P+I$ and $\mathcal T_{P+I}$, or the equivalence class according to Definition \ref{def:equiv}.

First, we notice that for any $\lambda >0$ the polytope $P+\lambda I$ is a parallelohedron and is equivalent to $P+I$ in the sense of Definition \ref{def:equiv}, so we may assume that $I$ is long enough so the affine space $\lin \mathcal A(\Lambda_I^0)$ is tiled by the copies of $\mathcal A(P+I)$ centered at $\mathcal A(\Lambda_I^0)$ and long enough that in the Voronoi tiling of $\mathcal A(\Lambda_{P+I})$ (centers of) polytopes with non-empty intersection belong to the same or to adjacent layers $\mathcal A(\Lambda_I^n)$.  We claim that in this case, the parallelohedron $\mathcal A(P+I)$ is equivalent to the Dirichlet-Voronoi cell $DV_{P+I}$ of the lattice $\mathcal A(\Lambda_{P+I})$. We also can assume that $P+I$ is centered at the origin.

Let $F$ be an $m$-dimensional face of $\mathcal A(P+I)$. If $F$ belongs to copies of $\mathcal A(P+I)$ within only one layer, then the the corresponding copies in the Voronoi tiling of $\Lambda_{P+I}$ intersect by an $m$-dimensional face as well. Thus, it remains to consider the case that $F$ is an intersection of two sets of copies of $\mathcal A(P+I)$ centered in two consecutive layers; without loss of generality we can assume that these layers are $\mathcal A(\Lambda_I^0)$ and $\mathcal A(\Lambda_I^1)$. Let $x_1,\ldots, x_r\in \mathcal A(\Lambda_I^0)$ be the centers in the 0th layer and $y_1,\ldots, y_t\in \mathcal A(\Lambda_I^1)$ be the centers in the 1st layer; here $r,t\geq 1$.

Copies of $\mathcal A(Q)$ centered at $\mathcal A(\Lambda_I^0)$ give the Voronoi tiling of $\mathcal A(\Lambda_I^0)$. Therefore, the copies of $\mathcal A(Q)$ centered at $x_1,\ldots,x_r$ intersect at a face $F_I^0$ of this Voronoi tiling. Moreover, all copies incident to $F_I^0$ in the Voronoi tiling of $\mathcal A(\Lambda_I^0)$  have centers among $x_1,\ldots,x_r$ as these points of $\mathcal A(\Lambda_I^0)$ are closest to every point of $F_I^0$, Similarly, $F_I^1$ is a face of the Voronoi tiling of $\mathcal A(\Lambda_I^1)$ given by the intersection of copies of $\mathcal A(Q)$ centered at $y_1,\ldots,y_t$.

In the Voronoi tiling of $\mathcal A(\Lambda_{P+I})$, the copies of $DV_{P+I}$ centered at $x_1,\ldots, x_r$ intersect at a face $F^0$ that is projected onto $F_I^0$ along $I$; also $F_I^0$ is a subset of $F^0$. Similarly, the copies centered at $y_1,\ldots, y_t$ intersect at a face $F^1$ that is projected onto $F_I^1$ along $I$. The faces $F^0$ and $F^1$ must intersect between the two layers as $F^0+I\cdot \R$ and $F^1+I\cdot \R$ both contain $F$ and no other polytope of the Voronoi tiling of $\mathcal A(\Lambda_{P+I})$ can reach the intersection of $F^0+I\cdot \R$ and $F^1+I\cdot \R$ between 0th and 1st layers. 

We have constructed a bijection between two face lattices of tilings with copies of $\mathcal A(P+I)$ and $DV_{P+I}$ defined by finite subsets of centers of parallelohedra that induce non-empty intersections. Since these face lattices correspond to cell complexes of the same dimension and the bijection respects incidence of faces, it preserves the dimensions of faces as well. Thus, the faces $F^0$ and $F^I$ above intersect by an $m$-dimensional face of the Voronoi tiling of $\mathcal A(\Lambda_{P+I})$

If we propagate this bijection to all faces of the tiling $\mathcal T_{\mathcal A(P+I)}$ we get that $\mathcal A(P+I)$ is equivalent to the Voronoi cell of $\mathcal A(\Lambda_{P+I})$ in the sense of Definition \ref{def:equiv} as the induced bijection of the lattices is the identity isomorphism.
\end{proof}

Combining the previous theorem with Lemma \ref{lem:p+i}, results of Delone \cite{Del} on 4-dimensional parallelohedra, and results of the author with Dutour Sikiri\'c and Magazinov on five-dimensional combinatorially Voronoi parallelohedra \cite[Thm 1.3]{DGM-5dim}, we get the main result of this section.
{\sloppy

}

\begin{lemma}\label{lem:5-free}
If a five-dimensional parallelohedron $P$ has a free direction then $P$ satisfies the Voronoi conjecture.
\end{lemma}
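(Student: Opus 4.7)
The plan is to apply Theorem~\ref{thm:free_voronoi} and then upgrade its conclusion by invoking the known classification of five-dimensional combinatorially Voronoi parallelohedra, after which Lemma~\ref{lem:p+i} finishes the job. Let $P$ be a five-dimensional parallelohedron with free direction spanned by a segment $I$. First, I would project $P$ along $I$ onto a four-dimensional complement; by the result of Venkov~\cite{Ven-width} cited at the start of this section, the projected polytope $Q$ is itself a parallelohedron, now of dimension four. Since the Voronoi conjecture is known in $\R^4$ by Delone~\cite{Del}, $Q$ is an affine image of a Dirichlet--Voronoi cell for some four-dimensional lattice.

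With this, the hypotheses of Theorem~\ref{thm:free_voronoi} are satisfied for $P$ and its free direction $I$, and the theorem yields that $P + I$ is equivalent, in the strong sense of Definition~\ref{def:equiv}, to the Dirichlet--Voronoi cell of some five-dimensional lattice. My next step would be to invoke the result of Dutour Sikiri\'c and the authors~\cite{DGM-arxiv}, which asserts that every five-dimensional parallelohedron equivalent in the sense of Definition~\ref{def:equiv} to a Dirichlet--Voronoi cell actually satisfies the Voronoi conjecture. Applied to $P+I$, this promotes the tiling-level equivalence to an affine realization, so $P+I$ itself satisfies the Voronoi conjecture. Finally, Lemma~\ref{lem:p+i} immediately converts this into the desired statement for $P$, completing the proof.

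The main obstacle is the third step. Theorem~\ref{thm:free_voronoi} on its own produces only an equivalence of tilings between $P + I$ and a genuine Voronoi parallelohedron, and at this purely combinatorial/lattice level one cannot yet exhibit a quadratic form realizing $P + I$ as a Voronoi cell. Bridging this gap is precisely the purpose of~\cite{DGM-arxiv}, where a case-by-case verification over all combinatorial types of five-dimensional Voronoi parallelohedra is used to promote Definition~\ref{def:equiv}-equivalence to affine equivalence. Without this external input the argument would stall at combinatorial information about $P + I$, which is not enough to deduce the Voronoi conjecture for $P + I$ and hence for $P$.
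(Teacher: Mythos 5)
Your proposal is correct and follows exactly the same route as the paper's proof: project along $I$ and apply Delone's four-dimensional result, invoke Theorem~\ref{thm:free_voronoi} to get equivalence of $P+I$ to a Voronoi parallelohedron, upgrade via~\cite{DGM-arxiv}, and conclude with Lemma~\ref{lem:p+i}. Your closing remark correctly pinpoints the role of~\cite{DGM-arxiv} in bridging the gap from combinatorial equivalence to the affine statement.
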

\begin{proof}
Let $I$ be a segment of a free direction for $P$ so $P+I$ is a parallelohedron. The projection of $P$ along $I$ is a four-dimensional parallelohedron that satisfies the Voronoi conjecture according to \cite{Del}. Thus, $P+I$ is equivalent to a Voronoi parallelohedron for some five-dimensional lattice due to Theorem \ref{thm:free_voronoi}. 

In \cite[Thm. 1.3]{DGM-5dim}, the author with Dutour Sikiri\'c and Magazinov proved that if a five-dimensional parallelohedron is equivalent in the sense of Definition \ref{def:equiv} to the Voronoi polytope of a five-dimensional lattice, then it satisfies the Voronoi conjecture. Therefore, $P+I$ satisfies the Voronoi conjecture. Thus $P$ satisfies the Voronoi conjecture due to Lemma \ref{lem:p+i}.
\end{proof}

\section{Parallelohedra with cubical dual 3-cells}\label{sec:cube} 

Let $P$ be any five-dimensional parallelohedron. In this section we prove that if a $P$ has a dual 3-cell equivalent to a three-dimensional cube, then $P$ has a free direction. In this and further sections we assume that $\Lambda_P=\Z^5$ as this can be achieved using an affine transformation. Recall that in that setting $\Z^5_p$ is the vector space of parity classes and $\Z^5_{1/2}$ is the vector space of half-lattice (or half-integer) points.

\begin{lemma}\label{lem:cube}
If $F$ is a two-dimensional face of $P$ with dual cell $\mathcal{D}(F)$ equivalent to a cube, then every edge of $F$ is a free direction for $P$.
\end{lemma}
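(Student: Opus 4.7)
The plan is to apply Lemma~\ref{lem:free_space}: it suffices to exhibit a $4$-dimensional $\mathbb{F}_2$-subspace $\pi \subset \Lambda_{1/2}$ every class of which lies in $M_{\mathcal{D}(I)}$. I would build $\pi$ in two stages---a $3$-dimensional piece coming from the cube $\mathcal{D}(F)$ itself, and a one-dimensional extension supplied by any vertex of the dual $4$-cell $\mathcal{D}(I)$ that lies outside $\mathcal{D}(F)$.

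For the first stage, label the vertices of $\mathcal{D}(F)$ by $V_{abc}$ with $(a,b,c) \in \{0,1\}^3$; by Lemma~\ref{lem:parity} they occupy $8$ distinct parity classes. Each of the six square $2$-faces of $\mathcal{D}(F)$ is a dual $2$-cell with four vertices, so by the Minkowski-Venkov dichotomy it must be a parallelogram (not a triangle), and opposite sides of a parallelogram are equal facet vectors. Iterating this parallelogram identity over the squares of the cube yields the affine structure $V_{abc} = V_{000} + a v_1 + b v_2 + c v_3$ with $v_1 = V_{100} - V_{000}$, $v_2 = V_{010} - V_{000}$, $v_3 = V_{001} - V_{000}$; because the resulting $8$ combinations $a v_1 + b v_2 + c v_3$ represent all $8$ distinct parity classes of cube vertices shifted by $V_{000}$, the vectors $v_1, v_2, v_3$ are linearly independent modulo $2\Lambda_P$. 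A direct computation of pairwise midpoints $(V_{abc} + V_{a'b'c'})/2 \bmod \Lambda_P$ then gives $M_{\mathcal{D}(F)} = \Pi_{\mathrm{cube}}$, the $3$-dimensional subspace of $\Lambda_{1/2}$ spanned by $v_1/2, v_2/2, v_3/2$.

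For the second stage, observe that $\mathcal{D}(I)$ strictly contains $\mathcal{D}(F)$ (their dimensions are $4$ and $3$ respectively), so we may pick a vertex $W$ of $\mathcal{D}(I)$ lying outside $\mathcal{D}(F)$. Lemma~\ref{lem:parity} applied to $\mathcal{D}(I)$ says that $W$'s parity class differs from those of all $V_{abc}$, i.e., $W - V_{000} \not\equiv a v_1 + b v_2 + c v_3 \pmod{2\Lambda_P}$ for every $(a,b,c) \in \{0,1\}^3$; equivalently, $(W - V_{000})/2 \notin \Pi_{\mathrm{cube}}$, so $\pi := \Pi_{\mathrm{cube}} + \langle (W - V_{000})/2 \rangle$ is genuinely $4$-dimensional. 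The nontrivial coset $(W - V_{000})/2 + \Pi_{\mathrm{cube}}$ rewrites (using $V_{000} \in \Lambda_P$ and the affine structure) as the set of classes $(W + V_{abc})/2 \bmod \Lambda_P$, each of which is the midpoint of the pair $W, V_{abc} \in \mathcal{D}(I)$ and hence belongs to $M_{\mathcal{D}(I)}$; combined with $\Pi_{\mathrm{cube}} = M_{\mathcal{D}(F)} \subseteq M_{\mathcal{D}(I)}$, this shows every class of $\pi$ lies in $M_{\mathcal{D}(I)}$, and Lemma~\ref{lem:free_space} delivers that $I$ is a free direction for $P$. The only genuinely nontrivial step is extracting the affine structure of the cube from the Minkowski-Venkov classification of dual $2$-cells; once that is in hand, the rest is pure $\mathbb{F}_2$-linear algebra.
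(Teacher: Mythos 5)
Your proposal is correct and follows essentially the same route as the paper: both arguments take the three-dimensional subspace of $\Lambda_{1/2}$ formed by the midpoints of the cube, adjoin the coset of midpoints joining a point of $\mathcal D(e)\setminus\mathcal D(F)$ to the cube's vertices (distinct from that subspace by Lemma~\ref{lem:parity}), and feed the resulting four-dimensional subspace into Lemma~\ref{lem:free_space}. The only difference is that you spell out the affine parallelepiped structure of the cube via the parallelogram dual $2$-cells, a step the paper states more briefly.
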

\begin{proof}
Let $e$ be an edge of $F$. Then $\mathcal D(e)$ contains $\mathcal D(F)$ and these two dual cells do not coincide. Let $A$ be any point in $\mathcal D(e)\setminus \mathcal D(F)$.

The points of $\mathcal D(F)$ represent 8 different parity classes within a three-dimensional affine subspace of $\R^5$. Therefore $\mathcal D(F)$ is a three-dimensional affine subspace of $\Z^5_p$ and $M_{\mathcal D(F)}$ is a three-dimensional linear subspace $\pi$ of $\Z^5_{1/2}$.

The parity class of $A$ differs from the parity classes of points of $\mathcal D(F)$ because these points are in one dual cell $\mathcal D(e)$. Therefore the set of 8 midpoints
$$
\pi':=\left\{\left.
\frac{A+X}{2}\right|X\in \mathcal D(F)
\right\}\subset \Z^5_{1/2}$$
is a translation of $\pi$ that differs from $\pi$.

The union $\pi\cup \pi'$ is a four-dimensional linear subspace of $\Z^5_{1/2}$ and $\pi\cup \pi' \subseteq M_{\mathcal D(e)}$. Now Lemma \ref{lem:free_space} for the edge $e$ and subspace $\pi\cup\pi'$ implies that $e$ is a free direction for $P$.
\end{proof}

\begin{corollary}\label{cor:cube}
If a five-dimensional parallelohedron $P$ has a dual 3-cell equivalent to a cube, then $P$ satisfies the Voronoi conjecture.
\end{corollary}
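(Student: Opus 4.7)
The plan is to chain together two results that the preceding text has already established: Lemma \ref{lem:cube} from just above, which converts a cubical dual 3-cell into free directions of $P$, and Lemma \ref{lem:5-free} from Section \ref{sec:free}, which says that any five-dimensional parallelohedron with a free direction satisfies the Voronoi conjecture. The corollary should follow as the formal composition of these two inputs, with essentially no further geometric work.

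First, I would unpack the hypothesis. If $P$ has a dual 3-cell combinatorially equivalent to a cube, then by definition there is a face $F$ of $\mathcal T_P$ of codimension $3$ with $\mathcal D(F)$ a combinatorial cube. Since $\dim P = 5$, this face $F$ is two-dimensional, hence a genuine polygon, and in particular has at least one edge. This puts me exactly in the setting of Lemma \ref{lem:cube}.

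Second, I would apply Lemma \ref{lem:cube} to $F$. Picking any edge $e$ of $F$, the lemma yields that $e$ spans a free direction for $P$, i.e.\ there exists a nonzero segment $I$ parallel to $e$ such that $P + I$ is again a five-dimensional parallelohedron.

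Finally, I would invoke Lemma \ref{lem:5-free} applied to $P$ together with the free direction $I$, which immediately yields that $P$ satisfies the Voronoi conjecture. There is no real obstacle to address here: the two substantive steps, namely producing a free direction from the cubical dual cell and descending the Voronoi property from $P + I$ back to $P$, are exactly the content of the two lemmas invoked, so the corollary is just their composition.
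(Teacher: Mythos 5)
Your proof is correct and matches the paper's intended argument: the corollary is stated without a separate proof precisely because it is the immediate composition of Lemma \ref{lem:cube} (an edge of the face with cubical dual cell is a free direction) with Lemma \ref{lem:5-free} (a five-dimensional parallelohedron with a free direction satisfies the Voronoi conjecture). No gaps.
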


\section{Parallelohedra with prismatic dual 3-cells and their properties}\label{sec:prism}

In this section we prove that if a five-dimensional parallelohedron $P$ has the dual 3-cell of a face $F$ equivalent to triangular prism, then $P$ has a free direction or $F$ is a triangle. Moreover we show that the dual cells of edges of $F$ are equivalent (not only as cell complexes but as geometrical vertex sets with inherited face structure) to prisms over tetrahedra or to pyramids over triangular prisms unless $P$ has a free direction.

Suppose $\mathcal D(F)=XYZX'Y'Z'$ where $XYZ$ and $X'Y'Z'$ are the bases of the prism, and $\overrightarrow{XX'}=\overrightarrow{YY'}=\overrightarrow{ZZ'}$. We note that the three-dimensional affine subspace of $\Z^5_p$ spanned by $\mathcal D(F)$ contains parity classes of $X,Y,Z,X',Y',Z',X+Y+Z$, and $X'+Y'+Z'$ and hence dual cells of edges and vertices of $F$ contain only the prism $XYZX'Y'Z'$ in this affine span due to Lemmas \ref{lem:parity} and \ref{lem:3+1}.

\begin{lemma}\label{lem:triangle}
The parallelohedron $P$ has a free direction or $F$ is a triangle.
\end{lemma}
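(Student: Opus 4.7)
The plan is to apply Lemma \ref{lem:free_space} to an edge $I$ of $F$ and thereby exhibit $I$ as a free direction of $P$. The starting midpoint subspace is $M_{\mathcal D(F)}$: the parity classes of the six vertices of the prism, together with the classes $[X+Y+Z]$ and $[X'+Y'+Z']$, form an 8-element affine subspace of $\Lambda_p$, and the last two classes are excluded from any supercell of $\mathcal D(F)$ by Lemmas \ref{lem:parity} and \ref{lem:3+1}. Consequently $M_{\mathcal D(F)}$ is the 3-dimensional subspace $T\subset\Lambda_{1/2}$ spanned by $[X+Y]$, $[Y+Z]$, and $[v]$, where $v = \overrightarrow{XX'}$; this $T$ is what I plan to enlarge to a 4-dimensional $\pi$.

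Pick any extra vertex $W\in\mathcal D(I)\setminus\mathcal D(F)$. Lemma \ref{lem:midpoints} applied to the facet vector $\overrightarrow{XX'}$ forces any vertex of $\mathcal D(I)$ of parity class $[W+v]$ to equal $W+v$ as a lattice vector, so the extra vertices of $\mathcal D(I)$ appear alone or in $\pm v$-paired twins. With $u = [W+X]$, the pairwise midpoints between $W$ (and $W+v$ when present) and the six prism vertices sweep out six of the eight classes in the coset $u+T$, missing precisely $u+[Y+Z] = [W+X+Y+Z]$ and $u+[Y+Z+v] = [W+X+Y+Z+v]$. Hence the 4-dimensional subspace $\pi = T\oplus\langle u\rangle$ is already covered by $M_{\mathcal D(I)}$ except for these two classes.

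The crux of the proof, and the main obstacle, is to show that the two remaining classes correspond to non-facet contact faces of $\mathcal T_P$, so that the hypothesis of Lemma \ref{lem:free_space} is met and $I$ is a free direction. I would argue by contradiction: if one of them, say $[W+X+Y+Z]$, were realized by a facet vector $\overrightarrow{AB}$, then Lemma \ref{lem:translated_pair} applied inside a 5-dimensional supercell of $\mathcal D(I)$ would inject translates of $AB$ into configurations conflicting with Lemma \ref{lem:3+1} applied to appropriate facet-vector triples from $\{X,Y,Z,W\}$. The bookkeeping interlocks with Lemma \ref{lem:edge_cell}, since the admissible combinatorial types for $\mathcal D(I)$ turn out to be precisely the pyramid over a prism and the prism over a tetrahedron, and the two lemmas are best proved in tandem. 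When the verification succeeds for some edge of $F$, $P$ acquires the desired free direction; if instead it fails at every edge of $F$, the resulting rigid facet-class constraints on the cyclic arrangement of the 4-dimensional cells $\mathcal D(e_i)$ around $\mathcal D(F)$ limit the number of such cells to at most three, forcing $F$ to be a triangle.
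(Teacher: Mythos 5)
Your proposal has a genuine gap at exactly the point you flag as the crux, and the intended repair would not work. The claim that the two missing classes $\left[W+X+Y+Z\right]$ and $\left[W+X+Y+Z+v\right]$ correspond to non-facet contact faces is not something that can be extracted from a single edge $I$ and its dual cell: when $F$ \emph{is} a triangle and $\mathcal D(I)$ is a pyramid over the prism or a prism over a tetrahedron, nothing prevents those two classes from being facet classes, and indeed if your claim held in general then every edge of every face with prismatic dual cell would be a free direction, which would render Sections \ref{sec:pr-pr-pr}--\ref{sec:py-py-py} of the paper unnecessary (in the Pyramid-Pyramid-Pyramid case the paper only obtains a canonical scaling, not a free direction). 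The paper never argues that these classes are non-facet; instead it fills them in with \emph{additional midpoints}, and the source of those midpoints is the step your proposal omits entirely: a pigeonhole argument. If $F$ is an $n$-gon with $n\geq 4$, each of the $n$ dual cells $\mathcal D(e_i)$ contributes an extra point $A_i$ lying in one of only three cosets of $\pi_F$ in $\Z_p^5$ other than $\pi_F$ itself, so two extra points $A_i,A_j$ land in the same coset. The whole proof then analyzes the \emph{interaction} of the two cells $\mathcal D(e_i)$ and $\mathcal D(e_j)$: either one cell already contains a second extra point whose midpoints with the prism vertices supply the two missing classes (giving a free direction directly via Lemma \ref{lem:free_space}), or a chain of applications of Lemma \ref{lem:translated_pair} forces coincidences of midpoints that are geometrically impossible, or forces $e_i\parallel e_j$ with $M_{\mathcal D(e_i)}\cup M_{\mathcal D(e_j)}$ covering a hyperplane of $\Z^5_{1/2}$, which yields a free direction by the argument of Lemma \ref{lem:free_space} applied to a union of two midpoint sets rather than one.

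Two smaller points. First, your assertion that the extra vertices of $\mathcal D(I)$ appear ``alone or in $\pm v$-paired twins'' is unjustified: Lemma \ref{lem:midpoints} only constrains a vertex in class $\left[W+v\right]$, and says nothing about extra vertices in other classes of the coset $W+\pi_F$; the paper's Case 1 treats precisely such configurations (and they are the easy, favorable case). Second, your closing sentence --- that failure at every edge imposes constraints limiting the number of cells $\mathcal D(e_i)$ to three --- is the statement that actually needs proving, and as written it is asserted rather than argued; the paper's logic runs in the opposite direction, assuming $n\geq 4$ and deriving a free direction or a contradiction from the forced coincidence of cosets.
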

\begin{proof}
Suppose $F$ is not a triangle so $F$ is an $n$-gon for $n\geq 4$. For every edge $e_i, 1\leq i\leq n$ of $F$, the dual cell $\mathcal D(e_i)$ contains an additional vertex $A_i$ in a parity class outside of the three-dimensional affine subspace $\pi_F$ of $\Z^5_p$ spanned by $XYZX'Y'Z'$. The space $\Z^5_p$ is split into four three-dimensional affine planes parallel to $\pi_F$ including $\pi_F$ itself. Since $n\geq 4$ and $A_i\notin \pi_F$, at least two points, say $A_i$ and $A_j$ corresponding to edges $e_i$ and $e_j$ belong to the same translation of $\pi_F$.

Without loss of generality we can assume that the points represent the following parity classes in $\Z^5_p$
\begin{center}
\begin{tabular}{ll}
$X\in[0,0,0,0,0]$,\qquad \qquad & $X'\in[0,0,0,0,1]$,\\
$Y\in[1,0,0,0,0]$,\qquad \qquad & $Y'\in[1,0,0,0,1]$,\\
$Z\in[0,1,0,0,0]$,\qquad \qquad & $Z'\in[0,1,0,0,1]$,\\
\end{tabular}
\end{center}
and $A_i\in[0,0,1,0,0]$. The plane $\pi_F$ is given by the equation $x_3=x_4=0$ and the translation of $\pi_F$ that contains parity classes of $A_i$ and $A_j$ is given by $\pi_F':=\{x_3=1, x_4=0\}$, so $A_j\in[*,*,1,0,*]$ where each $*$ can be $0$ or $1$ independently.

There are three cases for the intersection of $\mathcal D(e_i)$ with the affine space $\pi_F'$ in $\Z^5_p$.

{\bf Case \ref{lem:triangle}.1:} The intersection $\mathcal D(e_i)\cap \pi_F'$ contains two points that differ in a coordinate other than $x_5$. One of these points is $A_i\in[0,0,1,0,0]$; we denote the second one as $A_i'$ and $A_i'$ belongs to the parity class of the form $[1,0,1,0,*]$, $[0,1,1,0,*]$, or $[1,1,1,0,*]$.

The set of midpoints $M_{\mathcal D(F)}$ contains all 8 classes of the form $\langle *,*,0,0,* \rangle$ in $\Z^5_{1/2}$ (here each star is $0$ or $\frac 12$ independently of others). The midpoints of segments connecting $A_i$ with vertices of the prism $XYZX'Y'Z'$ represent 6 of 8 classes of the form $\langle *,*,\frac 12,0,* \rangle$ except $\langle \frac 12,\frac 12,\frac 12,0,0 \rangle$ and $\langle \frac 12,\frac 12,\frac 12,0, \frac 12 \rangle$. These last two classes are present in the midpoints connecting $A_i'$ with the vertices of $XYZX'Y'Z'$ for every possible choice of $A_i'$.

Thus, all points of the four-dimensional linear space $x_4=0$ in $\Z^5_{1/2}$ are in $M_{\mathcal D(e_i)}$ and $e_i$ is a free direction for $P$ according to Lemma \ref{lem:free_space}.

{\bf Case \ref{lem:triangle}.2:} Each intersection $\mathcal D(e_i)\cap \pi_F'$ and $\mathcal D(e_j)\cap \pi_F'$ contains exactly two points that differ in the coordinate $x_5$. Since one point of the first intersection is $A_i\in[0,0,1,0,0]$, then the second one is $A_i'\in[0,0,1,0,1]$. We can also assume that $\mathcal D(e_j)\cap \pi_F'$ contains exactly two points from parity classes $A_j$ and $A_j'$ and $A_j'\in A_j+[0,0,0,0,1]$ as we can use {\bf Case \ref{lem:triangle}.1} for the edge $e_j$ in case this intersection contains more than two points, or two points with another difference in $\Z^5_p$.

We first look at the dual cell $\mathcal D(e_i)$. The midpoints of $A_iX$ and $A_i'X'$ represent the same class $\langle 0,0,\frac 12,0,0\rangle \in\Z^5_{1/2}$. If these midpoints coincide then $\overrightarrow{A_iA_i'}=\pm\overrightarrow{XX'}$. Otherwise, we use Lemma \ref{lem:translated_pair}  and move the midpoint of $A_i'X'$ onto the midpoint of $A_iX$ and one of the points $A_i'$ or $X'$ must move onto $A_i$ as this is the only point in the plane $\{x_3=1\}\subset \Z^5_p$ of $\mathcal D(e_i)$ other than $A_i'$. If $A_i'$ is moved into $A_i$ then $\overrightarrow{A_iA_i'}=\overrightarrow{XX'}$. If $X'$ is moved into $A_i$ then the midpoints of $A_iA_i'$ and $XX'$ coincide in $\R^5$ and similar argumets applied to midpoints of $A_iY$ and $A_i'Y'$ lead to a contradiction as the midpoints of $XX'$ and $YY'$ are different. Thus $\overrightarrow{A_iA_i'}=\pm\overrightarrow{XX'}$. Without loss of generality we can assume that $\overrightarrow{A_iA_i'}=\overrightarrow{XX'}$ as we can swap points $A_i$ and $A_i'$ and change coordinates ($x_3$ specifically) if needed.

Recall that $A_j\in[*,*,1,0,*]$. Since $\mathcal D(e_j)$ contains $A_j'$ as well we can assume that the fifth coordinate of $A_j$ is $0$. Below we consider all 4 cases for remaining pair of coordinates of $A_j$. By similar arguments we used above, $\overrightarrow{A_jA_j'}=\pm\overrightarrow{XX'}$

{\bf Subcase \ref{lem:triangle}.2.00:} $A_j\in[0,0,1,0,0]$ and $A_j'\in[0,0,1,0,1]$. We note that $A_j\neq A_i$ as in that case the copy of $P$ centered at $A_i$ contains two edges $e_i$ and $e_j$ of $F$ and it must contain $F$ as well, but this is false. 

The midpoints of $A_iX$ and $A_jX$ represent the class $\langle 0,0,\frac 12,0,0\rangle\in\Z^5_{1/2}$ so we can use Lemma \ref{lem:translated_pair} for the cell $\mathcal D(e_i)$ and points $A_j$ and $X$. The translations of $A_j$ and $X$ are in $\mathcal D(e_i)$ and they stay within the plane $x_4=0$ of $\Z^5_p$, so one of translations coincides with $A_i$ or $A_i'$ to get $x_3=\frac12$ for the midpoint.

If any of the points is translated into $A_i'$ then the second one is translated into the point symmetric to $X'$ with respect to $X$ (because $A_iXX'A_i'$ is a parallelogram), but this point does not belong to $\mathcal D(e_i)$. If $A_j$ is translated into $A_i$ then $A_j=A_i$ which is impossible. 

The only possible case is when $X$ is translated into $A_i$ and $A_j$ is translated into $X$. Thus $X$ is the midpoint of $A_iA_j$. The same arguments for midpoints of $A_iY$ and $A_jY$ lead to conclusion that $Y$ is the midpoint of $A_iA_j$ which is impossible.

{\bf Subcase \ref{lem:triangle}.2.10:} $A_j\in[1,0,1,0,0]$ and $A_j'\in[1,0,1,0,1]$. The midpoints of $A_iY$ and $A_jX$ represent the same class $\langle \frac 12,0, \frac12,0,0\rangle\in \Z^5_{1/2}$. We use Lemma \ref{lem:translated_pair} for the cell $\mathcal D(e_i)$ and points $A_j$ and $X$. One of the points $A_j$ or $X$ is translated into $A_i$ or $A_i'$. If any of the translations coincides with $A_i'$ then the second point is translated in the point symmetric to $Y'$ with respect to $Y$ ($A_iYY'A_i'$ is a parallelogram), and this point is not in $\mathcal D(e_i)$. 

If $A_j$ or $X$ is translated into $A_i$ then $\overrightarrow{A_jX}=\pm \overrightarrow {A_iY}$. Similar arguments for the midpoints of $A_iX$ and $A_jY$ that represent the class $\langle 0,0, \frac12,0,0\rangle\in \Z^5_{1/2}$ give that $\overrightarrow{A_jY}=\pm \overrightarrow {A_iX}$. Both equalities can be realized simultaneously only if the midpoints of $A_iA_j$ and $XY$ coincide.

Similar arguments for the midpoints of $A_i'Y'$ and $A_j'X'$ and the midpoints of $A_i'X'$ and $A_j'Y'$ show that the midpoints of $A_i'A_j'$ and $X'Y'$ coincide. Therefore $\overrightarrow{A_jA_j'}=\overrightarrow{A_iA_i}=\overrightarrow{XX'}$ and the midpoints of $A_iA_j'$, $A_i'A_j$ and $XY'$ coincide.

The parallelogram $XYY'X'$ is a dual cell of the tiling $\mathcal T_P$ (it is a face of the prismatic dual 3-cell $XYZX'Y'Z'$). Let $G$ be the 3-dimensional face of $\mathcal T_P$ with $\mathcal D(G)=XYY'X'$. The face $G$ is centrally symmetric with respect to the midpoint of $XY'$. Also, the face $G$ contains $F$ and hence $G$ contains edges $e_i$ and $e_j$. 

Let $e_j'$ be the edge of $G$ symmetric to $e_j$. The dual cell $\mathcal D(e_j')$ is centrally symmetric to $\mathcal D(e_j)$ with respect to the midpoint of $XY'$ and hence $\mathcal D(e_j')$ contains $X$, $Y$, $X'$, $Y'$, $A_i$, and $A_i'$. Let $H$ be the minimal face of $G$ that contains $e_i$ and $e_j'$. The dual cell of $H$ is the intersection of the dual cells $\mathcal D(e_i)$ and $\mathcal D(e_j')$ according to Lemma \ref{lem:intersection} so $\{X,Y,X',Y',A_i,A_i'\}\subseteq \mathcal D(H)$. Thus $\mathcal D(H)$ contains $\mathcal D(G)$ as a proper subset. Since $H$ contains at least two different edges $e_j'$ and $e_i$,  $H$ is a two-dimensional face of $G$.

The edges $e_j$ and $e_j'$ are parallel. If edges $e_i$ and $e_j$ are not parallel, then the line containing $e_i$ intersects both lines containing $e_j$ and $e_j'$, so the two-dimensional planes of faces $F$ and $H$ coincide. This is impossible as $e_j$ and $e_j'$ are opposite edges of $G$ and hence cannot belong to one supporting plane of $G$. Thus, $e_i$ and $e_j$ are parallel. The union of sets of midpoints $M_{\mathcal D(e_i)}$ and $M_{\mathcal D(e_j)}$ contains all classes within $\Z^5_{1/2}$ satisfying $x_4=0$. The arguments similar to the proof of Lemma \ref{lem:free_space} show that every triangular dual 2-face has a facet parallel to $e_i$ (and $e_j$). Lemma \ref{lem:free} implies that edge $e_i$ is a free direction of $P$.

{\bf Subcase \ref{lem:triangle}.2.01:} $A_j\in[0,1,1,0,0]$ and $A_j'\in[0,1,1,0,1]$. This subcase becomes identical to {\bf Subcase \ref{lem:triangle}.2.10} if we swap $Y$ and $Y'$ with $Z$ and $Z'$.

{\bf Subcase \ref{lem:triangle}.2.11:} $A_j\in[1,1,1,0,0]$ and $A_j'\in[1,1,1,0,1]$. This subcase becomes identical to {\bf Subcase \ref{lem:triangle}.2.10} if we swap $X$ and $X'$ with $Z$ and $Z'$.

{\bf Case \ref{lem:triangle}.3:} One of the intersection $\mathcal D(e_i)\cap \pi_F'$ and $\mathcal D(e_j)\cap \pi_F'$ contains exactly one point; without loss of generality we can assume that this intersection is $\mathcal D(e_i)\cap \pi_F'$ and $A_i\in[0,0,1,0,0]$. Recall that $A_j\in[*,*,1,0,*]$; below we consider all 8 cases for unknown coordinates in the parity class of $A_j$.

In most cases below we translate a segment within $\mathcal D(e_j)$ with one endpoint $A_j$ and the other endpoint in $XYZX'Y'Z'$ into the cell $\mathcal D(e_i)$ using Lemma \ref{lem:translated_pair}. Since this segment is not parallel to $\pi_F$ but parallel to $x_4=0$, the translation must have $A_i$ as one of the endpoints.

{\bf Subcase \ref{lem:triangle}.3.000:} $A_j\in[0,0,1,0,0]$. The midpoints of $A_jX$ and $A_iX$ represent the class $\langle 0,0,\frac 12,0,0\rangle\in\Z^5_{1/2}$; using Lemma \ref{lem:translated_pair} for the cell $\mathcal D(e_i)$ and points $A_j$ and $X$ we get that translated copy of $A_jX$ is within $\mathcal D(e_i)$. One of translated points coincides with $A_i$. This cannot be $A_j$ as in that case $A_i=A_j$ and the copy of $P$ centered at $A_i$ would contain two edges of $F$ but not $F$ itself. Thus translation of $X$ is $A_i$ and translation of $A_j$ is $X$. So $X$ is the midpoint of $A_iA_j$. Similar arguments for the midpoints of $A_iY$ and $A_jY$ give that $Y$ is the midpoint of $A_iA_j$ which is a contradiction.

{\bf Subcase \ref{lem:triangle}.3.100:} $A_j\in[1,0,1,0,0]$. The midpoints of $A_jX$ and $A_iY$ represent the class $\langle \frac 12,0,\frac 12,0,0\rangle\in \Z^5_{1/2}$. Using Lemma \ref{lem:translated_pair} we get that the segment $A_jX$ is translated into the segment $A_iY$ so $\overrightarrow{A_jX}=\pm\overrightarrow{A_iY}$. Similar arguments for midpoints of $A_jY$ and $A_iX$ give that $\overrightarrow{A_jY}=\pm\overrightarrow{A_iX}$. Both equalities can be realized simultaneously only if the midpoints of $XY$ and $A_iA_j$ coincide.

Similar arguments for the midpoints of $A_jX'$ and $A_iY'$ and for the midpoints of $A_jY'$ and $A_iX'$ give that the midpoints of $X'Y'$ and $A_iA_j$ coincide which is impossible.

{\bf Subcase \ref{lem:triangle}.3.010:} $A_j\in[0,1,1,0,0]$. This subcase becomes identical to {\bf Subcase \ref{lem:triangle}.3.100} if we swap $Y$ to $Z$ and $Y'$ to $Z'$.

{\bf Subcase \ref{lem:triangle}.3.110:} $A_j\in[1,1,1,0,0]$. This subcase becomes identical to {\bf Subcase \ref{lem:triangle}.3.100} if we swap $X$ to $Z$ and $X'$ to $Z'$.

{\bf Subcase \ref{lem:triangle}.3.001:} $A_j\in[0,0,1,0,1]$. The midpoints of $A_jX$ and $A_iX'$ represent the same class $\langle 0,0,\frac12,0,\frac 12\rangle \in \Z^5_{1/2}$, therefore $\overrightarrow{A_jX}=\pm\overrightarrow{A_iX'}$. Also the midpoints of $A_jX'$ and $A_iX$ represent the same class $\langle 0,0,\frac12,0,0\rangle \in \Z^5_{1/2}$, therefore $\overrightarrow{A_jX'}=\pm\overrightarrow{A_iX}$. This is possible only if the midpoints of $XX'$ and $A_iA_j$ coincide.

Similar arguments for the midpoints of $A_jY$ and $A_iY'$ and for the midpoints of $A_jY'$ and $A_iY$ give that the midpoints of $YY'$ and $A_iA_j$ coincide which is impossible.

{\bf Subcase \ref{lem:triangle}.3.101:} $A_j\in[1,0,1,0,1]$. The midpoints of $A_jX$ and $A_iY'$ represent the same class $\langle \frac 12,0,\frac 12,0,\frac 12\rangle \in \Z^5_{1/2}$ and therefore $\overrightarrow{A_jX}=\pm\overrightarrow{A_iY'}$. Similarly, the midpoints of $A_jY'$ and $A_iX$ represent the same class $\langle 0,0,\frac 12,0,0\rangle \in \Z^5_{1/2}$ and therefore $\overrightarrow{A_jX}=\pm\overrightarrow{A_iY'}$. This is possible only if the midpoints of $A_iA_j$ and $XY'$ coincide.

After that we use the same idea as in {\bf Subcase \ref{lem:triangle}.2.10}. Let $G$ be the three-dimensional face of $\mathcal T_P$ with dual cell $XYY'X'$. Let $e_j'$ be the edge of $G$ symmetric to $e_j$. The dual cell $\mathcal D(e_j')$ contains points $X, Y, Y', X'$ and $A_i$. Let $H$ be the minimal face of $G$ that contains $e_i$ and $e_j'$. By similar arguments, $H$ is a two-dimensional face of $G$ different from $F$. Again, similar arguments show that edges $e_i$ and $e_j$ are parallel.

As in {\bf Subcase \ref{lem:triangle}.2.10}, the union of the sets of midpoints $M_{\mathcal D(e_i)}\cup M_{\mathcal D(e_j)}$ contains all classes of $\Z^5_{1/2}$ satisfying $x_4=0$. This implies that $e_i$ is a free direction of $P$.

{\bf Subcase \ref{lem:triangle}.3.011:} $A_j\in[0,1,1,0,1]$. This subcase becomes identical to {\bf Subcase \ref{lem:triangle}.3.101} if we swap $Y$ to $Z$ and $Y'$ to $Z'$.

{\bf Subcase \ref{lem:triangle}.3.111:} $A_j\in[1,1,1,0,1]$. This subcase becomes identical to {\bf Subcase \ref{lem:triangle}.3.101} if we swap $X$ to $Z$ and $X'$ to $Z'$.

As we see, if $F$ is not a triangle, then in all possible cases $P$ has one edge of $F$ as a free direction. This concludes our proof.
\end{proof}

For the remaining part of the paper, $F$ is a triangle $xyz$ as $P$ has a free direction and satisfies the Voronoi conjecture otherwise. The following two corollaries follow the ideas of the proof of Lemma \ref{lem:triangle} and limit the options for each dual cell and what could be ``additional'' vertices within each of these three dual cells. After that, in the remaining sections we consider all possible cases for dual cells of edges $xy$, $yz$, and $zx$ of $F$. 

\begin{lemma}\label{lem:edge_cell}
Let $e$ be an edge of $F=xyz$ with dual cell $\mathcal D(F)=XYZX'Y'Z'$. The parallelohedron $P$ has a free direction or the dual 4-cell $\mathcal D(e)$ is equivalent to a prism over tetrahedron that has $XYZ$ as its face or to a pyramid over $XYZX'Y'Z'$.
\end{lemma}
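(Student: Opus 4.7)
The plan is to analyze the set of extra vertices $\mathcal A:=\mathcal D(e)\setminus\mathcal D(F)$ by parity class, closely following the coordinate setup and subcase framework of the proof of Lemma \ref{lem:triangle}. After a linear change of basis I place the six parity classes of $\mathcal D(F)$ as in that proof: $X\in[0,0,0,0,0]$, $Y\in[1,0,0,0,0]$, $Z\in[0,1,0,0,0]$, $X'\in[0,0,0,0,1]$, $Y'\in[1,0,0,0,1]$, $Z'\in[0,1,0,0,1]$, so that $\mathcal D(F)$ occupies six of the eight parity classes of the three-plane $\pi_F=\{x_3=x_4=0\}\subset\Z^5_p$. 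Lemma \ref{lem:3+1} applied to the triangles $XYZ$ and $X'Y'Z'$ forbids the remaining two classes $[1,1,0,0,0]$ and $[1,1,0,0,1]$ in $\mathcal D(e)$, so $\mathcal D(e)\cap\pi_F=\mathcal D(F)$ and every point of $\mathcal A$ has its parity class in one of the three translations $\pi_F+v$ with $v\in\{[0,0,1,0,0],\;[0,0,0,1,0],\;[0,0,1,1,0]\}$.

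The first goal will be to show that, unless $P$ admits a free direction, $\mathcal A$ sits inside a single translation $\pi_F+v$ and occupies at most two parity classes there, sharing the first four coordinates. Two vertices of $\mathcal A$ in the same translation whose parity classes differ in coordinate $x_1$ or $x_2$ give, by exactly the midpoint count of Case \ref{lem:triangle}.1, a four-dimensional subspace of $\Z^5_{1/2}$ contained in $M_{\mathcal D(e)}$ (namely $\{x_4=0\}$ for $v=[0,0,1,0,0]$ and analogously in the other two cases), so Lemma \ref{lem:free_space} produces a free direction. To handle $A,B\in\mathcal A$ in distinct translations $\pi_F+v_1$ and $\pi_F+v_2$ I would replay the subcase framework of Cases \ref{lem:triangle}.2 and \ref{lem:triangle}.3 inside the single cell $\mathcal D(e)$: for each choice of parity classes of $A,B$, applying Lemma \ref{lem:translated_pair} to the prism facet vectors $XY,YZ,ZX,XX',YY',ZZ'$ and the segments $AX,AY,\ldots,BX,\ldots$ either forces mutually incompatible identities among the displacement vectors or supplies the midpoint classes needed in $M_{\mathcal D(e)}$ to cover a four-dimensional subspace of $\Z^5_{1/2}$, once again giving a free direction by Lemma \ref{lem:free_space}.

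Once these reductions are in place, Lemma \ref{lem:parity} gives $|\mathcal A|\in\{1,2\}$. If $|\mathcal A|=1$, the single extra vertex $A$ lies off the three-dimensional affine span of $\mathcal D(F)$ (its parity class being outside $\pi_F$), so $\mathcal D(e)$ is the vertex set of a pyramid over the prism $XYZX'Y'Z'$ with apex $A$ and inherits the corresponding face structure from $\mathcal T_P$. If $|\mathcal A|=2$, write $\mathcal A=\{A,A'\}$ with $A'-A\equiv[0,0,0,0,1]\pmod{2\Lambda_P}$; then the midpoints of $AA'$ and of the facet vector $XX'$ represent the same class in $\Lambda_{1/2}$, and Lemma \ref{lem:midpoints} applied to $XX'$ and to the pair $A,A'\in\mathcal D(e)$ gives $\overrightarrow{AA'}=\pm\overrightarrow{XX'}$. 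After relabeling $A$ and $A'$ if necessary I obtain $\overrightarrow{AA'}=\overrightarrow{XX'}=\overrightarrow{YY'}=\overrightarrow{ZZ'}$, so $\mathcal D(e)=\{X,Y,Z,A\}\cup\{X',Y',Z',A'\}$ is the vertex set of a prism over the tetrahedron $XYZA$ with $XYZ$ as a face.

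The hard part will be the step ruling out extra vertices in two distinct $\pi_F$-translations: each of the three pairings $(v_1,v_2)$ branches into further subcases indexed by the three free parity coordinates $x_1,x_2,x_5$ of $A$ and $B$, and each branch demands its own midpoint and facet-vector argument in the spirit of the detailed subcases within the proof of Lemma \ref{lem:triangle}. Here the two extra vertices both belong to the single dual cell $\mathcal D(e)$ rather than to the cells of adjacent edges, so the rigidity previously supplied by the second edge must be recovered from the triangular faces $XYZ$ and $X'Y'Z'$ of the prism via Lemma \ref{lem:3+1}.
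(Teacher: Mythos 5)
Your overall skeleton matches the paper's (coordinates on $\Z^5_p$, Lemma \ref{lem:3+1} to pin $\mathcal D(e)\cap\pi_F=\mathcal D(F)$, the Case \ref{lem:triangle}.1 midpoint count plus Lemma \ref{lem:free_space} for two extra vertices in one translation, and Lemma \ref{lem:midpoints} to get $\overrightarrow{AA'}=\pm\overrightarrow{XX'}$ — this last step is in fact cleaner than the paper's reuse of Case \ref{lem:triangle}.2). But the step you flag as ``the hard part'' is a genuine gap, and the plan you sketch for it would not go through. If $A\in[0,0,1,0,0]$ and $B^*\in[0,0,0,1,0]$ both lie in $\mathcal D(e)$, then every midpoint of a segment $AV$ with $V$ in the prism lies in a class $\langle *,*,\tfrac12,0,*\rangle$, every midpoint of $B^*W$ lies in $\langle *,*,0,\tfrac12,*\rangle$, the prism's own midpoints lie in $\langle *,*,0,0,*\rangle$, and the single midpoint of $AB^*$ lies in $\langle 0,0,\tfrac12,\tfrac12,0\rangle$: these cosets are pairwise disjoint, so there are no midpoint-class coincidences \emph{inside} $\mathcal D(e)$ for Lemma \ref{lem:translated_pair} to act on, and the $22$ classes obtained do not contain a four-dimensional subspace, so Lemma \ref{lem:free_space} does not apply either. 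The configuration simply cannot be excluded by arguments internal to $\mathcal D(e)$. The paper's proof never faces this situation because it first plays the three edges of the triangle $F$ against one another: each of $\mathcal D(xy)$, $\mathcal D(xz)$, $\mathcal D(yz)$ must contain a point outside $\pi_F$, there are only three nontrivial translations of $\pi_F$ in $\Z^5_p$, so if any one edge-cell met two distinct translations, two \emph{different} edge-cells would have extra points in a common translation, and then Cases \ref{lem:triangle}.2--3 (which genuinely need two edges, e.g.\ the symmetric edge $e_j'$ of the contact face $G$ with $\mathcal D(G)=XYY'X'$) yield a free direction. That cross-edge pigeonhole is the missing idea; your substitute of recovering the rigidity ``from the triangular faces $XYZ$ and $X'Y'Z'$ via Lemma \ref{lem:3+1}'' has no purchase here, since Lemma \ref{lem:3+1} only forbids parity classes of the form $K+L+M$ and says nothing about a class like $[0,0,0,1,0]$.

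A second, more routine omission: the lemma asserts that $\mathcal D(e)$ is \emph{equivalent} to a pyramid or a prism, which by the paper's definition means the face lattice of $\mathcal D(e)$ inside $\mathcal C_P$ agrees with that of the convex hull. Identifying the vertex set (and the parallelism $\overrightarrow{AA'}=\overrightarrow{XX'}$) is not enough; you write that $\mathcal D(e)$ ``inherits the corresponding face structure,'' but this is exactly what the paper spends the second half of each case proving, by exhibiting the six dual $3$-cells ($AXYZ$, $A'X'Y'Z'$, $XYZX'Y'Z'$ and the three side prisms, resp.\ the pyramidal analogues) and checking that every dual $2$-cell lies in exactly two of them. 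This subcell inventory is used later (e.g.\ in Lemma \ref{lem:pr-pr-py}), so it cannot be waved through.
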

\begin{proof}
As in the proof of Lemma \ref{lem:triangle}, let $\pi_F$ be the three-dimensional affine subspace of $\Z^5_p$ spanned by $XYZX'Y'Z'$. Each of dual cells $\mathcal D(xy)$, $\mathcal D(xz)$, and $\mathcal D(yz)$ have a point outside of $\pi_F$. If such points for two dual cells fall in one translated copy of $\pi_F$, then we use the same arguments as in the proof of Lemma \ref{lem:triangle} to show that $P$ has a free direction. Otherwise, the dual cells have additional points in different translations of $\pi_F$.

Without loss of generality suppose $e=xy$. Recall that $\mathcal D(e)$ has exactly six points in $\pi_F$. If $\mathcal D(e)$ has more than two points outside of $\pi_F$ or it has two points in $\pi_F$ with parity classes differing by a vector other than $\pm\overrightarrow{XX'}$, then we can use the arguments from {\bf Case \ref{lem:triangle}.1} of Lemma \ref{lem:triangle} to show that $e$ is a free direction for $P$. Thus, we have only two options $\mathcal D(e)=AXYZA'X'Y'Z'$ ($A$ and $A'$ are outside of $\pi_F$ and $\overrightarrow{AA'}=\overrightarrow{XX'}$ in $\Z^5_p$) or $\mathcal D(e)=AXYZX'Y'Z'$ ($A$ is outside $\pi_F$). 

{\bf Case \ref{lem:edge_cell}.1}: $\mathcal D(e)=AXYZA'X'Y'Z'$. First, we use the arguments from {\bf Case \ref{lem:triangle}.2} of Lemma \ref{lem:triangle} to show that $\overrightarrow{AA'}=\overrightarrow{XX'}$ in $\Z^5$ after possible swap of $A$ and $A'$. So geometrically, $\mathcal D(e)$ is a prism over tetrahedron $AXYZ$ and we need to recover the dual (sub)cells within $\mathcal D(e)$ to complete the proof for that case.

The parallelogram $XYY'X'$ is a dual 2-cell of a three-dimensional face of $\mathcal T_P$. It belongs to two dual 3-cells of two-dimensional faces of $\mathcal T_P$ within $AXYZA'X'Y'Z'$; one of these cells is $XYZX'Y'Z'$. Let $D_1$ denote the second cell that contains the cell $XYY'X'$. The cell $D_1$ contains either $A$ or $A'$ as $D_1$ cannot be a subset of $XYZX'Y'Z'$. The midpoints of $AX'$ and $A'X$ coincide, so in both cases the second point belongs to $D_1$ due to Lemma \ref{lem:translated_pair}. By similar reasons, if $D_1$ contains $Z$ or $Z'$ then it contains the other point and $D_1=\mathcal D(e)$ which is impossible, hence $D_1=AXYA'X'Y'$ is a cell equivalent to a prism over triangle. By similar reasons, the prisms $AXZA'X'Z'$ and $AYZA'Y'Z'$ are also subcells of $\mathcal D(e)$. 

The triangular cell $XYZ$ belongs to two dual 3-cells in $\mathcal D(e)$ as well; one of these 3-cells is $XYZX'Y'Z'$. Let $D_2$ be the second cell. If the cell $D_2$ contains $A'$, then it contains $A$ and $X'$ as well (midpoints of $AX'$ and $A'X$ coincide), but $D_2\cap XYZX'Y'Z'=XYZ$. Hence $D_2$ contains $A$ only and $D_2=AXYZ$, a dual cell equivalent to a tetrahedron. By similar reasons there is a tetrahedral dual 3-cell $A'X'Y'Z'$ within $\mathcal D(e)$. 

Summarizing, we found the following dual 3-cells within $\mathcal D(e)$: $AXYZ$, $A'X'Y'Z'$, $XYZX'Y'Z'$, $AXYA'X'Y'$, $AXZA'X'Z'$, and $AYZA'Y'Z'$. In this list every dual 2-cell belongs to exactly 2 dual 3-cells, hence it is a complete list of dual 3-cells within $\mathcal D(e)$. Thus, $\mathcal D(e)$ is equivalent to a prism over tetrahedron $AXYZ$.
{\sloppy

}

{\bf Case \ref{lem:edge_cell}.2}: $\mathcal D(e)=AXYZX'Y'Z'$. Similarly to the previous case we conclude that $AXYY'X'$ is the subcell (equivalent to a pyramid over parallelogram) of $\mathcal D(e)$ adjacent to $XYZX'Y'Z'$ by the parallelogram cell $XYY'X'$. By the similar reasons the pyramidal cells $AXZZ'X'$ and $AYZZ'Y'$ are subcells of $\mathcal D(e)$.

Also by similar reasons, the cells $AXYZ$ and $AX'Y'Z'$ are the only options for the second subcells of $\mathcal D(e)$ containing $XYZ$ and $X'Y'Z'$, respectively. The complete list of dual 3-cells within $\mathcal D(e)$ now looks as $AXYZ$, $AX'Y'Z'$, $XYZX'Y'Z'$, $AXYY'X'$, $AXZZ'X'$, and $AYZZ'Y'$. Thus, $\mathcal D(e)$ is equivalent to a pyramid over the prism $XYZX'Y'Z'$.
\end{proof}

Before formulating the next lemma we fix coordinate notations for parity classes of some points we have so far. We use these notations in the next lemma and in the next three sections. Recall that $F=xyz$ is a two-dimensional face of $P$ with dual cell $\mathcal D(xyz)=XYZX'Y'Z'$. Let $A,B,C\notin XYZX'Y'Z'$ be three points such that $A\in \mathcal D(xy)$, $B\in \mathcal D(xz)$, and $C\in \mathcal D(yz)$.

Without loss of generality we can assume that the points belong to the following parity classes in $\Z^5_p$
\begin{center}
\begin{tabular}{ll}
$X\in[0,0,0,0,0]$,\qquad \qquad & $X'\in[0,0,0,0,1]$,\\
$Y\in[1,0,0,0,0]$,\qquad \qquad & $Y'\in[1,0,0,0,1]$,\\
$Z\in[0,1,0,0,0]$,\qquad \qquad & $Z'\in[0,1,0,0,1]$.\\
\end{tabular}
\end{center}
In that case the affine span of $\mathcal D(F)$ in $\Z^5_p$ is given by $x_3=x_4=0$. 	So the points $A$, $B$, and $C$ belong to affine planes $x_3=1,x_4=0$; $x_3=0, x_4=1$; and $x_3=x_4=1$ in $\Z^5_p$ (maybe not respectively). 

Without loss of generality we can assume that $A\in [0,0,1,0,0]$ and the dual cell $\mathcal D(xy)$ may contain $A'\in [0,0,1,0,1]$ such that $\overrightarrow{AA'}=\overrightarrow{XX'}$. This follows from the possible structures of the dual cell $\mathcal D(xy)$ described in Lemma \ref{lem:edge_cell} after possible swap of $A$ and $A'$ and change of coordinates. Similarly, we can assume that $B\in [0,0,0,1,0]$ and the dual cell $\mathcal D(xz)$ may contain $B'\in [0,0,0,1,1]$ such that $\overrightarrow{BB'}=\overrightarrow{XX'}$. Finally, $C\in [*,*,1,1,*]$ and the dual cell $\mathcal D(yz)$ may contain $C'\in [*,*,1,1,*]$ such that $\overrightarrow{CC'}=\pm\overrightarrow{XX'}$.

The next lemma eliminates 6 options for $C$ leaving only 2. Particularly, in the previous notations, $C\in [1,1,1,1,0]$ or $C\in [1,1,1,1,1]$.

\begin{lemma}\label{lem:ABC}
Suppose $P$ does not have a free direction. Let $F=xyz$ be a face of $P$ with prismatic dual cell $XYZX'Y'Z'$.

Let $A \in \mathcal D(xy)$, $B \in \mathcal D(xz)$, and $C \in \mathcal D(yz)$ be three points in the corresponding dual cells that are not in $XYZX'Y'Z'$. Then $A+B+C$ represents the parity class of $X+Y+Z$ or $X'+Y'+Z'$.
\end{lemma}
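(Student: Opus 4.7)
My plan is a case analysis ruling out the six parity classes $[c_1, c_2, 1, 1, c_5] \ne [1, 1, 1, 1, 0], [1, 1, 1, 1, 1]$ for $C$. The principal tool is Lemma \ref{lem:midpoints}: in each bad case I would look for facet vectors $KL$ and segments $MN$ sitting in a common dual cell whose midpoints coincide in $\Lambda_{1/2}$, and read off the vector equations $\overrightarrow{KL} = \pm\overrightarrow{MN}$. By Lemma \ref{lem:edge_cell}, the tetrahedra $AXYZ$, $BXYZ$, $CXYZ$ occur as dual $3$-cells within $\mathcal D(xy)$, $\mathcal D(xz)$, $\mathcal D(yz)$ respectively, so all nine segments $AX, AY, AZ, BX, BY, BZ, CX, CY, CZ$ are facet vectors; the matching segments live in the dual $5$-cells $\mathcal D(x) \supseteq \{A, B\}$, $\mathcal D(y) \supseteq \{A, C\}$, and $\mathcal D(z) \supseteq \{B, C\}$.

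To illustrate, suppose $C \in [0, 0, 1, 1, 0]$. A direct parity computation yields
\begin{equation*}
\tfrac{C+X}{2} \equiv \tfrac{A+B}{2},\qquad \tfrac{X+B}{2} \equiv \tfrac{A+C}{2},\qquad \tfrac{X+A}{2} \equiv \tfrac{B+C}{2} \pmod{\Lambda_P},
\end{equation*}
with the right-hand pairs lying in $\mathcal D(x), \mathcal D(y), \mathcal D(z)$ respectively. Three applications of Lemma \ref{lem:midpoints} give $\overrightarrow{CX} = \pm\overrightarrow{AB}$, $\overrightarrow{XB} = \pm\overrightarrow{AC}$, and $\overrightarrow{XA} = \pm\overrightarrow{BC}$. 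Solving each relation for $C$ produces three two-element subsets of $\R^5$,
\begin{equation*}
\{X + A - B,\; X + B - A\},\quad \{A + B - X,\; A + X - B\},\quad \{A + B - X,\; X + B - A\},
\end{equation*}
whose triple intersection is empty, since any common point would force one of $A = B$, $A = X$, or $B = X$, each of which is false.

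For each of the other five bad parity classes I would set up an analogous matching. When $c_5 = 1$ the natural substitutes for $CX, XB, XA$ are $CX', XB', X'A$; here the argument branches on whether the relevant dual $4$-cell from Lemma \ref{lem:edge_cell} is of prism-over-tetrahedron type (so that the primed points $A', B', C'$ are actually present in the appropriate dual $5$-cell to give the matching segment) or of pyramid-over-prism type (so that $CX', XB', X'A$ themselves are facet vectors, edges of the apex of the pyramid). In either subcase the same parity identities drive the argument and the same kind of sign-chasing produces the contradiction. The main obstacle I foresee is organizing this bookkeeping uniformly across the six bad parities and across the prism/pyramid subcases of $\mathcal D(xy), \mathcal D(xz), \mathcal D(yz)$; I expect the threefold cyclic symmetry of $F$ that simultaneously permutes $(A, B, C)$ with $(X, Y, Z)$, together with the swap $(X, Y, Z) \leftrightarrow (X', Y', Z')$, should collapse the essential case work down to a small handful of representatives.
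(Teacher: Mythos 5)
Your overall strategy coincides with the paper's: rule out the six bad parity classes of $C$ one by one via midpoint coincidences in $\Lambda_{1/2}$, exploiting the symmetry in $X,Y,Z$ and the primed/unprimed swap. Your worked case $C\in[0,0,1,1,0]$ is correct, and in fact slightly cleaner than the paper's version: you extract three facet-vector relations and observe the three two-element solution sets for $C$ have empty intersection, whereas the paper uses only two relations to force the midpoints of $AB$ and $CX$ to coincide and then derives a contradiction from $A,B\notin\mathcal D(yz)$. The three cases with $c_5=0$ all go through this way.

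The gap is in the $c_5=1$ cases, specifically in the mixed subcase where exactly one of $\mathcal D(xy)$, $\mathcal D(xz)$ is a prism over a tetrahedron (say $A'\in\mathcal D(xy)$, $B'\notin\mathcal D(xz)$, with $C\in[0,0,1,1,1]$). There your toolkit of ``facet vector versus pair in a common dual cell'' does not close the case: every available application of Lemma \ref{lem:midpoints} --- $CX'$ vs.\ $(A,B)$, $CX$ vs.\ $(A',B)$, $BX'$ vs.\ $(A,C)$, $BX$ vs.\ $(A',C)$, $AX$ vs.\ $(B',C)$ is unavailable since $B'$ is absent --- is \emph{consistent} with the single solution $C=A+X'-B$, so no contradiction arises from sign-chasing alone. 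The relation you would want for the pair $(B,C)$, whose midpoint class is $\langle 0,0,\tfrac12,0,\tfrac12\rangle$, cannot come from a facet vector of $\mathcal D(xy)$: in the prism $AXYZA'X'Y'Z'$ the only segments in that class are $AX'$ and $A'X$, which are diagonals of the parallelogram dual $2$-cell $AXX'A'$, not facet vectors. The missing idea (which the paper supplies) is to use that parallelogram as a \emph{contact} cell: $C=A+X'-B$ forces the midpoint of $BC$ to be the center of the contact face with dual cell $AXX'A'$, which lies in the relative interior of that face and hence cannot belong to $P+B$ since $B\notin\{A,X,X',A'\}$; equivalently, Lemma \ref{lem:translated_pair} applied to $\mathcal D(xy)$ forces $\overrightarrow{BC}=\pm\overrightarrow{AX'}$ or $\pm\overrightarrow{A'X}$, which is incompatible with $C=A+X'-B$. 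Once you add this one non-facet-vector argument to your scheme, the remaining bookkeeping does collapse under the symmetries as you predict.
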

\begin{proof}
Recall that $C\in [*,*,1,1,*]$. Below we show that 6 of 8 cases are impossible.

{\bf Case \ref{lem:ABC}.000}: $C\in[0,0,1,1,0]$. The points $A$ and $C$ belong to the dual cell $\mathcal D(y)$ and $BX$ is an edge (facet vector) of the dual cell $\mathcal D(xz)$ regardless of the type of dual cell of $xz$ from Lemma \ref{lem:edge_cell}. The midpoints of $AC$ and $BX$ represent the class $\langle 0,0,0,\frac 12,0\rangle \in\Z^5_{1/2}$ and using Lemma \ref{lem:translated_pair} for the cell $BX$ and points $A$ and $C$ we get $\overrightarrow{AC}=\pm\overrightarrow{BX}$. Similarly, $AX$ is an edge of $\mathcal D(xy)$ and $B,C\in \mathcal D(z)$ and midpoints of $AX$ and $BC$ represent $\langle 0,0,\frac 12,0,0 \rangle\in \Z^5_{1/2}$, and therefore $\overrightarrow{BC}=\pm\overrightarrow{AX}$. This is only possible if the midpoints of $AB$ and $CX$ coincide.

The cell $\mathcal D(yz)$ contains points $C$ and $X$ so Lemma \ref{lem:translated_pair} for this cell and points $A,B\in \mathcal D(x)$ implies that $\mathcal D(yz)$ contains $A$ and $B$ which is false.

{\bf Case \ref{lem:ABC}.100}: $C\in[1,0,1,1,0]$. This case becomes identical to {\bf Case \ref{lem:ABC}.000} if we swap $X$ with $Y$.

{\bf Case \ref{lem:ABC}.010}: $C\in[0,1,1,1,0]$. This case becomes identical to {\bf Case \ref{lem:ABC}.000} if we swap $X$ with $Z$.

{\bf Case \ref{lem:ABC}.001}: $C\in[0,0,1,1,1]$. Again, the points $A$ and $C$ belong to the dual cell $\mathcal D(y)$ and the midpoints of $AC$ and $BX'$ represent the same class $\langle 0,0,0,\frac 12,\frac 12\rangle\in\Z^5_{1/2}$. We use Lemma \ref{lem:translated_pair} for the dual cell $\mathcal D(xz)$ (which contains $B$ and $X'$) and points $A$ and $C$. After the translation the points $A$ and $C$ will be within $\mathcal D(xz)$. This is possible only if $\overrightarrow{AC}=\pm\overrightarrow{BX'}$ or $\mathcal D(xz)$ contains $B'$ and $\overrightarrow{AC}=\pm\overrightarrow{B'X}$.

If $A'\notin \mathcal D(xy)$ and $B'\notin \mathcal D(xz)$, then $\overrightarrow{AC}=\pm \overrightarrow{BX'}$ and by similar reasons $\overrightarrow{BC}=\pm \overrightarrow{AX'}$ because these midpoints represent the class $\langle 0,0,\frac 12,0,\frac 12\rangle\in\Z^5_{1/2}$, so the midpoints of $AB$ and $CX'$ coincide. The dual cell $\mathcal D(yz)$ contains $C$ and $X'$, so by Lemma \ref{lem:translated_pair} it must contain $A$ and $B$ (both in $\mathcal{D}(x)$) as well which is false.

If $A'\in \mathcal D(xy)$ and $B'\notin \mathcal D(xz)$, then $\overrightarrow{AC}=\pm \overrightarrow{BX'}$. By similar reasons $\overrightarrow{A'C}=\pm \overrightarrow{BX}$ because the midpoints of $A'C$ and $BX$ represent the class $\langle 0,0,0,\frac 12,0\rangle\in\Z^5_{1/2}$. This is possible only if the midpoints of $BC$ and $AX'$ coincide. However the midpoint of $AX'$ in that case belongs to the contact face with dual cell $AXX'A'$, so it cannot belong to either copy of $P$ centered at $B$ or $C$. Similar reasons work if $A'\notin \mathcal D(xy)$ and $B'\in \mathcal D(xz)$.

The last case is if $A'\in \mathcal D(xy)$ and $B'\in \mathcal D(xz)$. Then the midpoints of $A'C$ and $BX$ represent the class $\langle 0,0,0,\frac 12,0\rangle\in\Z^5_{1/2}$, so $\overrightarrow{A'C}=\pm \overrightarrow{BX}$ because $BX$ is an edge of the dual cell $\mathcal D(xz)$. Similarly $\overrightarrow{B'C}=\pm \overrightarrow{AX}$. This is possible only if the midpoints of $CX$ and $A'B$ coincide. The dual cell $\mathcal D(yz)$ contains $C$ and $X$, so by Lemma \ref{lem:translated_pair} it must contain $A'$ and $B$ as well which is false. 

{\bf Case \ref{lem:ABC}.101}: $C\in[1,0,1,1,1]$. This case becomes identical to {\bf Case \ref{lem:ABC}.001} if we swap $X$ to $Y$ and $X'$ to $Y'$.

{\bf Case \ref{lem:ABC}.011}: $C\in[0,1,1,1,1]$. This case becomes identical to {\bf Case \ref{lem:ABC}.001} if we swap $X$ to $Z$ and $X'$ to $Z'$.

The remaining two cases for $C$ are $C \in [1,1,1,1,0]$ and $C \in [1,1,1,1,1]$. For the first option $A+B+C$ and $X+Y+Z$ represent the parity class $[1,1,0,0,0]$ and for the second option $A+B+C$ and $X'+Y'+Z'$ represent the parity class $[1,1,0,0,1]$.
\end{proof}

In the next four sections we consider all possible cases for dual cells of edges of $F$ as outlined in the proof of Theorem \ref{thm:main}. Recall that $pr(F)$ denotes the number of dual 4-cells equivalent to prisms over tetrahedra among the dual cells of edges of $F$. We consider the case $pr(F)=3,2,1,0$ in the next sections, respectively.

\section{Prism-Prism-Prism case}\label{sec:pr-pr-pr}

In this case we assume that the dual cells of the edges $xy$, $xz$, and $yz$ of $F$ are prisms over tetrahedra; that is, we consider the case $pr(F)=3$. The results of Section \ref{sec:prism} imply that it is sufficient consider only the case $\mathcal D(xyz)=XYZX'Y'Z'$, $\mathcal D(xy)=AXYZA'X'Y'Z'$, $\mathcal D(xz)=BXYZB'X'Y'Z'$, and $\mathcal D(yz)=CXYZC'X'Y'Z'$ where 

$$\overrightarrow{XX'}=\overrightarrow{YY'}=\overrightarrow{ZZ'}=\overrightarrow{AA'}=\overrightarrow{BB'}=\pm\overrightarrow{CC'}$$
and the points represent the following parity classes
\begin{center}
\begin{tabular}{ll}
$X\in[0,0,0,0,0]$,\qquad \qquad & $X'\in[0,0,0,0,1]$,\\
$Y\in[1,0,0,0,0]$,\qquad \qquad & $Y'\in[1,0,0,0,1]$,\\
$Z\in[0,1,0,0,0]$,\qquad \qquad & $Z'\in[0,1,0,0,1]$,\\
$A\in[0,0,1,0,0]$,\qquad \qquad & $A'\in[0,0,1,0,1]$,\\
$B\in[0,0,0,1,0]$,\qquad \qquad & $B'\in[0,0,0,1,1]$,\\
\end{tabular}
\end{center}
while $C$ is in $[1,1,1,1,1]$ or in $[1,1,1,1,0]$. Then $C'$ is in $[1,1,1,1,0]$ or in $[1,1,1,1,1]$, respectively.

We use the following ``red Venkov graph'' criterion for $P$ to be decomposable in a direct sum of two parallelohedra of smaller dimensions. This criterion was proved by Ordine in \cite{Ord}; we also refer to \cite{Mag} and \cite{MO} for details.

\begin{definition}
Let $P$ be a $d$-dimensional parallelohedron, $d\geq 2$. We construct the following graph $G_P$ based on the combinatorial structure of the tiling $\mathcal T_P$. 

The vertices of $G_P$ correspond to pairs of opposite facets of $P$ or, equivalently, to equivalence classes of facet vectors of $\mathcal T_P$. Two facet vectors are treated as equivalent if they are parallel. Two vertices $\alpha$ and $\beta$ of $G_P$ are connected with an edge there is a triangular dual 2-face of $\mathcal T_P$ with two edges equivalent to $\alpha$ and $\beta$.

The graph $G_P$ is called the {\it red Venkov graph} of $P$.
\end{definition}

We highlight the following connection between the red Venkov graph, the gain function introduced in Definition \ref{def:gain}, and canonical scaling from Definition \ref{def:scaling}. Two facet vectors $\alpha$ and $\beta$ of $\mathcal T_P$ are connected with an edge in the red Venkov graph if and only if the gain function $\gamma(\alpha,\beta)$ is defined for the two facet vectors as both properties require existence of a triangular dual cell of $\mathcal T_P$ that contains $\alpha$ and $\beta$. We recall that the gain function tracks how a hypothetical canonical scaling would change. Therefore, the red Venkov graph is connected if and only if a value of hypothetical canonical scaling on any one facet of $P$ defines the values on all other facets. We use connection between the gain function and canonical scaling in Section \ref{sec:py-py-py} and in this section we need the following property of the red Venkov graph.


\begin{theorem}[A.~Ordine, {\cite{Ord}}]\label{lem:red}
A parallelohedron $P$ is a direct sum of two parallelohedra of smaller dimension if and only if the graph $G_P$ is disconnected.
\end{theorem}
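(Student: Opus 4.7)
The plan is to prove both implications by translating combinatorial information about $G_P$ into structural information about Minkowski summands of $P$.

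For the direction $(\Rightarrow)$, assume $P = P_1 \oplus P_2$ with $P_i \subset V_i$ and $\R^d = V_1 \oplus V_2$. Every facet of $P$ is of the form $F_1 + P_2$ or $P_1 + F_2$ for a facet $F_i$ of $P_i$, and its outward normal annihilates $V_2$ or $V_1$ respectively. A codimension $2$ face of $P$ comes in three types: $G_1 + P_2$, $P_1 + G_2$ (with $G_i$ codimension $2$ in $P_i$), or the mixed product $F_1 + F_2$ of two facets. In the first two cases the face behaves combinatorially as the corresponding codimension $2$ face of a single factor, so the red edges it produces lie entirely inside $G_{P_1}$ or $G_{P_2}$. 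In the mixed case the face is contained in only the two facets $F_1 + P_2$ and $P_1 + F_2$, and in the tiling exactly four translates of $P$ meet around it in a parallelogram configuration, hence it is a contact (non-primitive) codimension $2$ face and contributes no red edges at all. Therefore $G_P$ decomposes as the disjoint union of $G_{P_1}$ and $G_{P_2}$.

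For the converse, suppose $G_P$ splits into components $A$ and $B$, and let $\mathcal{F}_A,\mathcal{F}_B$ be the corresponding sets of facet vectors. The crucial input is that at every primitive codimension $2$ face exactly three facets meet and their facet vectors satisfy a unique (up to scalar) linear dependence with all three coefficients nonzero; the disconnection hypothesis forces all three facets to lie in a single component. Hence each primitive codimension $2$ face yields a linear relation internal to $\mathrm{span}(\mathcal{F}_A)$ or internal to $\mathrm{span}(\mathcal{F}_B)$. My plan is then to set $V_A := \mathrm{span}(\mathcal{F}_A)$ and $V_B := \mathrm{span}(\mathcal{F}_B)$, show that these subspaces are complementary in $\R^d$, define $P_1$ and $P_2$ as the projections of $P$ onto $V_A$ along $V_B$ and vice versa, verify $P = P_1 + P_2$, and finally confirm via the Minkowski--Venkov conditions that each $P_i$ is itself a parallelohedron of the appropriate dimension.

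The main obstacle is proving the independence $V_A \cap V_B = \{0\}$ and the equality $P = P_1 + P_2$. For independence I would use a belt-walk argument: each belt (each vertex of $G_P$) determines a $2$-plane spanned by its two parallel facet vectors together with the direction of its codimension $2$ faces, and each red edge in $G_P$ forces two adjacent such $2$-planes to share a line, so the accumulated spans within each component of $G_P$ are internally consistent and, by the disconnection, algebraically independent between $A$ and $B$. For the Minkowski sum identity, one checks that every vertex of $P$ is the intersection of a system of facets that separates cleanly into an $A$-subsystem and a $B$-subsystem (since no primitive codimension $2$ face mixes components), so the vertex decomposes uniquely as the sum of a vertex of $P_1$ and a vertex of $P_2$. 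The parallelohedron property of $P_1$ and $P_2$ then follows from the Venkov--McMullen criterion: central symmetry and symmetry of facets are inherited from $P$, and the hexagon-or-parallelogram condition for a codimension $2$ face of $P_i$ descends from the corresponding unmixed codimension $2$ face of $P$ in the relevant component.
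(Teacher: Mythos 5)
The paper does not actually prove this statement: it is quoted from Ordine's thesis \cite{Ord}, with \cite{Mag} and \cite{MO} cited for details, so there is no in-paper argument to compare against. Your forward implication is essentially correct: the three types of codimension-$2$ faces of a direct sum, and the observation that a mixed face $F_1+F_2$ is surrounded by exactly four tiles and hence is non-primitive, do yield $G_P=G_{P_1}\sqcup G_{P_2}$.

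The reverse implication, however, has a genuine gap at exactly the two points you yourself flag as ``the main obstacle,'' and the arguments you sketch do not close it. First, the independence $V_A\cap V_B=\{0\}$: disconnectedness of $G_P$ only tells you that every triangle relation among facet vectors coming from a primitive codimension-$2$ face lives entirely inside $\mathcal F_A$ or entirely inside $\mathcal F_B$; it says nothing about linear relations \emph{between} vectors of $\mathcal F_A$ and vectors of $\mathcal F_B$, and two disjoint sets of lattice vectors with no shared combinatorial relations can perfectly well have overlapping spans. Your belt-walk argument shows at best that each component's span is built up coherently; the phrase ``by the disconnection, algebraically independent between $A$ and $B$'' is an assertion, not a deduction, and this is precisely where the substantive geometric work in \cite{Ord} and \cite{MO} lies. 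Second, the identity $P=P_1+P_2$: for any polytope and any splitting $\R^d=V_A\oplus V_B$ one has $P\subseteq\pi_A(P)+\pi_B(P)$, and equality is exactly the reducibility to be proved, so it cannot simply be ``verified.'' Moreover, your supporting claim that the facets through a vertex of $P$ separate cleanly into an $A$-system and a $B$-system ``since no primitive codimension $2$ face mixes components'' is false as stated: non-primitive codimension-$2$ faces \emph{do} mix components (in the decomposable case the mixed faces $F_1+F_2$ are precisely the non-primitive ones), so a vertex of $P$ is in general incident to facets of both classes, and showing that its normal cone splits as a sum of an $A$-cone and a $B$-cone requires the structural analysis you are trying to bypass. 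The plan is the natural one, but its two load-bearing steps remain unproved.
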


\begin{lemma}\label{lem:pr-pr-pr}
If dual cells $\mathcal D(xy)$, $\mathcal D(xz)$, and $\mathcal D(yz)$ are equivalent to prisms over tetrahedra, then $P$ satisfies the Voronoi conjecture.
\end{lemma}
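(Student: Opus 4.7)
The strategy is to invoke Ordine's Theorem~\ref{lem:red}: I will show that the red Venkov graph $G_P$ is disconnected, whence $P = P_1 \oplus P_2$ is a direct sum of two parallelohedra of smaller dimension. In particular, $P$ will then have a free direction (namely, any edge of $P_1$), and the Voronoi conjecture for $P$ will follow from Lemma~\ref{lem:5-free}.

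Let $v := \overrightarrow{XX'} = \overrightarrow{YY'} = \overrightarrow{ZZ'} = \overrightarrow{AA'} = \overrightarrow{BB'} = \pm\overrightarrow{CC'}$ denote the common ``prism direction''. The key claim is that the vertex of $G_P$ corresponding to the pair of opposite facets with facet vector $\pm v$ is isolated — equivalently, that $v$ is never an edge of a triangular (primitive) dual 2-cell of $\mathcal T_P$. First, within each of the four known dual cells $\mathcal D(xyz), \mathcal D(xy), \mathcal D(xz), \mathcal D(yz)$, every dual 2-subcell containing an edge in direction $v$ is a parallelogram (contact) 2-cell. Indeed, the triangular 2-faces of the prism $XYZX'Y'Z'$ are the bases $XYZ$ and $X'Y'Z'$, neither of which uses a vertical edge; and in each prism over tetrahedron $AXYZA'X'Y'Z'$, $BXYZB'X'Y'Z'$, $CXYZC'X'Y'Z'$, the triangular 2-faces are 2-faces of the tetrahedral 3-cells or triangular bases of prismatic 3-cells, and again omit every $v$-edge, as follows from the explicit enumeration in the proof of Lemma~\ref{lem:edge_cell}.

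Second, suppose for contradiction that a triangular dual 2-cell $\{K, L, M\}$ of $\mathcal T_P$ has $\overrightarrow{KL} = v$ and is not a 2-face of any of the four known dual cells. By translation I take $K = X$ and $L = X'$. The third vertex $M$ is a facet-neighbor of both $X$ and $X'$, so $\overrightarrow{XM}$ and $\overrightarrow{X'M}$ are facet vectors of $P$. I plan a parity-class analysis on $\bar M \in \mathbb F_2^5$: for each admissible class I use Lemmas~\ref{lem:translated_cell} and~\ref{lem:translated_pair} to transport the segment $XM$ into one of the four known dual cells by matching its midpoint class in $\Lambda_{1/2}$; Lemma~\ref{lem:midpoints} then forces $\overrightarrow{XM}$ to coincide up to sign with one of the already-identified facet vectors $\overrightarrow{XY}, \overrightarrow{XZ}, \overrightarrow{AX}, \overrightarrow{BX}, \overrightarrow{CX}$, and so on. The resulting rigidity contradicts either the triangularity of $\{X, X', M\}$, or Lemma~\ref{lem:3+1}, or the two permissible parity classes for $C$ pinned down by Lemma~\ref{lem:ABC}.

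Once $v$ is isolated in $G_P$, Ordine's theorem yields $P = P_1 \oplus P_2$ with $P_1$ a one-dimensional segment in direction $v$, so $v$ is a free direction of $P$, and Lemma~\ref{lem:5-free} gives the Voronoi conjecture. The principal obstacle is the parity-class case analysis in the second step: although Lemma~\ref{lem:parity} together with the Prism--Prism--Prism configuration eliminates most of the $2^5 = 32$ potential classes for $\bar M$ immediately, careful bookkeeping is needed in the cases where $\bar M$ would align with one of the already-named classes of $A, A', B, B', C, C'$, and the reasoning is closely analogous in flavor to the subcase analyses in the proofs of Lemmas~\ref{lem:triangle} and~\ref{lem:ABC}.
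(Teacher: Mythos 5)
Your high-level strategy is exactly the paper's: show that the vertex of the red Venkov graph $G_P$ corresponding to the facet vector $XX'$ is isolated, invoke Theorem~\ref{lem:red} to decompose $P$ as a direct sum, and conclude. However, the entire mathematical content of the lemma lies in proving that $XX'$ belongs to no triangular dual $2$-cell, and on this point your proposal is only a plan, not a proof --- and the plan as stated would not go through. You propose to take a hypothetical triangle $XX'M$, run a case analysis on the parity class of $M$, and for each class transport the segment $XM$ into one of the four known dual cells by matching midpoint classes, so that Lemma~\ref{lem:midpoints} pins down $\overrightarrow{XM}$. The difficulty is that for many parity classes of $M$ the class of the midpoint of $XM$ in $\Lambda_{1/2}$ simply does not occur as a midpoint class inside any of $\mathcal D(xyz)$, $\mathcal D(xy)$, $\mathcal D(xz)$, $\mathcal D(yz)$ (for instance $\langle \frac12,\frac12,\frac12,0,0\rangle$, since the class $X+Y+Z$ is excluded from these cells by Lemma~\ref{lem:3+1}); in such cases there is nothing to transport against, Lemma~\ref{lem:midpoints} gives no information, and your analysis stalls. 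You acknowledge that ``careful bookkeeping is needed'' precisely where the proof has to happen.

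The paper's argument avoids a case analysis on $M$ altogether by a different pivot: if $TXX'$ were a triangular dual $2$-cell, then $TX$ and $TX'$ would both be facet vectors, and their midpoint classes form a pair $\langle a,b,c,d,0\rangle$, $\langle a,b,c,d,\tfrac12\rangle$ with $(a,b,c,d)\neq(0,0,0,0)$, differing only in the last coordinate. It then suffices to exhibit, for each of the $15$ nonzero choices of $(a,b,c,d)$, a parallelogram whose center lies in one of these two classes: either a parallelogram dual $2$-cell (e.g.\ $XYY'X'$, $AXX'A'$, $BXX'B'$, $CXX'C'$, \dots), whose center is a non-facet contact face and hence cannot be the midpoint class of a facet vector, or a geometric parallelogram such as $ABB'A'\subset\mathcal D(x)$, whose center being a facet-vector midpoint would force both diagonals to be facet vectors, which is impossible. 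This is where the specific Prism--Prism--Prism structure (all three cells $\mathcal D(xy)$, $\mathcal D(xz)$, $\mathcal D(yz)$ containing the primed companions $A'$, $B'$, $C'$) is actually used, and it is the ingredient your proposal is missing. A secondary, fixable slip: you assert without argument that the isolated vertex yields a summand $P_1$ that is a segment; the paper does not need this, concluding instead that both summands have dimension at most $4$ and hence satisfy the Voronoi conjecture by the known low-dimensional cases.
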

\begin{proof}
We claim that the vertex of $G_P$ corresponding to the facet vector $XX'$ is an isolated vertex. 

Suppose $XX'$ corresponds to a non-isolated vertex of $G_P$. Then $XX'$ belongs to a triangular dual 2-cell $TXX'$ of $\mathcal T_P$ for some $T\in \Z^5$. The midpoints of facet vectors $TX$ and $TX'$ represent non-zero classes of $\Z^5_{1/2}$ and differ by $\langle 0,0,0,0,\frac12\rangle$. Below we show that for every choice of $a,b,c,d\in\{0,\frac 12\}$ except $a=b=c=d=0$ one of two classes of the form $\langle a,b,c,d,*\rangle$ does not represent a facet vector which gives a contradiction.

The first family of midpoints that are not midpoints of facet vectors comes from parallelogram dual 2-cells within $\mathcal D(xy)=AXYZA'X'Y'Z'$ and $\mathcal D(xz)=BXYZB'X'Y'Z'$.
\begin{center}
\begin{tabular}{ll}
$\langle \frac12,0,0,0,0\rangle$ & is the center of parallelogram cell $XYY'X'$,\\
$\langle 0,\frac12,0,0,0\rangle$ & is the center of parallelogram cell $XZZ'X'$,\\
$\langle \frac12,\frac12,0,0,0\rangle$ & is the center of parallelogram cell $YZZ'Y'$,\\
$\langle 0,0,\frac12,0,0\rangle$ & is the center of parallelogram cell $AXX'A$,\\
$\langle \frac12,0,\frac12,0,0\rangle$ & is the center of parallelogram cell $AYY'A'$,\\
$\langle 0,\frac12,\frac12,0,0\rangle$ & is the center of parallelogram cell $AZZ'A'$,\\
$\langle 0,0,0,\frac12,0\rangle$ & is the center of parallelogram cell $BXX'B$,\\
$\langle \frac12,0,0,\frac12,0\rangle$ & is the center of parallelogram cell $BYY'B'$,\\
$\langle 0,\frac12,0,\frac12,0\rangle$ & is the center of parallelogram cell $BZZ'B'$.\\
\end{tabular}
\end{center}

Similarly, four points $C$, $X$, $X'$, and $C'$ form a parallelogram dual 2-cell within $\mathcal D(yz)=CXYZC'X'Y'Z'$ with the center being the midpoint of $CX$ or the midpoint of $CX'$. In both cases the center does not correspond to a facet vector and has the form $\langle \frac12,\frac12,\frac12,\frac12,*\rangle$. This and two similar parallelograms give three more non-facet midpoints.
\begin{center}
\begin{tabular}{ll}
$\langle \frac12,\frac12,\frac12,\frac12,*\rangle$ & is the center of parallelogram cell with vertices $C$, $X$, $X'$, and $C'$,\\
$\langle 0,\frac12,\frac12,\frac12,*\rangle$ & is the center of parallelogram cell with vertices $C$, $Y$, $Y'$, and $C'$,\\
$\langle \frac12,0,\frac12,\frac12,*\rangle$ & is the center of parallelogram cell with vertices $C$, $Z$, $Z'$, and $C'$.\\
\end{tabular}
\end{center}
The last block of three midpoints comes from points within one dual cell that geometrically form a parallelogram but not necessarily form a dual 2-cell equivalent to parallelogram. The points $A$, $B$, $B'$, and $A'$ all belong to the dual cell $\mathcal D(x)$ and form a parallelogram with center in the midpoint of $AB'$. If this midpoint corresponds to a facet vector, then both $AB'$ and $A'B$ must be facet vectors due to Lemma \ref{lem:translated_cell} which is impossible.
\begin{center}
\begin{tabular}{ll}
$\langle 0,0,\frac12,\frac12,0\rangle$ & is the center of parallelogram with vertices $A,B,B',A'\in \mathcal D(x)$,\\
$\langle \frac12,\frac12,0,\frac12,*\rangle$ & is the center of parallelogram with vertices $A,C,C',A'\in \mathcal D(y)$,\\
$\langle \frac12,\frac12,\frac12,0,*\rangle$ & is the center of parallelogram with vertices $B,C,C',B'\in \mathcal D(z)$.\\
\end{tabular}
\end{center}
Thus, we have treated all 15 cases for $a,b,c,d$.

This implies that $G_P$ is disconnected and hence by Theorem \ref{lem:red}, $P=P_1\oplus P_2$ for some parallelohedra $P_1$ and $P_2$ of dimensions at most 4. The Voronoi conjecture is true for $P_1$ and for $P_2$ and therefore the Voronoi conjecture holds for $P$.
\end{proof}

\begin{remark}
We note that since $P_1$ and $P_2$ are parallelohedra of dimensions at most $4$, both $P_1$ and $P_2$ have free directions. Consequently, in this case $P$ has a free direction as well.
\end{remark}

\section{Prism-Prism-Pyramid case}\label{sec:pr-pr-py}

In this case we assume that the dual cells of the edges $xy$ and $xz$ of $F$ are prisms over tetrahedra and the dual cell of edge $yz$ is a pyramid over triangular prism; this is the case $pr(F)=2$. The results of Section \ref{sec:prism} imply that it is sufficient to consider only the case $\mathcal D(xyz)=XYZX'Y'Z'$, $\mathcal D(xy)=AXYZA'X'Y'Z'$, $\mathcal D(xz)=BXYZB'X'Y'Z'$, and $\mathcal D(yz)=CXYZX'Y'Z'$ where 
$$\overrightarrow{XX'}=\overrightarrow{YY'}=\overrightarrow{ZZ'}=\overrightarrow{AA'}=\overrightarrow{BB'}$$
and the points represent the following parity classes
\begin{center}
\begin{tabular}{ll}
$X\in[0,0,0,0,0]$,\qquad \qquad & $X'\in[0,0,0,0,1]$,\\
$Y\in[1,0,0,0,0]$,\qquad \qquad & $Y'\in[1,0,0,0,1]$,\\
$Z\in[0,1,0,0,0]$,\qquad \qquad & $Z'\in[0,1,0,0,1]$,\\
$A\in[0,0,1,0,0]$,\qquad \qquad & $A'\in[0,0,1,0,1]$,\\
$B\in[0,0,0,1,0]$,\qquad \qquad & $B'\in[0,0,0,1,1]$,\\
\end{tabular}
\end{center}
while $C$ is in $[1,1,1,1,0]$ or in $[1,1,1,1,1]$. The goal of this section is to show that this configuration is impossible unless $P$ has a free direction.

\begin{lemma}\label{lem:pr-pr-py-10}
The dual cell $\mathcal D(x)$ contains exactly $10$ points so $\mathcal D(x)=ABXYZA'B'X'Y'Z'$.
\end{lemma}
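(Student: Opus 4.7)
The containment $\mathcal{D}(x) \supseteq \{X, X', Y, Y', Z, Z', A, A', B, B'\}$ is immediate because $\mathcal{D}(x)$ contains $\mathcal{D}(xy) \cup \mathcal{D}(xz) \cup \mathcal{D}(xyz)$, which equals this ten-point set. It therefore suffices to rule out the existence of an eleventh point $D \in \mathcal{D}(x)$; by Lemma \ref{lem:parity} such a $D$ would have to lie in a parity class distinct from the ten listed.

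The first step is to narrow down the candidate parity classes for $D$ via Lemma \ref{lem:3+1}. Every triangle with all edges being facet vectors inside one of $\mathcal{D}(xy)$, $\mathcal{D}(xz)$, $\mathcal{D}(xyz)$ forbids the parity class equal to the sum of its three vertices. The prism $\mathcal{D}(xyz)$ supplies two such triangles ($XYZ$ and $X'Y'Z'$), and each of the tetrahedral bases of the prisms $\mathcal{D}(xy)$ and $\mathcal{D}(xz)$ supplies four triangles, providing six new forbidden classes per prism. After removing duplicates, fourteen classes are forbidden, which together with the ten occupied classes exhausts twenty-four of the thirty-two classes of $\Z_p^5$. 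The remaining candidate classes for $D$ are exactly the eight classes $[p_1, p_2, 1, 1, p_5]$ with $p_1, p_2, p_5 \in \{0, 1\}$.

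The second step refutes each remaining class. In each of the six classes with $(p_1, p_2) \ne (1, 1)$, the midpoints of $DA$ and $DB$ (or, when $p_5 = 1$, of $DA'$ and $DB'$) fall in $\Lambda_{1/2}$-classes coinciding with the midpoints of specific facet vectors from the prisms $\mathcal{D}(xz)$ and $\mathcal{D}(xy)$, of the form $BT$ and $AT$ with $T \in \{X, X', Y, Y', Z, Z'\}$. Two applications of Lemma \ref{lem:midpoints} combined with a sign-consistency check pin $D$ down geometrically as $D = A + B - T$. Lemma \ref{lem:translated_cell} then upgrades $\{A, D\}$ and $\{B, D\}$ to dual cells of facets. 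To obtain the contradiction, we examine the dual cell $\mathcal{D}(G)$ of the contact face $G = (P + A) \cap (P + B)$: it is centrally symmetric about $(A + B)/2$ and contained in $\mathcal{D}(x)$. The symmetry maps each $P \in \mathcal{D}(x)$ to $A + B - P$, and by Lemma \ref{lem:parity} the only pairs realized by this symmetry among the eleven candidate points of $\mathcal{D}(x)$ are $\{A, B\}$ and $\{X, D\}$. Hence $\mathcal{D}(G)$ is either $\{A, B\}$ or the parallelogram $\{A, B, X, D\}$. The parallelogram alternative would make $G$ a three-dimensional face of $\mathcal{T}_P$ with dual 2-cell $\{A, B, X, D\}$ sharing only the vertex $x$ with $F = xyz$; inspecting the dual 3-cells of 2-dimensional subfaces of $G$ via Lemma \ref{lem:intersection}, using the classification of dual 3-cells together with the standing hypothesis (Section \ref{sec:cube}) that $P$ has no cubical dual 3-cell, rules this out. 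Thus $\mathcal{D}(G) = \{A, B\}$, $AB$ is a facet vector, and Lemma \ref{lem:3+1} applied to the triangle $\{T, A, B\}$ forbids the parity class $T + A + B$, which is precisely the parity class of $D$. This contradicts $D \in \mathcal{D}(x)$.

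The main obstacle is the remaining pair of classes $[1, 1, 1, 1, 0]$ and $[1, 1, 1, 1, 1]$, for which no midpoint of $D$ with any of the ten known vertices of $\mathcal{D}(x)$ lies in a catalogued facet-vector class, so the previous strategy fails directly. Here the plan is to invoke Lemma \ref{lem:translated_pair} with the cell $\mathcal{D}(yz)$: by Lemma \ref{lem:ABC} the apex $C$ of the pyramidal cell $\mathcal{D}(yz)$ lies in one of these same two classes, and the pair $(C, X)$ or $(C, X')$ in $\mathcal{D}(yz)$ then shares a $\Lambda_{1/2}$-midpoint class with $(D, X)$ in $\mathcal{D}(x)$, forcing $\mathcal{D}(x)$ to contain the specific translate $\{(C + D)/2, (D - C)/2\}$ of $\{C, X\}$. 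Iterating analogous propagations across the prisms $\mathcal{D}(xy)$ and $\mathcal{D}(xz)$ will eventually either force an eleventh vertex into one of these prisms, contradicting the prism-over-tetrahedron structure established in Lemma \ref{lem:edge_cell}, or force a coincidence of parity classes in $\mathcal{D}(x)$ forbidden by Lemma \ref{lem:parity}.
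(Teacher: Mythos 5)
Your reduction to eight candidate parity classes is exactly the paper's first step: the ten occupied classes plus the fourteen classes forbidden by Lemma \ref{lem:3+1} (applied to the triangular 2-cells inside the two prisms and $\mathcal D(xyz)$) leave precisely the classes $[*,*,1,1,*]$. From there, however, your argument has genuine gaps in both halves of the case analysis.

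For the six classes with $(p_1,p_2)\neq(1,1)$, pinning the hypothetical point down as $D=A+B-T$ and showing $DA$, $DB$ are facet vectors is fine, but the endgame does not close. The claim that the central symmetry about $\frac{A+B}{2}$ pairs up only $\{A,B\}$ and $\{X,D\}$ inside $\mathcal D(x)$ presupposes that $D$ is the \emph{only} extra point, which is what is being proved; a priori $\mathcal D(G)$ could be a larger centrally symmetric cell (say an octahedron) built from further extra points in other $[*,*,1,1,*]$ classes. More seriously, the parallelogram alternative $\mathcal D(G)=\{A,B,X,D\}$ is dismissed by ``inspecting the dual 3-cells \dots rules this out'' without any actual argument, and I do not see how the classification of dual 3-cells plus the no-cube hypothesis excludes it. It is telling that your six-case argument never uses the apex $C$ of the pyramid $\mathcal D(yz)$, whereas the paper's proof of these cases relies on it essentially: it first produces a second extra point $R'$ via Lemma \ref{lem:translated_cell} applied to the contact 2-cell $AXX'A'$, and then observes that the resulting parallelogram $RYY'R'$ would have its center in the $\Lambda_{1/2}$-class of the midpoint of the facet vector $CZ$ or $CZ'$, which is impossible. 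Finally, for the two classes $[1,1,1,1,*]$ you offer only a plan (``iterating analogous propagations will eventually\dots''), not a proof; the paper's argument here is short and concrete --- if $R$ and $C$ share a parity class then both $X$ and $X'$ would have to be the midpoint of $CR$, and if their classes differ by $[0,0,0,0,1]$ then the midpoint of $CR$ would have to coincide with the midpoints of both $XX'$ and $YY'$ --- and something of that definiteness must replace the asserted iteration.
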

\begin{proof}
Suppose $\mathcal D(x)$ contains an additional point $R$. The point $R$ cannot belong to a parity class of points $A$, $B$, $X$, $Y$, $Z$, $A'$, $B'$, $X'$, $Y'$, or $Z'$. Also we use Lemma \ref{lem:3+1} for 14 triangular dual 2-cells within the dual cell $\mathcal D(x)$ and each triangle forbids a parity class for $R$.
\begin{center}
\begin{tabular}{l|l}
Triangle & Forbidden parity class \\ \hline
$XYZ$ & $X+Y+Z\in [1,1,0,0,0]$\\
$AXY$ & $A+X+Y\in [1,0,1,0,0]$\\
$AXZ$ & $A+X+Z\in [0,1,1,0,0]$\\
$AYZ$ & $A+Y+Z\in [1,1,1,0,0]$\\
$BXY$ & $B+X+Y\in [1,0,0,1,0]$\\
$BXZ$ & $B+X+Z\in [0,1,0,1,0]$\\
$BYZ$ & $B+Y+Z\in [1,1,0,1,0]$\\
$X'Y'Z'$ & $X'+Y'+Z'\in [1,1,0,0,1]$\\
$A'X'Y'$ & $A'+X'+Y'\in [1,0,1,0,1]$\\
$A'X'Z'$ & $A'+X'+Z'\in [0,1,1,0,1]$\\
$A'Y'Z'$ & $A'+Y'+Z'\in [1,1,1,0,1]$\\
$B'X'Y'$ & $B'+X'+Y'\in [1,0,0,1,1]$\\
$B'X'Z'$ & $B'+X'+Z'\in [0,1,0,1,1]$\\
$B'Y'Z'$ & $B'+Y'+Z'\in [1,1,0,1,1]$\\
\end{tabular}
\end{center}

We have eliminated 24 options for the parity class of $R$ (all except 8 points in  the 3-dimensional plane $x_3=x_4=1$ in $\Z^5_p$). The remaining 8 options are studied below.

{\bf Case \ref{lem:pr-pr-py-10}.00110}: $R\in [0,0,1,1,0]$. The midpoints of $AX'$ and $B'R$ represent the parity class $\langle 0,0,\frac 12,0,\frac 12\rangle \in \Z^5_{1/2}$. Therefore, by Lemma \ref{lem:translated_cell}, the midpoint of $B'R$ corresponds to a contact dual 2-cell which is a translation of $AXX'A'$. Thus, the dual cell $\mathcal D(x)$ contains a point $R'\in [0,0,1,1,1]$. Moreover, since $B$ is the only point of its parity class in $\mathcal D(x)$, the translation of $AXX'A'$ is the parallelogram $RBB'R'$ centered at the midpoint of $B'R$ and $\overrightarrow{RR'}=\overrightarrow{BB'}$. 

The points $R$, $Y$, $Y'$, $R'$ form a parallelogram centered at the midpoint of $RY'\in \langle \frac 12,0,\frac 12,\frac 12,0\rangle$. This parallelogram is not necessarily a dual cell, but its center does not belong to a facet vector as in that case both diagonals of this parallelogram are facet vectors and facet vectors cannot intersect. However, the segments $CZ$ and $CZ'$ are facet vectors within the cell $\mathcal D(yz)$ and their midpoints are in classes $\langle \frac 12,0,\frac 12,\frac 12,0\rangle$ and $\langle \frac 12,0,\frac 12,\frac 12,\frac 12\rangle$ for both options for the parity class of $C$ which gives a contradiction.
{\sloppy

}

{\bf Case \ref{lem:pr-pr-py-10}.10110}: $R\in [1,0,1,1,0]$. This case becomes identical to {\bf Case \ref{lem:pr-pr-py-10}.00110} if we swap $X$ and $Y$ and $X'$ and $Y'$ for finding $R'$ and use a similar framework afterwards.

{\bf Case \ref{lem:pr-pr-py-10}.01110}: $R\in [0,1,1,1,0]$. This case becomes identical to {\bf Case \ref{lem:pr-pr-py-10}.00110} if we swap $X$ and $Z$ and $X'$ and $Z'$ for finding $R'$ and use a similar framework afterwards..

{\bf Case \ref{lem:pr-pr-py-10}.00111}: $R\in [0,0,1,1,1]$. The midpoints of $AX'$ and $BR$ represent the parity class $\langle 0,0,\frac 12,0,\frac 12\rangle \in \Z^5_{1/2}$. Thus, by the reasons similar to {\bf Case \ref{lem:pr-pr-py-10}.00110}, the cell $\mathcal D(x)$ contains a point $R'\in [0,0,1,1,0]$ which is impossible by {\bf Case \ref{lem:pr-pr-py-10}.00110}.

{\bf Case \ref{lem:pr-pr-py-10}.10111}: $R\in [1,0,1,1,1]$. This case becomes identical to {\bf Case \ref{lem:pr-pr-py-10}.00111} if we swap $X'$ to $Y'$ and use impossibility of {\bf Case \ref{lem:pr-pr-py-10}.10110}.

{\bf Case \ref{lem:pr-pr-py-10}.01111}: $R\in [0,1,1,1,1]$. This case becomes identical to {\bf Case \ref{lem:pr-pr-py-10}.00111} if we swap $X'$ to $Z'$ and use impossibility of {\bf Case \ref{lem:pr-pr-py-10}.01110}.

{\bf Case \ref{lem:pr-pr-py-10}.1111*}: $R\in [1,1,1,1,0]$ or $R\in [1,1,1,1,1]$. Note that these are the same options for parity classes we have for $C$.

If $C$ and $R$ belong to the same parity class, then midpoints of $CX$ (within $\mathcal D(yz)$) and $RX$ (within $\mathcal D(x)$) belong to the same class in $\Z^5_{1/2}$ and $CX$ is an edge of the cell $\mathcal D(yz)$. Thus by Lemma \ref{lem:translated_pair} $\overrightarrow{RX}=\pm\overrightarrow{CX}$. The points $R$ and $C$ are different as otherwise the copy of $P$ centered at $C$ contains all vertices of $F$ but does not contain $F$ itself, which is impossible. The only other option is when $X$ is the midpoint of $CR$. Similar arguments for midpoints of $CX'$ and $RX'$ show that $X'$ is the midpoint of $CR$ which is a contradiction.

If $C$ and $R$ are in different parity classes then their classes differ by $[0,0,0,0,1]$. Therefore the midpoints of $CX$ and $RX'$ represent the same class in $\Z^5_{1/2}$ and $CX$ is an edge of $\mathcal D(yz)$. Thus, $\overrightarrow{RX'}=\pm\overrightarrow{CX}$. Similar arguments for midpoints of $CX'$ and $RX$ give that $\overrightarrow{RX}=\pm\overrightarrow{CX'}$. This is only possible when midpoints of $XX'$ and $CR$ coincide. Same arguments for the midpoints of $CY$ and $RY'$ and for the midpoints of $CY'$ and $RY$ show that the midpoints of $CR$ and $YY'$ coincide which is a contradiction.
\end{proof}

\begin{lemma}\label{lem:pr-pr-py}
If a triangular face $xyz$ of $P$ with prismatic dual cell has exactly two edges $xy$ and $xz$ with dual cells equivalent to prisms over tetrahedra, then $P$ has a free direction.
\end{lemma}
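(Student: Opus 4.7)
The plan is to show that the edge $xy$ of the triangle $F=xyz$ is a free direction of $P$ by applying Lemma \ref{lem:free_space}. Under the coordinate setup fixed just before Lemma \ref{lem:ABC}, the eight parity classes of the vertices of $\mathcal D(xy)=AXYZA'X'Y'Z'$ are exactly $\{0,e_1,e_2,e_3\}+\{0,e_5\}$ in $\Z^5_p$. A direct computation of all pairwise sums of these classes shows that the midpoint set $M_{\mathcal D(xy)}$ lies entirely in the $4$-dimensional subspace $\pi=\{x_4=0\}$ of $\Z^5_{1/2}$ and covers exactly $14$ of its $16$ classes; the two missing ones are $\langle \frac12,\frac12,\frac12,0,0\rangle$ and $\langle \frac12,\frac12,\frac12,0,\frac12\rangle$, because the parity $[1,1,1,0,0]$ cannot be written as a sum of two elements of weight at most $1$ inside $\{0,e_1,e_2,e_3\}+\{0,e_5\}$.

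The main task is therefore to show that these two missing classes correspond to non-facet contact faces of $P$. Suppose for contradiction that $\langle \frac12,\frac12,\frac12,0,0\rangle$ is the midpoint class of some facet vector $KL$, so that $K+L\equiv (1,1,1,0,0)\pmod{2\Lambda_P}$. The key input is Lemma \ref{lem:pr-pr-py-10}, which pins down $\mathcal D(x)=\{A,B,X,Y,Z,A',B',X',Y',Z'\}$; the ten parity classes $\{0,e_1,e_2,e_3,e_4\}+\{0,e_5\}$ appearing there do not contain $[1,1,1,0,0]$, so no facet with this midpoint class can be incident to $x$. I then plan to combine Lemma \ref{lem:midpoints} with the hypothetical facet vector $KL$ and suitable pairs of points inside $\mathcal D(xy)$, $\mathcal D(xz)$, $\mathcal D(yz)$, or $\mathcal D(x)$ whose midpoints represent the same class: each resulting vector equality $\overrightarrow{KL}=\pm\overrightarrow{MN}$ rules out one combinatorial possibility, in the same spirit as the subcase analyses of Lemma \ref{lem:triangle} (notably Subcase \ref{lem:triangle}.2.10). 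Iterating this yields a contradiction with the complete vertex description of $\mathcal D(x)$. The class $\langle \frac12,\frac12,\frac12,0,\frac12\rangle$ is eliminated by the symmetric argument after interchanging primed and unprimed vertices.

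Once both missing classes are seen to correspond to non-facet contact faces, Lemma \ref{lem:free_space} applied with $I=xy$ and $\pi=\{x_4=0\}$ delivers that $xy$ is a free direction of $P$, which is precisely the conclusion of the lemma. The main obstacle is the case analysis in the previous paragraph: the challenge is to find, for the hypothetical facet vector $KL$, enough pairs of points in already-known dual cells whose midpoints coincide with the putative facet midpoint class so that Lemmas \ref{lem:midpoints} and \ref{lem:translated_pair} yield contradictions, exploiting the sharp restriction on $\mathcal D(x)$ provided by Lemma \ref{lem:pr-pr-py-10}.
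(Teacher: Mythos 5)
Your computation of $M_{\mathcal D(xy)}$ is correct: the eight parity classes of $AXYZA'X'Y'Z'$ yield exactly the $14$ classes of the subspace $\{x_4=0\}$ of $\Z^5_{1/2}$ other than $\langle \frac12,\frac12,\frac12,0,0\rangle$ and $\langle \frac12,\frac12,\frac12,0,\frac12\rangle$, so Lemma \ref{lem:free_space} would indeed apply to $I=xy$ once those two classes are shown not to be classes of facet centers. But that is precisely where the proposal has a genuine gap: the claim is never proved, only announced as a plan, and the tools you name do not carry it out. The observation that $\mathcal D(x)$ contains no point of class $[1,1,1,0,0]$ only shows that no facet \emph{incident to the vertex $x$} has midpoint class $\langle \frac12,\frac12,\frac12,0,0\rangle$; being a facet class is a global property of the tiling, and such a facet could occur away from $x$. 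To run your Lemma \ref{lem:midpoints} argument you need pairs of points inside known dual cells whose midpoint lies in the target class, and among the cells you actually control ($\mathcal D(x)$, $\mathcal D(y)$, $\mathcal D(z)$ and the three edge cells) the only such pair is $\{B,C\}$ or $\{B',C\}$ inside $\mathcal D(z)$, depending on the parity class of $C$. Applying Lemma \ref{lem:midpoints} there merely forces $BC$ (resp.\ $B'C$) to be a facet vector parallel to the hypothetical one, which is not by itself a contradiction, and no further pairs are available to iterate. So the step ``iterating this yields a contradiction with the complete vertex description of $\mathcal D(x)$'' does not go through as described.

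The paper's proof takes a different route that avoids this issue entirely. After establishing $\mathcal D(x)=ABXYZA'B'X'Y'Z'$ (Lemma \ref{lem:pr-pr-py-10}, which you correctly invoke), it considers the $3$-dimensional contact face $G$ with $\mathcal D(G)=XYY'X'$ and successively identifies the $2$-faces of $G$ adjacent to $xyz$ together with the dual cells of the new edges, using Lemmas \ref{lem:intersection}, \ref{lem:edge_cell}, \ref{lem:ABC} and \ref{lem:3+1}; this forces $G$ to be a tetrahedron $xyzt$, contradicting the fact that a contact face must be centrally symmetric. If you want to salvage your direct approach you would need a genuinely new argument excluding a facet vector of midpoint class $\langle \frac12,\frac12,\frac12,0,0\rangle$ or $\langle \frac12,\frac12,\frac12,0,\frac12\rangle$; as it stands, the proposal is a correct reduction plus an unproved central claim.
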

\begin{proof}
Suppose $P$ does not have a free direction. $\mathcal D(x)=ABXYZA'B'X'Y'Z'$ according to Lemma \ref{lem:pr-pr-py-10}. It might be useful to use Figure \ref{pict:pr-py-py} to track dual cells of faces and edges we use in the arguments.

Parallelogram $XYY'X'$ is the dual cell of a face of the tiling $\mathcal T_P$. Let $G$ be this face, so $\mathcal D(G)=XYY'X'$ and $\dim G=3$. In particular, the triangle $F=xyz$ is a face of $G$. Let $H_{xy}$ be the face of $G$ adjacent to $F$ by $xy$. The dual cell of $H_{xy}$ contains the points $X$, $Y$, $Y'$ and $X'$ and is a subcell of $\mathcal D(xy)=AXYZA'X'Y'Z'$. The only such 3-cell other than $\mathcal D(F)$ is $AXYA'X'Y'$ so $\mathcal D(H_{xy})=AXYA'X'Y'$ which is equivalent to triangular prism. Thus $H_{xy}$ is a triangle or $P$ satisfies the Voronoi conjecture due to Lemma \ref{lem:triangle}.

\begin{center}
\begin{figure}[!ht]
\includegraphics[width=0.8\textwidth]{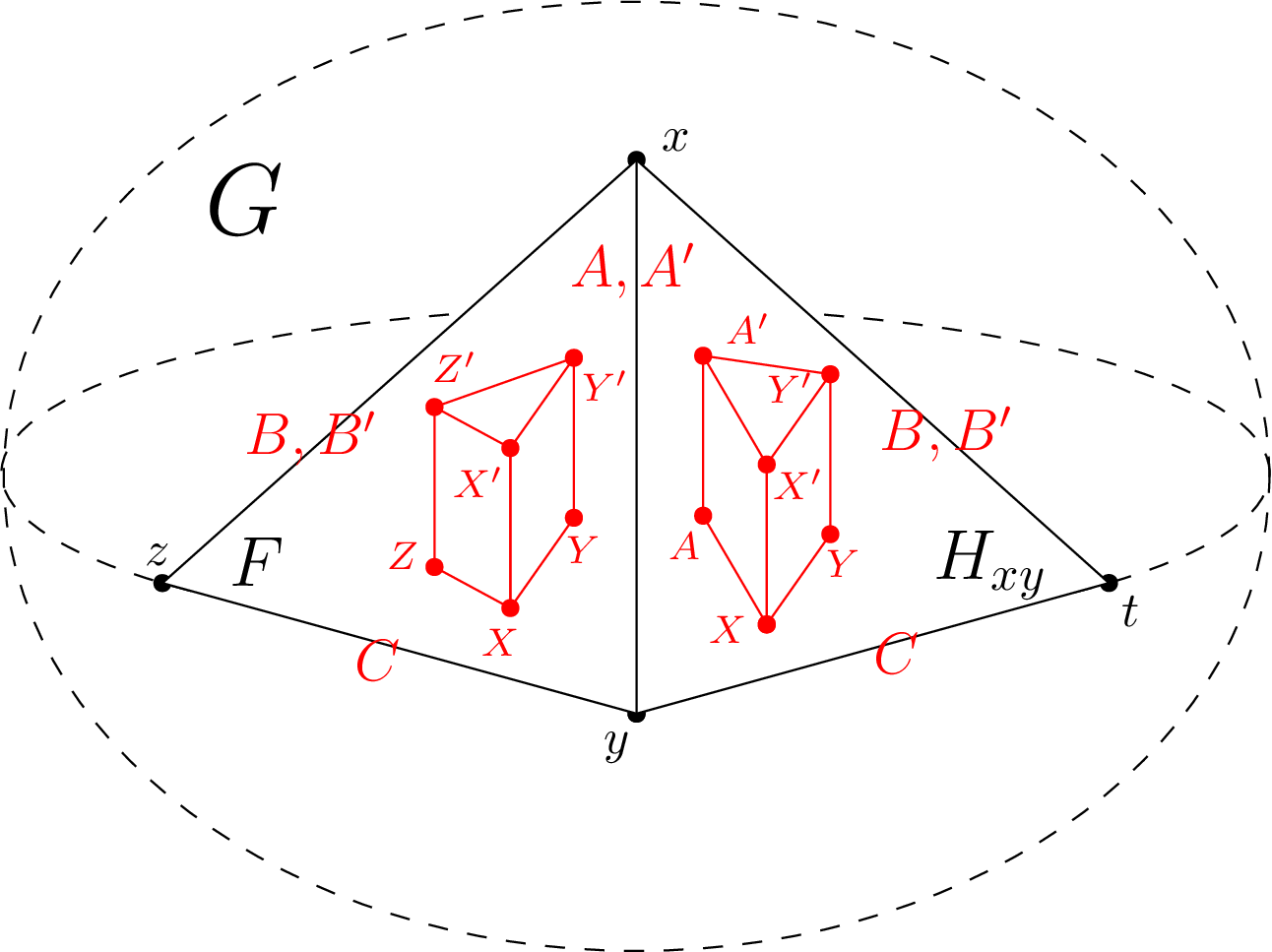}
\caption{An illustration for the proof of Lemma \ref{lem:pr-pr-py}. The face $G$ with $\mathcal D(G)=XYY'X$ and its triangular faces $xyz$ and $xyt$ with prismatic dual cells. We put dual cells of two-dimensional faces inside corresponding triangles and show only additional points corresponding to edges. Dual cells are shown in red.}
\label{pict:pr-pr-py}
\end{figure}
\end{center}

Let $H_{xy}=xyt$ for some point $t$. We look at the dual cell of the edge $xt$. This dual cell contains $AXYA'X'Y'$, the dual cell of $xyt$, and is contained in $ABXYZA'B'X'Y'Z'$, the dual cell of $x$ obtained in Lemma \ref{lem:pr-pr-py-10}. The dual cell $\mathcal D(xt)$ does not contain $Z$ (or $Z'$) as in that case the copy of $P$ centered at $Z$ (or $Z'$) would contain all three vertices of $xyt$ but not the triangle $xyt$ itself. Hence, $\mathcal D(xt)$ contains $B$ or $B'$. Since the intersection $\mathcal D(xz)=BXYZB'X'Y'Z'$ and $\mathcal D(xt)$ is a subcell of both, $\mathcal D(xt)$ contains both $B$ and $B'$ and $\mathcal D(xt)=ABXYA'B'X'Y'$.

The dual cells $\mathcal D(xz)=BXYZB'X'Y'Z'$ and $D(xt)=ABXYA'B'X'Y'$ intersect by the dual 3-cell $BXYB'X'Y'$ and therefore the edges $xz$ and $xt$ belong to a two-dimensional face $H_{xz}$ of $G$ with the dual cell $\mathcal D(H_{xz})=BXYB'X'Y'$. Since this dual cell is equivalent to a prism, the face $H_{xz}$ is a triangle and $H_{xz}=xzt$ unless $P$ has a free direction.

Next we identify the dual cell $\mathcal{D}(yt)$. This cell contains $AXYA'X'Y'$, the dual cell of $xyt$. According to Lemma \ref{lem:edge_cell}, the dual cell of $yt$ contains one additional vertex $R$ or two additional vertices $R$ and $R'$ that differ by $\overrightarrow{XX'}$. Lemma \ref{lem:ABC} for triangle $xyt$ with prismatic dual cell $AXYA'X'Y'$ implies that $Z+B+R=A+X+Y$ or $Z+B+R=A'+X'+Y'$ in $\Z^5_p$. This means that $R\in [1,1,1,1,*]\in \Z^5_p$.

The points $R$, $C$, $X$, and $X'$ are all in the dual cell $\mathcal D(y)$, and $CXX'$ is a triangular dual 2-cell within the cell $\mathcal D(yz)=CXYZX'Y'Z'$ equivalent to a pyramid over triangular prism. Hence by Lemma \ref{lem:3+1}, no point of the parity class $C+X+X'=C+[0,0,0,0,1]$ belong to $\mathcal D(y)$. Thus $R$ represent the same parity class as $C$ and the dual cell of $yt$ does not contain another point $R'$. Also $R=C$ as $\mathcal D(y)$ can contain only one point from the parity class of $C$. So $\mathcal D(yt)=CAXYA'X'Y'$.

Similarly to the edges $xz$ and $xt$, the dual cells $\mathcal D(yz)=CXYZX'Y'Z'$ and $\mathcal D(yt)=CAXYA'X'Y'$ intersect by the dual 3-cell $CXYX'Y'$ and therefore $yz$ and $yt$ belong to a two-dimensional face $H_{yz}$ of $G$ with $\mathcal D(H_{yz})=CXYX'Y'$. Two 2-dimensional faces $H_{yz}$ and $H_{xz}$ of $F$ have two vertices $z$ and $t$ in common, hence $zt$ is an edge of both and $H_{yz}=yzt$. This means that the face $G$ is a tetrahedron $xyzt$ as we identified four triangular faces $F=xyz$, $H_{xy}=xyt$, $H_{xz}=xzt$, and $H_{yz}=yzt$ of $G$.

However the dual cell $\mathcal D(G)$ is $XYY'X'$, so $G$ is a contact face and must be centrally symmetric. Hence $G$ cannot be a tetrahedron.
\end{proof}

\section{Prism-Pyramid-Pyramid case}\label{sec:pr-py-py}

In this case we assume that the dual cell of the edge $xy$ of $F$ is a prism over tetrahedron and the dual cells of edges $xz$ and $yz$ are pyramids over the  prism $XYZX'Y'Z'$; this is the case $pr(F)=1$. The results of Section \ref{sec:prism} imply that it is sufficient to consider only the case $\mathcal D(xyz)=XYZX'Y'Z'$, $\mathcal D(xy)=AXYZA'X'Y'Z'$, $\mathcal D(xz)=BXYZX'Y'Z'$, and $\mathcal D(yz)=CXYZX'Y'Z'$ where 
$$\overrightarrow{XX'}=\overrightarrow{YY'}=\overrightarrow{ZZ'}=\overrightarrow{AA'}$$
and the points represent the following parity classes
\begin{center}
\begin{tabular}{ll}
$X\in[0,0,0,0,0]$,\qquad \qquad & $X'\in[0,0,0,0,1]$,\\
$Y\in[1,0,0,0,0]$,\qquad \qquad & $Y'\in[1,0,0,0,1]$,\\
$Z\in[0,1,0,0,0]$,\qquad \qquad & $Z'\in[0,1,0,0,1]$,\\
$A\in[0,0,1,0,0]$,\qquad \qquad & $A'\in[0,0,1,0,1]$,\\
$B\in[0,0,0,1,0]$,\qquad \qquad & \\
\end{tabular}
\end{center}
while $C$ is in $[1,1,1,1,1]$ or in $[1,1,1,1,0]$. The goal of this section is to show that this configuration is impossible unless $P$ has a free direction; we use a framework similar to the one used in Section \ref{sec:pr-pr-py}.

\begin{lemma}\label{lem:pr-py-py-9}
The dual cell $\mathcal D(x)$ contains exactly $9$ points so $\mathcal D(x)=BAXYZA'X'Y'Z'$ or $P$ has a free direction.
\end{lemma}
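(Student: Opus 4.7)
The plan is to mirror the strategy of Lemma \ref{lem:pr-pr-py-10}: since the nine points $B, A, X, Y, Z, A', X', Y', Z'$ are automatically in $\mathcal{D}(x) \supseteq \mathcal{D}(xy) \cup \mathcal{D}(xz)$, it suffices to assume $\mathcal{D}(x)$ contains a tenth point $R$ and derive either a contradiction or a free direction for $P$. By Lemma \ref{lem:parity}, the parity class of $R$ must differ from those of the nine listed points.

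The first main step is to apply Lemma \ref{lem:3+1} to every triangular dual $2$-cell inside $\mathcal{D}(x)$. Such triangles appear as $2$-faces of the three known subcells of $\mathcal{D}(x)$: the prism $\mathcal{D}(xyz)=XYZX'Y'Z'$ contributes the two base triangles $XYZ$ and $X'Y'Z'$; the prism-over-tetrahedron $\mathcal{D}(xy)$ contributes the eight triangular faces of its two tetrahedral bases, namely $AXY, AXZ, AYZ$ and $A'X'Y', A'X'Z', A'Y'Z'$; and, crucially, because $\mathcal{D}(xz)$ is a pyramid over the prism $XYZX'Y'Z'$ with apex $B$, every edge of the base prism cones to $B$, producing the additional triangles $BXY, BYZ, BXZ$, $BX'Y', BY'Z', BX'Z'$ and $BXX', BYY', BZZ'$. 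A direct enumeration then shows that the forbidden parity classes $K+L+M$ from these triangles, together with the nine parities of existing points, exhaust every parity class in the three affine slices $\{x_3 = x_4 = 0\}$, $\{x_3 = 1, x_4 = 0\}$ and $\{x_3 = 0, x_4 = 1\}$ of $\Z^5_p$. In particular, the cone triangle $BXX'$ already rules out $[0,0,0,1,1]$, so no companion $B'$ can sit in $\mathcal{D}(x)$.

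This leaves only eight candidate parity classes for $R$, all in the remaining slice $\{x_3 = x_4 = 1\}$, and these are handled individually in the same spirit as the eight subcases of Lemma \ref{lem:pr-pr-py-10}. For each candidate I would match the midpoint class in $\Lambda_{1/2}$ of segments such as $AR, A'R, YR, Y'R, ZR, Z'R$ with the midpoint class of a suitable facet vector---the edges $BX, BX', BY, BY', BZ, BZ'$ of the pyramid $\mathcal{D}(xz)$, or the edges $CX, CX', CY, CY', CZ, CZ'$ of the pyramid $\mathcal{D}(yz)$---and apply Lemma \ref{lem:midpoints}. In the six cases in which the parity of $R$ is neither that of $C$ nor that of $C+[0,0,0,0,1]$, the resulting forced equalities such as $\overrightarrow{AR} = \pm\overrightarrow{BX}$ and $\overrightarrow{YR} = \pm\overrightarrow{CZ'}$ either contradict each other directly or combine to force a parallelogram whose center would coincide with the midpoints of two distinct facet vectors, which is impossible. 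The two remaining cases, in which $R$ lies in the parity class of $C$ or of $C+[0,0,0,0,1]$, are handled as in {\bf Case \ref{lem:pr-pr-py-10}.1111$\ast$}: comparison of the facet vectors $CX, CX'$ and $CY, CY'$ with the would-be segments $RX, RX', RY, RY'$ forces the midpoint of $CR$ to coincide simultaneously with the midpoints of $XX'$ and $YY'$, a contradiction.

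The main obstacle is the loss of symmetry in $\mathcal{D}(xz)$ compared to Lemma \ref{lem:pr-pr-py-10}: because $\mathcal{D}(xz)$ is a pyramid rather than a prism, no companion apex $B'$ is available, and the parallelogram-translation argument that produced an auxiliary $R' \in \mathcal{D}(x)$ in Lemma \ref{lem:pr-pr-py-10} has no direct analogue. The compensation is that the pyramid's extra cone triangles $BXX', BYY', BZZ'$ already forbid the class of $B'$ for free, and the case analysis in the slice $\{x_3=x_4=1\}$ must be carried out by more direct applications of Lemma \ref{lem:midpoints} with facet vectors drawn from both $\mathcal{D}(xz)$ and $\mathcal{D}(yz)$. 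I expect the careful bookkeeping across these eight subcases to form the bulk of the proof.
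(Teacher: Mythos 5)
Your overall strategy matches the paper's, but there is a concrete counting error that leaves three cases unhandled. The triangles obtained by coning the edges of the prism $XYZX'Y'Z'$ to the apex $B$ forbid, via Lemma \ref{lem:3+1}, only four distinct parity classes in the slice $\{x_3=0,\,x_4=1\}$: the triangles $BXY$, $BXZ$, $BYZ$ forbid $[1,0,0,1,0]$, $[0,1,0,1,0]$, $[1,1,0,1,0]$; their primed counterparts $BX'Y'$, $BX'Z'$, $BY'Z'$ forbid the same three classes again (since $X'+Y'\equiv X+Y$ in $\Lambda_p$, etc.); and $BXX'$, $BYY'$, $BZZ'$ all forbid the single class $[0,0,0,1,1]$. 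Together with the class of $B$ itself this accounts for only five of the eight classes in that slice, so the classes $[1,0,0,1,1]$, $[0,1,0,1,1]$ and $[1,1,0,1,1]$ survive your enumeration. Your claim that the three slices other than $\{x_3=x_4=1\}$ are exhausted, and hence that only eight candidates for $R$ remain, is therefore false: there are eleven. The paper disposes of the three extra classes by a different mechanism: for $R\in[1,0,0,1,1]$ the midpoint of $RB$ lies in the class $\langle \frac12,0,0,0,\frac12\rangle$, which is the class of the center of the contact parallelogram $XYY'X'$, so Lemma \ref{lem:translated_cell} forces $\mathcal D(x)$ to contain a translate of that parallelogram and in particular a point $R'\in[1,0,0,1,0]$, a class already excluded by the triangle $BXY$; the other two classes are symmetric. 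You would need to add this (or an equivalent) argument.

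A secondary point: in the paper several of the surviving cases in the slice $\{x_3=x_4=1\}$ (e.g. $R\in[0,0,1,1,0]$) do not end in a contradiction but in the conclusion that $yz$ is a free direction, obtained by first showing that $AB$ and $A'B$ cannot be facet vectors and then invoking Lemma \ref{lem:free_space} for the hyperplane $x_3=x_4$ and the cell $\mathcal D(yz)$. Your sketch of the six cases where $R$ is not in the parity class of $C$ or $C+[0,0,0,0,1]$ promises outright contradictions from pairs of forced vector equalities; it is not clear those contradictions actually materialize, so you should expect the disjunct ``$P$ has a free direction'' to be the genuine outcome in some of them, and the argument must be set up to deliver it. The two cases where $R$ matches the parity of $C$ up to $[0,0,0,0,1]$ are handled exactly as you propose.
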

\begin{proof}
Suppose $\mathcal D(x)$ contains an additional point $R$. The point $R$ cannot belong to a parity class of points $B$, $A$, $X$, $Y$, $Z$, $A'$, $X'$, $Y'$, or $Z'$. Also we use Lemma \ref{lem:3+1} for 12 triangular dual 2-cells within the dual cell $\mathcal D(x)$ and each triangle forbids a parity class for $R$.
\begin{center}
\begin{tabular}{l|l}
Triangle & Forbidden parity class \\ \hline
$XYZ$ & $X+Y+Z\in [1,1,0,0,0]$\\
$AXY$ & $A+X+Y\in [1,0,1,0,0]$\\
$AXZ$ & $A+X+Z\in [0,1,1,0,0]$\\
$AYZ$ & $A+Y+Z\in [1,1,1,0,0]$\\
$X'Y'Z'$ & $X'+Y'+Z'\in [1,1,0,0,1]$\\
$A'X'Y'$ & $A'+X'+Y'\in [1,0,1,0,1]$\\
$A'X'Z'$ & $A'+X'+Z'\in [0,1,1,0,1]$\\
$A'Y'Z'$ & $A'+Y'+Z'\in [1,1,1,0,1]$\\
$BXY$ & $B+X+Y\in [1,0,0,1,0]$\\
$BXZ$ & $B+X+Z\in [0,1,0,1,0]$\\
$BYZ$ & $B+Y+Z\in [1,1,0,1,0]$\\
$BXX'$ & $B+X+X'\in [0,0,0,1,1]$.
\end{tabular}
\end{center}

We have eliminated 21 possible cases for the parity class of $R$ and the remaining 11 cases are eliminated on the case-by-case basis.

{\bf Case \ref{lem:pr-py-py-9}.10011}: $R\in [1,0,0,1,1]$. The midpoints of segments $XY'$ and $RB$ represent the same class $\langle \frac12,0,0,0,\frac12\rangle\in\Z^5_{1/2}$. Using Lemma \ref{lem:translated_cell} for the cell $\mathcal D(x)$ and the contact face with dual cell $XYY'X'$ we get that $\mathcal D(x)$ contains a point $R'\in [1,0,0,1,0]$. This case was eliminated above using Lemma \ref{lem:3+1} for points $B$, $X$, and $Y$.

{\bf Case \ref{lem:pr-py-py-9}.01011}: $R\in [0,1,0,1,1]$. This case becomes identical to {\bf Case \ref{lem:pr-py-py-9}.10011} if we swap $Y$ to $Z$ and $Y'$ to $Z'$.

{\bf Case \ref{lem:pr-py-py-9}.11011}: $R\in [1,1,0,1,1]$. This case becomes identical to {\bf Case \ref{lem:pr-py-py-9}.10011} if we swap $X$ to $Z$ and $X'$ to $Z'$.

{\bf Case \ref{lem:pr-py-py-9}.00110}: $R\in [0,0,1,1,0]$. If $AB$ is a facet vector of $\mathcal T_P$ then we get a contradiction with Lemma \ref{lem:3+1} for the three points $A$, $B$, $X$ connected with facet vectors and $R\in A+B+X$ in $\Z^5_p$ as all four points belong to $\mathcal D(x)$. Similarly, the segment $A'B$ is not a facet vector as we get a contradiction with Lemma \ref{lem:3+1} for points $A'$, $B$, $X'$ and $R\in A'+B+X'$ otherwise.

The set of midpoints $M_{\mathcal D(yz)}$ of the dual cell $\mathcal D(yz)=CXYZX'Y'Z'$ contains 14 classes of points satisfying $x_3=x_4$ in $\Z^5_{1/2}$.  The remaining two points in this 4-dimensional space are $\langle 0,0,\frac 12,\frac 12,0\rangle$ and $\langle 0,0,\frac 12,\frac 12,\frac12\rangle$ represented by the midpoints of $AB$ and $A'B$, respectively. These two points do not correspond to facet vectors and Lemma \ref{lem:free_space} implies that $yz$ is a free direction for $P$.

{\bf Case \ref{lem:pr-py-py-9}.10110}: $R\in [1,0,1,1,0]$. This case becomes identical to {\bf Case \ref{lem:pr-py-py-9}.00110} if we swap $X$ to $Y$ and $X'$ to $Y'$.

{\bf Case \ref{lem:pr-py-py-9}.01110}: $R\in [0,1,1,1,0]$. This case becomes identical to {\bf Case \ref{lem:pr-py-py-9}.00110} if we swap $X$ to $Z$ and $X'$ to $Z'$.

{\bf Case \ref{lem:pr-py-py-9}.00111}: $R\in [0,0,1,1,1]$. The midpoints of $RB$ and $AX'$ represent the same class $\langle 0,0,\frac 12,0,\frac 12\rangle\in \Z^5_{1/2}$. Using Lemma \ref{lem:translated_cell} for the dual cell $\mathcal D(x)$ and the face with dual cell $AXX'A'$ we get that $\mathcal D(x)$ contains a point $R'\in [0,0,1,1,0]$. From {\bf Case \ref{lem:pr-py-py-9}.00110} we conclude that $yz$ is a free direction for $P$.

{\bf Case \ref{lem:pr-py-py-9}.10111}: $R\in [1,0,1,1,1]$. The midpoints of $RB$ and $AY'$ represent the same class $\langle \frac12,0,\frac 12,0,\frac 12\rangle\in \Z^5_{1/2}$. Using Lemma \ref{lem:translated_cell} for the dual cell $\mathcal D(x)$ and the face with dual cell $AYY'A'$ we get that $\mathcal D(x)$ contains a point $R'\in [1,0,1,1,0]$. From {\bf Case \ref{lem:pr-py-py-9}.10110} we conclude that $P$ has a free direction.

{\bf Case \ref{lem:pr-py-py-9}.01111}: $R\in [0,1,1,1,1]$. The midpoints of $RB$ and $AZ'$ represent the same class $\langle 0,\frac12,\frac 12,0,\frac 12\rangle\in \Z^5_{1/2}$. Using Lemma \ref{lem:translated_cell} for the dual cell $\mathcal D(x)$ and the face with dual cell $AZZ'A'$ we get that $\mathcal D(x)$ contains a point $R'\in [0,1,1,1,0]$. From {\bf Case \ref{lem:pr-py-py-9}.01110}  we conclude that $P$ has a free direction

{\bf Case \ref{lem:pr-py-py-9}.1111*}: $R\in [1,1,1,1,0]$ or $R\in [1,1,1,1,1]$. This case is similar to {\bf Case \ref{lem:pr-pr-py-10}.1111*} of Lemma \ref{lem:pr-pr-py-10} as $\mathcal D(yz)=CXYZX'Y'Z'$ in both cases.
\end{proof}

\begin{lemma}\label{lem:pr-py-py}
If a triangular face $xyz$ of $P$ with prismatic dual cell has exactly two edges $xz$ and $yz$ with dual cells equivalent to pyramids over this triangular prism, then $P$ has a free direction.
\end{lemma}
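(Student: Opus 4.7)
The plan is to assume $P$ has no free direction and derive a contradiction by mirroring the strategy of Lemma \ref{lem:pr-pr-py}. Specifically, I would show that the three-dimensional face $G$ of $\mathcal T_P$ with dual cell $XYY'X'$ is forced to be combinatorially a tetrahedron, which contradicts its central symmetry: since $\mathcal D(G) = XYY'X'$ is a parallelogram, $G$ is a contact face and thus centrally symmetric, whereas a tetrahedron is not.

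First I would use Lemma \ref{lem:pr-py-py-9} to get $\mathcal D(x) = BAXYZA'X'Y'Z'$ and then establish the symmetric statement $\mathcal D(y) = CAXYZA'X'Y'Z'$ by an entirely analogous case analysis with the roles of $x$ and $y$ interchanged (noting that in the prism-pyramid-pyramid setup both $x$ and $y$ play symmetric roles with $B$ replaced by $C$). Then, following the template of Lemma \ref{lem:pr-pr-py}, the 2-face $H_{xy}$ of $G$ adjacent to $F = xyz$ along the edge $xy$ satisfies $\mathcal D(H_{xy}) = AXYA'X'Y'$ (triangular prism), so Lemma \ref{lem:triangle} gives $H_{xy} = xyt$ for a new vertex $t$. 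The dual cell $\mathcal D(xt)$ contains $AXYA'X'Y'$ and sits inside $\mathcal D(x)$; the prism-over-tetrahedron options from Lemma \ref{lem:edge_cell} are excluded because $\{Z, Z'\}$ would force a copy centered at $Z$ to contain the whole triangle $xyt$, and $B' \notin \mathcal D(x)$ blocks $\{B, B'\}$. Hence $\mathcal D(xt) = BAXYA'X'Y'$, a pyramid with apex $B$; symmetrically, $\mathcal D(yt) = CAXYA'X'Y'$ is a pyramid with apex $C$.

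Consequently, the 2-faces $H_{xz}$ and $H_{yz}$ of $G$ have dual cells $\mathcal D(H_{xz}) = \mathcal D(xz) \cap \mathcal D(xt) = BXYY'X'$ and $\mathcal D(H_{yz}) = \mathcal D(yz) \cap \mathcal D(yt) = CXYY'X'$, both pyramid over parallelogram. The main obstacle is here: unlike in Lemma \ref{lem:pr-pr-py}, where the analogous faces have prismatic dual cells and are triangles by Lemma \ref{lem:triangle}, now the dual cells are not prisms, so the triangularity of $H_{xz}$ and $H_{yz}$ needs a separate argument. My plan is to enumerate the 4-dimensional dual cells containing $BXYY'X'$ by analyzing the facet structure of $\mathcal D(x)$, which is a 5-dimensional pyramid with apex $B$ over the 4-dimensional prism $AXYZA'X'Y'Z'$; among its facets only $\mathcal D(xz)$ and $\mathcal D(xt)$ contain $BXYY'X'$. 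Together with the observation that both $B \in \mathcal D(xz) \cap \mathcal D(xt)$ and $C \in \mathcal D(yz) \cap \mathcal D(yt)$ force $\{B, C, X, Y, X', Y'\} \subset \mathcal D(z) \cap \mathcal D(t)$, and that this set spans a 4-dimensional affine subspace, one concludes that $zt$ is an edge of $\mathcal T_P$ with $\mathcal D(zt) \supset BXYY'X'$ and $\mathcal D(zt) \supset CXYY'X'$. Therefore $H_{xz}$ has the three edges $xz, xt, zt$ and is the triangle $xzt$, and by the symmetric argument $H_{yz} = yzt$.

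Finally, $G$ then has four triangular 2-faces $F, H_{xy}, H_{xz}, H_{yz}$ pairwise sharing edges that together bound a tetrahedron $xyzt$. By convexity $G$ is forced to be this tetrahedron, contradicting the central symmetry that $G$ possesses as a contact face. This contradiction shows that the assumption of no free direction must fail, completing the proof.
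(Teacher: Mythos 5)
Your proposal is correct and follows essentially the same route as the paper's proof: Lemma \ref{lem:pr-py-py-9} applied to both $x$ and $y$, the identification of $H_{xy}=xyt$, $\mathcal D(xt)=BAXYA'X'Y'$ and $\mathcal D(yt)=CAXYA'X'Y'$, and the final contradiction between $G$ being a tetrahedron $xyzt$ and its central symmetry as a contact face. The one step you single out as ``the main obstacle'' --- triangularity of $H_{xz}$ and $H_{yz}$ --- is where you diverge slightly: the paper simply notes that the two $2$-faces $H_{xz}$ and $H_{yz}$ of the $3$-polytope $G$ share the two vertices $z$ and $t$, hence share the edge $zt$, and a polygon containing the three edges $xz$, $xt$, $zt$ is the triangle $xzt$. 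Your alternative --- that $\mathcal D(z)\cap\mathcal D(t)$ contains $\{B,C,X,Y,X',Y'\}$, whose parity classes span a $4$-dimensional affine subspace, so the minimal face containing $z$ and $t$ cannot be $2$- or $3$-dimensional (its dual cell would be $XYY'X'$ or a dual $3$-cell, which spans only three dimensions by Delone's classification) and must be the edge $zt$ --- is also valid, and does not actually require the unestablished claim that $\mathcal D(x)$ carries the facet structure of a $5$-dimensional pyramid over $AXYZA'X'Y'Z'$; that enumeration can be dropped.
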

\begin{proof}
Suppose $P$ does not have a free direction, then $\mathcal D(x)=BAXYZA'X'Y'Z'$ according to Lemma \ref{lem:pr-py-py-9}. The proof generally repeats the proof of Lemma \ref{lem:pr-pr-py} with minor changes. It might be useful to use Figure \ref{pict:pr-pr-py} to track dual cells of faces and edges we use in the arguments.

Parallelogram $XYY'X'$ is the dual cell of a face of the tiling $\mathcal T_P$. Let $G$ be the 3-dimensional face of $\mathcal T_P$ such that $\mathcal D(G)=XYY'X'$. In particular, triangle $F=xyz$ is a face of $G$. Let $H_{xy}$ be the face of $G$ adjacent to $F$ by $xy$. The dual cell of $H_{xy}$ contains the points $X$, $Y$, $Y'$ and $X'$ and is contained in $\mathcal D(xy)=AXYZA'X'Y'Z'$, hence $\mathcal D(H_{xy})=AXYA'X'Y'$ as this is the only 3-cell within $\mathcal D(xy)$ that contains $XYY'X'$ other than $\mathcal D(F)$. Since $\mathcal D(H_{xy})$ is equivalent to a triangular prism, then $H_{xy}$ is a triangle due to Lemma \ref{lem:triangle} unless $P$ has a free direction.

Let $H_{xy}=xyt$ for some point $t$. We look at the dual cell of the edge $xt$. This dual cell contains $AXYA'X'Y'$, the dual cell of $xyt$, and is contained in $BAXYZA'X'Y'Z'$, the dual cell of $x$ obtained in Lemma \ref{lem:pr-py-py-9}. The dual cell $\mathcal D(xt)$ does not contain $Z$ (or $Z'$) as in that case the copy of $P$ centered at $Z$ (or $Z'$) would contain all three vertices of $xyt$ but not the triangle $xyt$ itself. Hence, $\mathcal D(xt)$ contains $B$ and $\mathcal D(xt)=BAXYA'X'Y'$.

The dual cells $\mathcal D(xz)=BXYZX'Y'Z'$ and $D(xt)=BAXYA'X'Y'$ intersect by the dual 3-cell $BXYX'Y'$ and therefore the edges $xz$ and $xt$ belong to a two-dimensional face $H_{xz}$ of $G$ with the dual cell $\mathcal D(H_{xz})=BXYB'X'Y'$. 

We can use Lemma \ref{lem:pr-py-py-9} to find the dual cell $\mathcal D(y)$ as similarly to $x$, $y$ is a vertex of $xyz$ incident to edges having two non-equivalent dual cells. Thus, $\mathcal D(y)=CAXYZA'X'Y'Z'$. Similar arguments that we presented for finding $\mathcal D(xt)$ show that $\mathcal D(yt)=CAXYA'X'Y'$.

\begin{center}
\begin{figure}[!ht]
\includegraphics[width=0.8\textwidth]{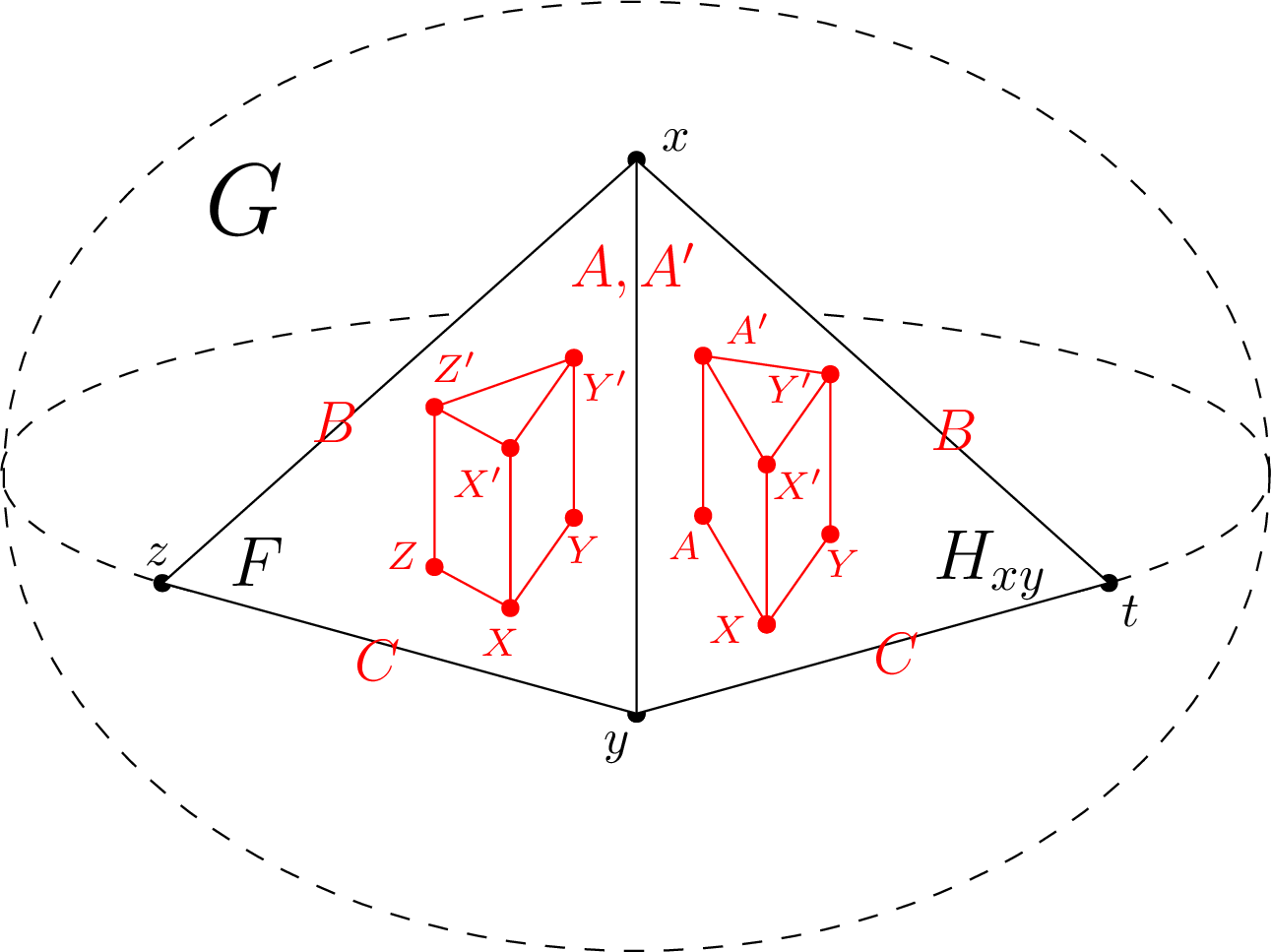}
\caption{An illustration for the proof of Lemma \ref{lem:pr-py-py}. The face $G$ with $\mathcal D(G)=XYY'X$ and its triangular faces $xyz$ and $xyt$ with prismatic dual cells. We put dual cells of two-dimensional faces inside corresponding triangles and show only additional points corresponding to edges. Dual cells are shown in red.}
\label{pict:pr-py-py}
\end{figure}
\end{center}

Similarly to the case of edges $xz$ and $xt$, the dual cells $\mathcal D(yz)=CXYZX'Y'Z'$ and $\mathcal D(yt)=CAXYA'X'Y'$ intersect by the dual 3-cell $CXYX'Y'$ and therefore $yz$ and $yt$ belong to a two-dimensional face $H_{yz}$ of $G$ with $\mathcal D(H_{yz})=CXYX'Y'$. Two 2-dimensional faces $H_{yz}$ and $H_{xz}$ have two vertices $z$ and $t$ in common, hence $zt$ is an edge of both and $H_{xz}=xzt$ and $H_{yz}=yzt$. This means that the face $G$ is the tetrahedron $xyzt$ as we identified four triangular faces $F=xyz$, $H_{xy}=xyt$, $H_{xz}=xzt$, and $H_{yz}=yzt$ of $G$.
{\sloppy

}

However the dual cell $\mathcal D(G)$ is $XYY'X'$, so $G$ is a contact face and must be centrally symmetric. Hence $G$ cannot be a tetrahedron.
\end{proof}

\section{Pyramid-Pyramid-Pyramid case}\label{sec:py-py-py}

In this section we assume that $P$ does not have a dual 3-cell equivalent to a cube. Also, if $F$ is a two-dimensional face of $P$ with the dual cell equivalent to a triangular prism, then $F$ is a triangle and dual cells of all edges of $F$ are pyramids over this prism. This is the last case with $pr(F)=0$ for all faces with prismatic dual 3-cells as in all other cases we establish that $P$ has a free direction and hence satisfies the Voronoi conjecture. 

For such $P$ we show that $P$ admits a canonical scaling or $P$ has a free direction. In both cases $P$ satisfies the Voronoi conjecture. The main tool we use to establish canonical scaling for $P$ is the gain function, see Definition \ref{def:gain}.  In this particular case we extend the notion of gain function for two facet vectors within a non-triangular dual 2-cell.

\begin{definition}\label{def:gain_new}
Let $KL$ and $LM$ be two facet vectors of a dual cell $KLMN$ equivalent to a parallelogram. If $O$ is point such that $OKLMN$ is a dual 3-cell, then we define
$$\gamma(KL,LM):=\gamma(KL,OL)\cdot \gamma(OL,LM).$$
This definition can also be extended to a sequence of facet vectors with each pair of consequent vectors within one dual 2-cell.

We note that this definition gives a way (maybe ambiguous) to define the gain function for each pair of appropriate facet vectors as every dual 2-face equivalent to a parallelogram belongs to a pyramidal dual 3-face as all parallelograms in a prismatic dual cell (dual cell of a triangle $xyz$) belong to pyramid subcells that are faces of pyramids over triangular prism (dual cells of the edges $xy$, $xz$, and $yz$).

We also note that this definition may give multiple values for the gain function $\gamma(KL,LM)$ if $KLMN$ is a subcell for two or more pyramidal dual 3-cells. We say that the parallelogram dual cell $KLMN$ is {\it coherent} dual cell if $\gamma(KL,LM)$ does not depend on the choice of $O$ for the cell $OKLMN$ equivalent to a pyramid over parallelogram.
\end{definition}

\begin{lemma}\label{lem:all_coherent}
If all dual 2-cells of $\mathcal T_P$ equivalent to parallelograms are coherent, then the Voronoi conjecture is true for $P$.
\end{lemma}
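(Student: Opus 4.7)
The plan is to deduce the Voronoi conjecture by producing a canonical scaling on $\mathcal T_P$. By the criterion of \cite{GGM}, it is sufficient to verify that the gain function $\gamma$ is well-defined on every ordered pair of facet vectors lying in a common dual $2$-cell and that the product $\gamma(f_1,f_2)\cdots\gamma(f_{k-1},f_k)$ equals $1$ along every closed cycle $f_1,f_2,\ldots,f_k=f_1$ of facet vectors. Well-definedness of $\gamma$ on triangular $2$-cells is intrinsic by Definition \ref{def:gain}; well-definedness on parallelogram $2$-cells is exactly the coherence hypothesis applied to Definition \ref{def:gain_new}.

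The cocycle condition is then established by a local check around the boundary of each dual $3$-cell, using that the complex whose $0$-, $1$-, and $2$-cells are facet vectors, dual $2$-cells, and dual $3$-cells is locally simply connected, so every closed cycle of facet vectors decomposes into elementary cycles each bounding one dual $3$-cell. Under the standing hypotheses of Section \ref{sec:py-py-py}, the admissible dual $3$-cells are tetrahedra, octahedra, pyramids over parallelograms, and triangular prisms (cubes are excluded). For the first three types, Ordine's argument from \cite{Ord} applies directly: any two facet vectors within the $3$-cell can be joined by a path of triangular dual $2$-cells (for the pyramid, this uses the apex, and is exactly the definition of gain across the base parallelogram in Definition \ref{def:gain_new}), so the boundary cycle gives $\gamma=1$.

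The heart of the proof, and the reason the coherence hypothesis has been introduced, is the triangular prism case. Let $D=KLMK'L'M'$ be such a dual $3$-cell. By Lemma \ref{lem:triangle} and the Pyramid-Pyramid-Pyramid hypothesis, $D=\mathcal D(F)$ for some triangular face $F=xyz$ of $P$, and each of the three parallelogram $2$-faces of $D$ is the base of a pyramidal dual $3$-cell corresponding to one of the edges $xy,xz,yz$ of $F$. I would exploit coherence to compute the gain across each parallelogram face of $D$ through the apex of the corresponding pyramid; the boundary cycle of $D$ then decomposes as a product of the boundary cycles of the three pyramidal $3$-cells attached along its parallelogram faces and the two triangular $2$-cells $KLM$ and $K'L'M'$. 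Each pyramid contributes $1$ by Ordine's theorem and each triangle contributes $1$ by Definition \ref{def:gain}, so $\gamma$ equals $1$ around the prism as required.

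The main obstacle will be making the final combinatorial reduction entirely rigorous: one must verify that the three pyramidal cycles and the two triangular cycles concatenate with consistent orientations so that the product is precisely the boundary cycle of $D$, with the apex contributions cancelling pairwise where pyramids overlap with the prism. This is where coherence is used crucially, since without it the ``gain through the apex'' would be multivalued and the cancellations could not be guaranteed.
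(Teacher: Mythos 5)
Your overall strategy coincides with the paper's: reduce to showing that the gain function equals $1$ on cycles inside a single dual $3$-cell, dispose of tetrahedra, octahedra and pyramids by Ordine's and the \cite{GGM} results, and treat the triangular prism by passing to the surrounding pyramidal structure, with coherence guaranteeing that the gain across a parallelogram is single-valued. However, the decisive step --- the prism case --- is exactly the part you leave as a sketch, and the sketch as written does not close. The decomposition you propose, namely ``three pyramidal $3$-cells attached along the parallelogram faces plus the two triangular $2$-cells $KLM$ and $K'L'M'$,'' is missing an essential ingredient: the two \emph{tetrahedral} dual $3$-cells. The cycles one must kill in the prism are not the boundaries of its $2$-faces (those are trivially $1$) but cycles of the form $\gamma(KK',KL,LM,KK')$, which mix a vertical edge with two edges of a base triangle. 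After inserting the apex $O$ via Definition \ref{def:gain_new} on the two parallelogram steps, one is left with the cycle $OK\to KL\to LM\to OM\to OK$ inside the tetrahedron $OKLM$. This cycle is \emph{not} a product of the boundary cycles of the four triangular faces of $OKLM$ (a short parity count on the edge-adjacency graph of the tetrahedron shows the face boundaries span only a rank-$4$ sublattice of the rank-$7$ cycle space, and this particular cycle lies outside it), so ``each triangle contributes $1$ by Definition \ref{def:gain}'' cannot replace the genuinely geometric fact that $\gamma\equiv 1$ on \emph{all} cycles of a tetrahedral dual $3$-cell.

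The fix, which is what the paper does, is to use the structural information provided by the Pyramid--Pyramid--Pyramid hypothesis more strongly: the prism $XYZX'Y'Z'$ is contained in a single dual $4$-cell $AXYZX'Y'Z'$ (a pyramid over the prism, the dual cell of an edge of $F$), whose list of $3$-subcells from Lemma \ref{lem:edge_cell} comprises the three pyramids $AXYY'X'$, $AXZZ'X'$, $AYZZ'Y'$ \emph{and} the two tetrahedra $AXYZ$, $AX'Y'Z'$, all with the common apex $A$. One then computes concretely $\gamma(XX',XY,YZ,XX')=\gamma(XX',XA,XY,YZ,XX')=\gamma(XX',XA,AZ,YZ,XX')=\gamma(XX',XA,AZ,XX')=1$, where the first equality uses the pyramid over $XYY'X'$, the second the tetrahedron $AXYZ$, the third the pyramid over $YZZ'Y'$, and the last is a cycle inside the pyramid $AXZZ'X'$. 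Your proposal also needs one further (routine but necessary) observation that the paper makes explicit: the cycle space of facet vectors inside the prism is generated by the vertex cycles $\gamma(KK',KL,LM,KK')$ together with the triangle boundaries, so checking this one cycle type (up to symmetry) suffices. With these two repairs --- adding the tetrahedra with a single common apex, and identifying the generating cycles --- your argument becomes the paper's proof.
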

\begin{proof}
First we claim that the value of gain function $\gamma$ is 1 on every cycle of facet vectors of $\mathcal T_P$. It is enough to show that for cycles within single dual 3-cell of $\mathcal T_P$. For dual 3-cells equivalent to a tetrahedron or an octahedron, we refer to \cite[Lem 3.6]{GGM} which established that the gain function on a sequence of facet vectors around any vertex of a dual 3-cell is 1 provided all dual 2-subcells incident to this vertex are triangles. The claim for tetrahedral and octahedral dual 3-cells then follows as every relevant cycle of facet vectors within the dual 3-cell can be decomposed in (a product of) cycles around vertices and cycles defined by triangular subscells, and for each such cycle the gain function is~1 by the argument above or by Definition \ref{def:gain}.

For a dual 3-cell equivalent to a pyramid, the cycle of facet vectors around its apex has gain function 1 due to \cite[Lem. 3.6]{GGM}. For cycles around base vertices of the pyramid, the gain function is~1 due to Definition \ref{def:gain_new}. Similarly to the previous case, these five cycles around the apex and the base vertices together with the four cycles of facet vectors defined by triangular subcells of the pyramid can be used to decompose any cycle of facet vectors within the pyramid and give the claim of the lemma. 

The last case is a prismatic dual cell as $\mathcal T_P$ does not have cubical dual 3-cells. We fix one prismatic dual cell $XYZX'Y'Z'$ and show that all cycles within this cell have gain function~1. All the cycles within this cell are generated by cycles around vertices of the prism and the cycles defined by two triangular subcells of the prism, so it is enough to show that $$\gamma(XX',XY,YZ,XX')=1.$$ Here we use that edges $XX'$, $YY'$, and $ZZ'$ of the prism represent equivalent facet vectors of $\mathcal T_P$ so the value above is the value of $\gamma$ on the cycle around vertex $Y$ of the prism.

If $P$ does not have a free direction, then $XYZX'Y'Z'$ belongs to a 4-cell $AXYZX'Y'Z'$ equivalent to a pyramid over triangular prism. We use pyramids and tetrahedra within this 4-cell to extract the value of $\gamma(XX',XY,YZ,XX')$. From the dual 3-cell $AXYY'X'$ we know that $\gamma(XX',XY)=\gamma(XX',XA,XY)$. From the tetrahedral dual 3-cell $AXYZ$ we know that $\gamma(XA,XY,YZ)=\gamma(XA,AZ,YZ)$. And from the pyramidal dual 3-cell $AYZZ'Y'$ we know that $\gamma(AZ,YZ,XX')=\gamma(AZ,XX')$ because $XX'$, $YY'$ and $ZZ'$ represent equivalent facet vectors. Combining these equalities together we get
\begin{multline*}
\gamma(XX',XY,YZ,XX')=\gamma(XX',XA,XY,YZ,XX')=\\=\gamma(XX',XA,AZ,YZ,XX')=\gamma(XX',XA,AZ,XX').
\end{multline*}
The last quantity is 1 because $XX'-XA-AZ-XX'$ is a cycle within the pyramidal dual 3-cell $AXZZ'X'$

Once the gain function $\gamma$ has value 1 on every cycle of facet vectors of $\mathcal T_P$, then $\mathcal T_P$ admits a canonical scaling. In this case we fix a facet $F\in \mathcal T_P^4$ (the set of all facets of $\mathcal T_P$)  and set $s(F):=1$ where $s:\mathcal T_P^4\longrightarrow \R_+$ is the canonical scaling we construct. For a facet $G\in \mathcal T_P^4$ we choose any path $F=F_0,F_1,\ldots,F_m=G$ such that $F_i$ and $F_j$ share a face of codimension 2 and define 
$$s(G)=\gamma (F_0,F_1,\ldots,F_m).$$
It is easy to see that if all parallellograms are coherent then $s$ is indeed a canonical scaling for $\mathcal T_P$ and hence $P$ satisfies the Voronoi conjecture. 

We also refer to \cite{GGM} for more details on the connection between gain function and canonical scaling.
\end{proof}

It remains to prove that all dual 2-cells equivalent to parallelograms are coherent.

\begin{lemma}\label{lem:coherent}
If $G$ is a contact 3-dimensional face of $\mathcal T_P$, then $\mathcal D(G)$ is coherent or $P$ has a free direction.
\end{lemma}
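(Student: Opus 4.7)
Assume $P$ has no free direction and let $\mathcal D(G) = KLMN$ be the parallelogram dual cell. The plan is to reinterpret coherence of $KLMN$ as a cycle condition around the lattice point $L$, and then verify this condition by walking around the centrally symmetric $3$-polytope $G$ and using local cycle closure inside dual $4$-cells.

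First, I would observe that coherence for $\mathcal D(G) = KLMN$ is equivalent to the statement that, for any two apices $O_1, O_2$ of pyramidal dual $3$-cells $O_i KLMN$, the cycle of facet vectors
\[
KL,\ O_1 L,\ LM,\ O_2 L,\ KL
\]
at $L$ has gain equal to $1$. The $2$-faces of $G$ are in bijection with the dual $3$-cells containing $KLMN$ as a $2$-face; under the standing assumptions of Section~\ref{sec:py-py-py} (no cubical dual $3$-cells and every prismatic dual $3$-cell coming from a triangular $2$-face with pyramidal dual $4$-cells on its edges), the only possibilities for such a $3$-cell are pyramids over the parallelogram $KLMN$ and triangular prisms having $KLMN$ as a rectangular side.

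Next, I would treat the local step: if two pyramids $O_1 KLMN$ and $O_2 KLMN$ lie in a common dual $4$-cell $\mathcal D(e)$ for an edge $e$ of $G$, then the four-term cycle at $L$ contracts inside $\mathcal D(e)$ into a product of triangle cycles and cycles inside tetrahedral or pyramidal sub-$3$-cells, on each of which the gain is identically $1$ by~\cite{GGM}. For a triangular prism $2$-face $F'$ of $G$ with $\mathcal D(F') = XYZX'Y'Z'$ having $KLMN$ as a rectangular side, I would use the fact that each edge of $F'$ has a pyramidal dual $4$-cell of the form $QXYZX'Y'Z'$, which contains the pyramid $QKLMN$. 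Agreement of the three apices obtained from the three edges of $F'$ then follows from the same local cycle closure, reducing the prism case to the pyramid case.

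Finally, to globalize, I would build the graph whose nodes are pyramidal dual $3$-cells with base $KLMN$ (equivalently, the corresponding $2$-faces of $G$) and whose edges record pairs sharing a $1$-face of $G$. Since $G$ is a $3$-polytope its boundary complex is connected, and the previous local step shows that the four-term cycle at $L$ closes across each graph edge, hence around every cycle. The main obstacle is carrying through the local step for all possible dual $4$-cell types that may arise around $KLMN$ under our reduced assumptions; in the event that some dual $4$-cell fails to support the required cycle closure, I would extract a missing midpoint class in $M_{\mathcal D(e)}$ and invoke Lemma~\ref{lem:free_space} to exhibit a free direction for $P$, thereby establishing the dichotomy in the statement of the lemma.
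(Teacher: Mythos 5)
Your overall architecture --- reduce coherence to a cycle-closure condition, propagate it along the boundary complex of the $3$-polytope $G$, and fall back on Lemma~\ref{lem:free_space} when propagation fails --- matches the paper's strategy in outline. But there is a genuine gap at exactly the point where the lemma's content lives. Your ``local step'' across an edge of $G$ shared by two pyramidal $2$-faces asserts that the four-term cycle at $L$ ``contracts inside $\mathcal D(e)$ into a product of triangle cycles and cycles inside tetrahedral or pyramidal sub-$3$-cells.'' This is not automatic: it is precisely the content of Ordine's analysis of locally ``Ordine'' edges (\cite[Sec.~7]{Ord}), whose hardest case in general requires a nontrivial enumeration of dual $4$-cells (the paper explains after Corollary~\ref{cor:py-py-py} how that enumeration is replaced in dimension $5$ by a separate free-direction argument). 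You cannot simply assert the contraction exists; the combinatorics of dual $4$-cells is not classified, and the cycle space of the $2$-skeleton of $\mathcal D(e)$ need not be generated by triangles and pyramid/tetrahedron cycles without an argument.

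More seriously, the decisive obstruction is never identified. After reducing to $G_1$ and $G_2$ sharing a vertex $V$ of $G$, the only configuration in which coherence cannot be propagated around $V$ through pyramidal faces is when \emph{both} arcs of $2$-faces around $V$ are blocked by triangular faces $H_1$, $H_2$ with prismatic dual cells $XYZX'Y'Z'$ and $XYTX'Y'U$. The paper's proof then does real work: a parity-class computation (via Lemmas~\ref{lem:parity} and~\ref{lem:3+1}) locates an edge $e_A$ of $H_1$ with dual cell $AXYZX'Y'Z'$ such that $A$ and $T$ lie in the same affine translate of the span of $XYZX'Y'Z'$ in $\Z^5_p$, and then an explicit count shows that of the $16$ classes in the $4$-dimensional subspace $\pi_A$ of $\Z^5_{1/2}$, fourteen are midpoints of $\mathcal D(e_A)$ and the remaining two are non-facet contact classes coming from the parallelogram subcells of $XYTX'Y'U$ containing $T$; Lemma~\ref{lem:free_space} then yields the free direction. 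Your proposal compresses all of this into ``I would extract a missing midpoint class and invoke Lemma~\ref{lem:free_space},'' which is a placeholder for the argument rather than the argument itself: it does not say which edge is free, why the relevant $4$-dimensional subspace is covered, or why the two non-midpoint classes are non-facet contact classes. Until that configuration is analyzed, the dichotomy in the statement is not established.
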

\begin{proof}
We consider only the case when $P$ does not have a free direction.

Suppose that $G_1$ and $G_2$ are two two-dimensional faces of $\mathcal T_P$ incident to $G$ such that $\mathcal D(G_1)$ and $\mathcal D(G_2)$ are equivalent to pyramids over parallelograms. We need to show that $G_1$ and $G_2$ give rise to the same value of gain function between two facet vectors of $\mathcal D(G)$ using Definition \ref{def:gain_new}.

We note that no two two-dimensional faces of $G$ adjacent by an edge can both have dual cells equivalent to triangular prisms because the common edge of these two faces has dual cell equivalent to a pyramid over triangular prism (this is the case in this Section). However a pyramid over triangular prism has only one face equivalent to triangular prism.

$G$ is a 3-dimensional polytope and $G_1$ and $G_2$ are two-dimensional faces of $G$. We connect a vertex of $G_1$ with a vertex of $G_2$ by a path of edges of $G$. For every edge of this path, at least one of two incident two-dimensional faces of $G$ has dual cell equivalent to a pyramid, so it is enough to show that if $G_1$ and $G_2$ share a vertex, then they give rise to the same value of gain function between two facet vectors of $\mathcal D(G)$.

If $G_1$ and $G_2$ share an edge $e$, then $e$ does not belong to a two-dimensional face of $\mathcal T_P$ with dual 3-cell equivalent to a triangular prism. Indeed, in that case the dual cell $\mathcal D(e)$ would be a pyramid over triangular prism, but a pyramid over triangular prism does not have two pyramidal faces with a common base, and such two faces must be dual cells of $G_1$ and $G_2$. Thus, the two-dimensional faces that contain $e$ have tetrahedral, octahedral, or pyramidal dual 3-cells. Then $e$ is a locally ``Ordine'' edge meaning that the dual cell $\mathcal D(e)$ does not contain cubical or prismatic dual 3-cells as subcells. In that case the parallelogram dual cell of $G$ has the same gain function within cells corresponding to faces incident to $e$, see \cite[Sec. 7]{Ord}, in particular for the faces $G_1$ and $G_2$.

If $G_1\cap G_2$ is a vertex of $G$, then there is a cycle of two-dimensional faces of $G$ around this vertex, so there are two non-intersecting paths of two-dimensional faces of $G$ from $G_1$ to $G_2$, both with a common vertex $G_1\cap G_2$. If one of these paths does not contain two-dimensional faces with prismatic dual cells, then the faces $G_1$ and $G_2$ give rise to the same value of gain function between two facet vectors of $\mathcal D(G)$ as this value does not change if we travel along the path around $G_1\cap G_2$ using only two-dimensional faces with pyramidal dual 3-cells. Suppose that there is a face with prismatic dual cell on each path. It means that there are two triangular faces $H_1$ and $H_2$ of $G$ that share the vertex $V=G_1\cap G_2$ such that both $\mathcal D(H_1)$ and $\mathcal D(H_2)$ are equivalent to triangular prisms, see Figure \ref{pict:py-py-py}. 

\begin{center}
\begin{figure}[!ht]
\includegraphics[width=0.8\textwidth]{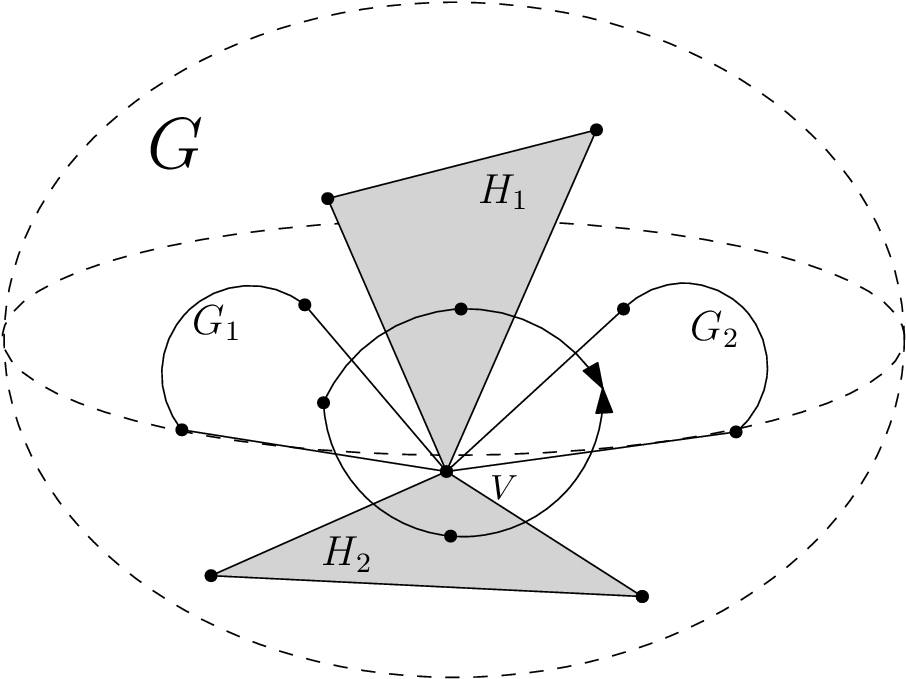}
\caption{An illustration for the proof of Lemma \ref{lem:coherent}. The face $G$ with $\mathcal D(G)=XYY'X$ and its faces $G_1$ and $G_2$ with a common vertex $V$. Two paths from $G_1$ to $G_2$ around $V$ pass through triangular faces $H_1$ and $H_2$ with prismatic dual cells.}
\label{pict:py-py-py}
\end{figure}
\end{center}

Let $XYY'X'$ be the dual cell of $G$ and let $XYZX'Y'Z'$ and $XYTX'Y'U$ be dual cells of $H_1$ and $H_2$. We can assume that $\overrightarrow{XX'}=\overrightarrow{YY'}=\overrightarrow{ZZ'}$ but $\overrightarrow{TU}$ may be equal to either $\overrightarrow{XX'}$ or $\overrightarrow{XY}$. 

We look at the parity class of the point $T$. The three-dimensional affine subspace of $\Z^5_p$ spanned by $XYZX'Y'Z'$ contains parity classes of $X$, $Y$, $Z$, $X+Y+Z$, $X'$, $Y'$, $Z'$, and $X'+Y'+Z'$. None of these classes can be the class of $T$ due to Lemmas \ref{lem:parity} and \ref{lem:3+1} as vertices of $XYZX'Y'Z'$ and $T$ are in the dual cell of $V$. Thus, there is an edge $e_A$ of $H_1$ with dual cell $AXYZX'Y'Z'$ such that $A$ and $T$ are in one affine subspace of $\Z^5_p$ parallel to the plane spanned by $XYZX'Y'Z'$. 

Let $\pi_A$ be the 4-dimensional linear subspace of $\Z^5_{1/2}$ spanned by the set of midpoints of $\mathcal D(e_A)=AXYZX'Y'Z'$. The set of midpoints of the pyramid $AXYZX'Y'Z'$ contains 14 classes in $\pi_A$. 8 of these classes are in the set of midpoints of $XYZX'Y'Z'$ and the remaining 6 classes correspond to the midpoints of facet vectors $AX$, $AY$, $AZ$, $AX'$, $AY'$, and $AZ'$. Among midpoints of the dual cell $XYTX'Y'U$, the midpoints of $TX$, $TY$, $TX'$, and $TY'$ contain two classes that correspond to contact faces of codimension $2$ (parallellogram subcells of $XYTX'Y'U$ that contain $T$). 4 midpoints of $TX$, $TY$, $TX'$, and $TY'$ together with 6 midpoints of $AX$, $AY$, $AZ$, $AX'$, $AY'$, and $AZ'$ give 8 classes (two classes are repeated twice) that form an affine subspace of $\pi_A$ parallel to the linear subspace spanned by 8 midpoints of $XYZX'Y'Z'$.

Now we can use Lemma \ref{lem:free_space} for the edge $e_A$ and the four-dimensional linear subspace $\pi_A$. Among 16 classes in $\pi_A$, 14 are the classes of midpoints of $\mathcal D(e_A)$ and two correspond to contact faces of codimension 2 of $XYTX'Y'U$. Thus, $e_A$ is a free direction for $P$ in that case which gives a contradiction.
\end{proof}

\begin{corollary}\label{cor:py-py-py}
If for every two-dimensional face $F$ of $P$ with prismatic dual 3-cell, dual cells of all edges of $F$ are equivalent to pyramids over triangular prisms, then $P$ satisfies the Voronoi conjecture.
\end{corollary}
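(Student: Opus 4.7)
My plan is to deduce the corollary as an essentially immediate consequence of Lemmas \ref{lem:all_coherent} and \ref{lem:coherent}, once we observe that the case analysis of Theorem \ref{thm:main} has placed us in the Pyramid-Pyramid-Pyramid situation. The strategy splits into two cases: either $P$ admits a free direction, in which case Lemma \ref{lem:5-free} finishes the argument, or $P$ has no free direction, in which case I would build a canonical scaling on $\mathcal T_P$. By Lemma \ref{lem:all_coherent} it is enough to verify that every dual 2-cell of $\mathcal T_P$ equivalent to a parallelogram is coherent in the sense of Definition \ref{def:gain_new}.

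The key observation that lets Lemma \ref{lem:coherent} cover all parallelogram dual 2-cells at once is the following. A parallelogram is centrally symmetric, so if the dual cell $\mathcal D(G)$ of a codimension-$2$ face $G$ is equivalent to a parallelogram then $\mathcal D(G)$ is centrally symmetric; by the characterization of contact faces recalled in Section \ref{sec:def} (a face of $\mathcal T_P$ is a contact face exactly when its dual cell is centrally symmetric) this forces $G$ itself to be a contact 3-dimensional face of $\mathcal T_P$. Hence every parallelogram dual 2-cell is the dual of a contact 3-face, and Lemma \ref{lem:coherent} applies directly to each of them.

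Assembling the pieces, if $P$ has no free direction then Lemma \ref{lem:coherent} guarantees coherence of every parallelogram dual 2-cell; Lemma \ref{lem:all_coherent} then produces a canonical scaling on $\mathcal T_P$ (using the gain-function arguments from Section \ref{sec:py-py-py}, which critically rely on the hypothesis that no dual 3-cell is a cube and that every prismatic dual 3-cell sits inside pyramids over triangular prisms). By Voronoi's criterion this gives the Voronoi conjecture for $P$. If instead $P$ does have a free direction, Lemma \ref{lem:5-free} supplies the same conclusion. There is no genuine new obstacle at this stage: the two substantive steps have already been carried out, in Lemma \ref{lem:coherent} (ruling out incoherence of a parallelogram dual cell under the no-free-direction assumption) and in Lemma \ref{lem:all_coherent} (passing from coherence to canonical scaling). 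The only point I would take care to spell out in the final write-up is the brief identification of parallelogram dual 2-cells with contact 3-faces, which is what makes Lemma \ref{lem:coherent} applicable in the form stated.
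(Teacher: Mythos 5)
Your proposal is correct and follows essentially the same route as the paper: combine Lemma \ref{lem:coherent} (every parallelogram dual 2-cell, being centrally symmetric and hence the dual cell of a contact 3-face, is coherent unless $P$ has a free direction) with Lemma \ref{lem:all_coherent} in one case and Lemma \ref{lem:5-free} in the other. The identification of parallelogram dual 2-cells with contact 3-faces that you single out is exactly the point the paper relies on implicitly, via the characterization of contact faces by centrally symmetric dual cells from Section \ref{sec:def}.
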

\begin{proof}
If all dual 2-cells of $\mathcal T_P$ equivalent to parallelograms are coherent, then $P$ satisfies the Voronoi conjecture due to Lemma \ref{lem:all_coherent}. Otherwise, there is a contact 3-dimensional face of $P$ with incoherent dual 2-cell and $P$ has a free direction due to Lemma \ref{lem:coherent}. In this case $P$ satisfies the Voronoi conjecture as well.
\end{proof}

The proof of Lemma \ref{lem:coherent} above and the general approach for parallelohedra without dual 3-cells equivalent to prisms or cubes in Theorem \ref{thm:main} rely on the proof of Ordine \cite[Sec. 7]{Ord}. The most complicated part of the proof of Ordine and the only part that involves computer computations using {\tt PORTA} software is Case 4 in \cite[Section 7.6]{Ord}. In this particular case Ordine shows that there is no dual 4-cell (with all dual 3-cells equivalent to tetrahedra, octahedra, or pyramids) with incoherent parallelograms forming a family $\mathcal R$ such that
\begin{itemize}
\item each two parallelograms in $\mathcal R$ intersect over a vertex;
\item each vertex of a parallelogram in $\mathcal R$ belongs to at least one other parallelogram in $\mathcal R$.
\end{itemize}
In the five-dimensional case these computations can be avoided.

Particularly, if $e$ is an edge of a five-dimensional parallelohedron $P$ with dual 4-cell that contains a family of incoherent parallelograms satisfying the conditions above, then the first condition implies that $\mathcal D(e)$ contains two parallelograms $ABCD$ and $AXYZ$. For a certain choice of coordinate system in $\mathcal \Z^5_p$, the points $A, B, C, D, X,Y$, and $Z$ belong to the following parity classes
\begin{center}
\begin{tabular}{ll}
$A\in[0,0,0,0,0]$, & \\
$B\in[1,0,0,0,0]$,\qquad \qquad & $X\in[0,0,1,0,0]$,\\
$C\in[0,1,0,0,0]$,\qquad \qquad & $Y\in[0,0,0,1,0]$,\\
$D\in[1,1,0,0,0]$,\qquad \qquad & $Z\in[0,0,1,1,0]$.\\
\end{tabular}
\end{center}
In that case, the set of midpoints $M_{\mathcal D(e)}$ contains all points from the 4-dimensional space $x_5=0$ of $\Z^5_{1/2}$ and $e$ is a free direction of $P$ according to Lemma \ref{lem:free_space}

\section{Concluding remarks}\label{sec:final}

In this section we explain why our approach cannot be carried out in higher dimensions without significant improvement. Our approach relies on two results that seem to require additional elaboration in order to be used in dimensions 6 and beyond.

The first result is the classification of five-dimensional Dirichlet-Voronoi parallelohedra from \cite{5dim} and verification of the combinatorial condition from \cite{GGM} done in \cite{DGM-5dim} for every five-dimensional Dirichlet-Voronoi parallelohedra; we use that verification in the proof of Lemma \ref{lem:5-free} when referring to \cite[Thm. 1.3]{DGM-5dim}. More precisely, paper \cite{5dim} gives a list of 110~244 Dirichlet-Voronoi parallelohedra in $\R^5$ and verifies that this list results in a complete list of equivalence classes of five-dimensional Dirichlet-Voronoi parallelohedra (the classes are called {\it L-types} in \cite{5dim} and correspond to affinely different Delone subdivisions for lattices). Paper \cite{DGM-5dim} verifies that every five-dimensional Dirichlet-Voronoi parallelohedron from the list of 110~244 satisfies a combinatorial condition from \cite{GGM}; this condition is sufficient to claim the Voronoi conjecture for every parallelohedron equivalent to one from the list without assuming any geometric properties in addition to equivalence in the sense of Definition \ref{def:equiv}. While the verification of the condition from \cite{GGM} is computationally simple for a given parallelohedron, the full classification in $\R^6$ and beyond looks unreachable at this moment. Particularly, the paper \cite{SV} reports about more than 250~000 types of Delone triangulations (and consequently, primitive parallelohedra) in $\R^6$; a more recent paper \cite{BE} reports about more than 500~000~000 types of primitive parallelohedra in $\R^6$. Both computations were terminated before finding all triangulations/parallelohedra  and both suggest that the total number of parallelohedra in $\R^6$, both primitive and not, is too large for computational study without additional insight.

Moreover, even if such classification of six-dimensional parallelohedra will be obtained, the approach we used requires verification that every polytope in the anticipated list possesses combinatorics that enforces Voronoi conjecture. For the five-dimensional case this was done in \cite{DGM-5dim}. While for every separate parallelohedron this verification is simple, the enormous number of polytopes may make it computationally infeasible and require a more general combinatorial approach similar to the one developed in \cite{G-root}. 

We also mention a recent announcement of a complete classification of {\it $C$-types} of six-dimensional lattices by Dutour Sikiri\'c, Magazinov, and van Woerden in \cite{c-types}. The $C$-types correspond to affinely different one-dimensional skeletons of Delone tilings for lattices. This is a coarser classification compared to the classification of primitive parallelohedra but can be considered as an important first step towards it.

The second result is the classification of dual 3-cells by Delone \cite{Del}. In the five-dimensional case, dual 3-cells originate from two-dimensional faces that have a fairly simple structure that allowed us to prove many properties in Sections \ref{sec:prism} through \ref{sec:py-py-py}. In higher dimension, we would need to deal either with three-dimensional faces of parallelohedra with additional co-dimension in the spaces $\Lambda_p$ and $\Lambda_{1/2}$, or with dual 4-cells. However at this point there is no complete classification of dual 4-cells and, in particular, the question on dimension of affine space spanned by vertices of a dual 4-cell is still open.

For the conclusion we would like to formulate a few open questions and conjectures. The first one stems from Definition \ref{def:equiv}.

\begin{question}
Is it true that every two combinatorially equivalent parallelohedra are also equivalent in the sense of Definition \ref{def:equiv}?
\end{question}

Neither positive nor negative answer to this question affects the status of the Voronoi conjecture as there, could be two combinatorially equivalent Dirichlet-Voronoi polytopes with combinatorially distinct corresponding Delone tilings or there could be a combinatorially ``unique'' parallelohedron that does not satisfy the Voronoi conjecture. Nevertheless, we think that answering that question may give an additional insight on combinatorial restrictions imposed on parallelohedra.

As a final remark, we would like to formulate two well-known conjectures on dual cells. These conjectures are still open and having a counterexample for each of them will immediately give a counterexample to the Voronoi conjecture.

\begin{conjecture}[Dimension conjecture]
For every dual $k$-cell, the dimension of its affine span is equal to $k$.
\end{conjecture}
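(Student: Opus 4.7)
The plan is to proceed by induction on $k$, using the parity-class machinery developed in Section \ref{sec:lemmas}. The base cases $k \leq 3$ are immediate from the classification results cited in Section \ref{sec:def}: dual $0$-cells are singletons, dual $1$-cells are facet vectors (segments), dual $2$-cells are triangles or parallelograms (Minkowski-Venkov), and dual $3$-cells fall into the five classes of Delone; in each case the affine dimension matches the combinatorial dimension.

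For the inductive step, fix $k \geq 4$ and a face $F$ of $\mathcal{T}_P$ of codimension $k$, and suppose toward contradiction that the affine span of $\mathcal{D}(F)$ in $\mathbb{R}^d$ is contained in some hyperplane $H$ of dimension $\leq k-1$. For every face $F' \supsetneq F$ with $\operatorname{codim} F' = k-1$, Lemma \ref{lem:intersection} and the inductive hypothesis give that $\mathcal{D}(F')$ is a subcell of $\mathcal{D}(F)$ with affine span of dimension exactly $k-1$; hence every such $\mathcal{D}(F')$ must fill $H$. In particular, fixing a vertex $X \in \mathcal{D}(F)$, all facet vectors out of $X$ lying in $\mathcal{D}(F)$ would be contained in a common $(k-1)$-dimensional linear subspace $H_0$ parallel to $H$.

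The next step is to translate this collapse into the parity picture. By Lemma \ref{lem:parity}, the map $\mathcal{D}(F) \to \Lambda_p$ is injective, and by Lemma \ref{lem:3+1} the parity classes satisfy nontrivial obstructions whenever three vertices are pairwise joined by facet vectors. I would try to count: each codimension-$(k-1)$ subcell $\mathcal{D}(F')$ contributes at least $k$ distinct parities by induction and Lemma \ref{lem:parity}, and different subcells meet along lower-dimensional dual cells by Lemma \ref{lem:intersection}. Squeezing sufficiently many $(k-1)$-subcells into a single flat of dimension $k-1$ should then produce parity coincidences or facet-vector triangles of the type forbidden by Lemmas \ref{lem:midpoints} and \ref{lem:3+1}. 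For small inductive steps, one may additionally exploit Lemma \ref{lem:translated_cell} to transport contact-face subcells inside $\mathcal{D}(F)$, each such translate occupying a full $(k-1)$-dimensional subcell and thereby forcing the parities of $\mathcal{D}(F)$ to fill an $\mathbb{F}_2$-affine flag of dimension $\geq k$.

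The main obstacle, and the reason the conjecture remains open, is that the $\mathbb{R}^d$-affine span and the $\mathbb{F}_2$-affine span in $\Lambda_p$ need not agree: a set whose parities span a $k$-dimensional $\mathbb{F}_2$-flat can nevertheless sit on a hyperplane of $\mathbb{R}^d$ if the integer coefficients conspire. Ruling out such a metric collapse appears to require a rigidity input beyond parity arithmetic - essentially a local classification of dual $4$-cells analogous to Delone's list for $3$-cells, which is exactly what is missing, as the authors remark in Section \ref{sec:final}. Without such a classification, the inductive leverage from $k-1$ to $k$ breaks down, and a genuinely new idea - possibly importing information from the canonical-scaling side of the tiling, or a volume/zonotope-type argument in the spirit of Horv\'ath or Grishukhin - seems necessary to close the gap.
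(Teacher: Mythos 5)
The statement you were asked about is not a theorem of the paper: it is the \emph{Dimension conjecture}, which the authors state in Section~\ref{sec:final} precisely as an open problem, remarking only that it is known for $k\leq 3$ because the complete lists of dual $2$-cells and dual $3$-cells are known. There is therefore no proof in the paper to compare against, and your writeup --- quite correctly --- does not actually constitute a proof either. Your base cases $k\leq 3$ match what the paper asserts, and your reduction of the inductive step (if $\mathcal D(F)$ lay in a flat $H$ of dimension $\leq k-1$, then by Lemma~\ref{lem:intersection} and induction every dual $(k-1)$-subcell would have to span exactly $H$) is sound as far as it goes. But the contradiction you hope to extract from Lemmas~\ref{lem:parity}, \ref{lem:3+1}, \ref{lem:midpoints} and \ref{lem:translated_cell} never materializes, and you yourself name the reason: those lemmas constrain only the $\mathbb F_2$-combinatorics of parity classes and midpoint classes, which is blind to real-linear degeneracies. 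A set of lattice points can realize $2^k$ distinct parity classes spanning a $k$-dimensional $\mathbb F_2$-flat while sitting inside a $(k-1)$-dimensional real affine subspace, so ``filling an $\mathbb F_2$-affine flag of dimension $\geq k$'' does not force the real affine span to have dimension $k$. This is exactly the missing rigidity input the authors point to (the absence of a classification of dual $4$-cells), and it is why the conjecture remains open.

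To be explicit about the verdict: the proposal is an honest sketch of a strategy together with a correct diagnosis of why that strategy fails, so it should not be read as a proof of the statement. The appropriate conclusion --- which both you and the paper reach --- is that the statement is established only for $k\leq 3$ and is conjectural for $k\geq 4$. One small additional caution if you ever try to push the induction further: even granting that all dual $(k-1)$-subcells through a fixed vertex $X$ span the same hyperplane $H_0$, you would still need to produce some point of $\mathcal D(F)$ provably outside $X+H_0$, and nothing in the dual-cell formalism of Section~\ref{sec:lemmas} supplies such a point; the lemmas there only ever \emph{add} points to dual cells (via translations of contact subcells) or \emph{forbid} parity classes, neither of which controls where the added points sit relative to a real hyperplane.
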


This conjecture is proved for $k\leq 3$ as all dual 3-cells are known. However, it is still open for $k\geq 4$. A stronger version of this conjecture imposes additional structure coming from Delone tilings.

\begin{conjecture}
For every dual $k$-cell $D$ there is a $k$-dimensional lattice $\Lambda$ such that there is a $k$-dimensional cell in the Delone tessellation of $\Lambda$ equivalent to $D$.
\end{conjecture}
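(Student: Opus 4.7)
The plan is to proceed by induction on $k$. The base cases $k\leq 3$ follow from Delone's classification of dual $3$-cells cited in Section \ref{sec:def} together with a direct check that each of the five listed combinatorial types is in fact a Delone polytope of some lattice. The inductive step splits into two sub-tasks: first, establishing the weaker dimension conjecture for a dual $k$-cell $D$, and second, constructing a compatible $k$-dimensional lattice $\Lambda$ whose Delone tessellation contains a cell combinatorially equivalent to $D$.

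For the dimension bound I would fix a face $F'$ of $\mathcal T_P$ of codimension $k-1$ that contains the codimension-$k$ face $F$ with $\mathcal D(F)=D$. The subcell $\mathcal D(F')\subsetneq D$ is then a dual $(k-1)$-cell, and by the induction hypothesis its affine hull has dimension $k-1$. It therefore suffices to produce a single point of $D\setminus \mathcal D(F')$ lying outside this hull. Lemma \ref{lem:parity} guarantees that all points of $D$ represent distinct parity classes; combining this with Lemmas \ref{lem:translated_cell} and \ref{lem:translated_pair}, the translation structure inside $D$ should propagate the transversality of a single extra vertex to the entire set $D\setminus \mathcal D(F')$, ruling out the possibility that $D$ collapses into a single $(k-1)$-flat.

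For the realization step, assume the dimension conjecture has been secured. Each maximal proper subcell $\mathcal D(F')$ carries, by induction, a $(k-1)$-dimensional lattice $\Lambda_{F'}$ in whose Delone tessellation $\mathcal D(F')$ appears as a Delone polytope. The candidate lattice $\Lambda$ would be constructed inside $\mathrm{aff}\,D$ by gluing the $\Lambda_{F'}$'s along their shared $(k-2)$-subcells, and one would then verify that the Delone polytope of $\Lambda$ at a distinguished vertex of $D$ has the same face lattice as $D$. Compatibility along each shared $(k-2)$-subcell is controlled by a gain-function identity in the style of Definition \ref{def:gain}, which must hold on every cycle of adjacent $(k-1)$-subcells around a fixed $(k-2)$-subcell.

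The main obstacle is precisely this coherence of the gluing. Each $\Lambda_{F'}$ is determined by the induction only up to affine equivalence of $\R^{k-1}$, and matching the induced metric on a common $(k-2)$-subcell from two sides demands genuinely global information that the combinatorial data of $D$ does not obviously encode. This is a local instance of the canonical-scaling problem, solved for restricted configurations throughout Sections \ref{sec:free}--\ref{sec:py-py-py}, and a failure of coherence would, via the remark closing Section \ref{sec:final}, give a counterexample to the Voronoi conjecture. A realistic intermediate target is therefore to settle the weaker dimension conjecture at $k=4$, which itself appears to require a classification of dual $4$-cells that is not presently available.
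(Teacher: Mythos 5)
This statement is not a theorem of the paper: it is stated in the concluding Section \ref{sec:final} as an \emph{open conjecture}, and the authors explicitly record that even the weaker dimension conjecture is unresolved for $k=4$ because no classification of dual $4$-cells exists. So there is no ``paper's proof'' to match, and your proposal, as you yourself acknowledge in its final paragraph, is a programme rather than a proof. The gaps you flag are genuine and are exactly the ones the paper identifies as blocking progress beyond dimension five.

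Two specific points where the argument would fail as written. First, in the dimension step, exhibiting one point of $D\setminus\mathcal D(F')$ outside $\mathrm{aff}\,\mathcal D(F')$ only gives $\dim\mathrm{aff}\,D\geq k$; the conjecture also requires the upper bound $\dim\mathrm{aff}\,D\leq k$, which your sketch does not address at all. Moreover, the claim that Lemmas \ref{lem:parity}, \ref{lem:translated_cell} and \ref{lem:translated_pair} let the ``translation structure propagate transversality'' is not an argument: those lemmas constrain which parity classes and which translated segments can appear inside $D$, but turning them into dimension statements is precisely what Sections \ref{sec:prism} through \ref{sec:py-py-py} do for $d=5$, and only by exhaustive case analysis over the finitely many configurations in $\Lambda_p$ and $\Lambda_{1/2}$ --- there is no known general mechanism. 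Second, the realization step is circular in spirit: the coherence of the gluing of the lattices $\Lambda_{F'}$ along shared $(k-2)$-subcells is a local instance of the canonical scaling problem, and the paper's point in Section \ref{sec:final} is that a failure of such coherence would refute the Voronoi conjecture itself. You cannot assume it. What you have written is a reasonable map of the difficulties, but no step of the induction beyond the known base cases $k\leq 3$ is actually carried out.
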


\section*{Acknowledgments}

Initial versions of this work were prepared in collaboration with Alexander Magazinov. Unfortunately, he decided to step down as an author of the paper due to personal circumstances. The author is extremely grateful to Alexander for many fruitful discussions and shared ideas that resulted in this work.

Author's research is partially supported by the Alexander von Humboldt Foundation and by the Simons Foundation.

\appendix

\section{Engel's approach to the Voronoi conjecture}\label{sec:appendix}

In this section we briefly describe the approach of Peter Engel to the Voronoi conjecture in dimension 5 and explain why the results from \cite{Eng} and \cite{Eng00} do not give a complete proof of the five-dimensional case of the Voronoi conjecture. In some cases, the papers of Engel do not formulate precise definitions and do not provide rigorous proof for claimed properties. We tried our best to give these results interpretations or justifications in the language of this paper or other relevant literature.

We contacted Dr. Engel in May-June 2019, before the first version of this manuscript was posted, to clarify the issues that we identified, but unfortunately we have not received a reply that was able to resolve them. We value Dr. Engel's expertise in chemistry and crystallography, but we believe that mathematical contributions in \cite{Eng} and \cite{Eng00} do not result in a rigorous proof of the Voronoi conjecture in $\R^5$ or in sketch of such a proof.

The final judgement on the merits of the results of \cite{Eng} and \cite{Eng00} not related to the Voronoi conjecture is beyond the scope of this appendix.

\addtocontents{toc}{\SkipTocEntry}
\subsection*{Additional definitions}

We start by formulating a few more definitions that are crucial for understanding Engel's results. While some of these definitions can be re-worded using the language we use in the paper or are frequently used in the literature, we prefer to use the terms from \cite{Eng,Eng00} in this appendix.

\begin{definition}
Given a parallelohedron $P$, we define a {\it zone} $Z$ of $P$ is a class of parallel edges of $P$. The zone $Z$ is called {\it closed} if every two-dimensional face of $P$ has either $0$ or $2$ edges from $Z$. Otherwise, $Z$ is called an {\it open} zone. 
\end{definition}

Closed zone of parallelohedra can be used to identify some of free directions of $P$. If $e$ is a shortest edge of a closed zone $P$, then $e$ gives a free direction of $P$ and there exists a parallelohedron $Q$ such that $P=Q+I$ where $I$ is a segment of direction $e$. In other words, directions of closed zone can be separated as Minkowski summands of $P$. The condition of being part of closed zone is not necessary for that.

\begin{definition}
In the notations above, if $I$ is taken to be of length of the shortest edge of $Z$, then $Q$ is called a {\it contraction} of $P$ obtained by {\it zone contraction} process. 
\end{definition} 

In this case the zone defined by the same direction may become open or may completely disappear if $Q$ has no edges of that direction. In both cases $P$ and $Q$ have different combinatorial types and even if $P$ is a Voronoi parallelohedron, then $Q$ does not have to be within a combinatorial or affine type of Voronoi parallelohedron a priori. 

The second series of definitions concerns the structure of the decomposition of the cone of positive definite quadratic forms into the domains of existence. While the latter are never explicitly defined in \cite{Eng}, we think they are secondary cones defined in \cite{5dim}.

\begin{definition}
Let $Q$ be a positive definite quadratic form. The {\it domain of existence} of $Q$ consists of all quadratic forms $Q'$ that satisfy the following condition. For every positive definite quadratic form for we construct a basis with that Gram matrix, and then the lattice with basis. Then $Q'$ belong to the domain of existence of $Q$ if and only if, the Voronoi tilings of the lattices constructed for $Q'$ and $Q$ are equivalent in the sense of Definition \ref{def:equiv} with a stronger condition that we match vectors of the corresponding bases preserving the order.

Another way to think of that notion is that Delone tilings of lattices with Gram matrices $Q$ and $Q'$ are affinely equivalent under the linear map defined by the corresponding bases. The reduction theory of lattices according to their Delone tilings goes back to Voronoi \cite{Vor}.
\end{definition}

Each full-dimensional domain of existence or, equivalently, secondary cone corresponds to Delone triangulations and result from primitive parallelohedra because a small perturbation of the corresponding Gram matrix does not change the combinatorial type of Delone triangulation. On the other hand, for non-primitive Voronoi parallelohedra the secondary cones are not full-dimensional.

Each secondary cone is a convex pointed cone within the cone of positive definite quadratic forms, see \cite{5dim} for more details. The facets of full-dimensional cones are given by linear equations on the coefficients of the corresponding quadratic forms. The equations are called {\it Voronoi regulators} in \cite{5dim}, and the facets are called {\it walls} in \cite{Eng}, but we note that there is no rigorous definition for that notion in \cite{Eng}. However, the walls are characterized by linear conditions on coefficients of quadratic forms coming from more than $d$ facets of a Voronoi parallelohedron incident to one vertex, and these linear conditions coincide with the conditions given by Voronoi regulators.

\addtocontents{toc}{\SkipTocEntry}
\subsection*{Engel's approach}

The main idea of the approach of Engel is in checking that in a given dimension, for a given parallelohedron $P$, every contraction $P$ is a (combinatorially) Voronoi parallelohedron by finding an appropriate Gram matrix for the associated lattice. The result that every contraction of a Voronoi parallelohedron satisfies the Voronoi conjecture is formulated in Lemma \ref{lem:p+i} and was proved by Grishukhin \cite{Gri2} and V\'egh \cite{Vegh-contraction}. Note that this is not enough to claim the Voronoi conjecture in any given dimension unless there is a proof that every parallelohedron can be obtained from a Voronoi parallelohedron by repeated zone contraction process.

While the detailed algorithm in $\R^5$ is not described in \cite{Eng}, our interpretation is the following. The algorithm starts from 222 primitive parallelohedra; we note that the 222nd primitive parallelohedron was first identified in \cite{Eng} while Ryshkov and Baranovski \cite{BR} found only 221 primitive parallelohedra in $\R^5$. For every primitive parallelohedron $P$, the algorithm searches for a closed zone and contracts it. The process of zone contraction repeats till the new parallelohedron has no closed zones at all, such parallelohedra are called {\it totally zone contracted}, or till any further contraction will reduce the dimension of the polytope, such parallelohedra are called {\it relatively zone contracted}. 

If we start from a Voronoi parallelohedron $P=Q+I$, then Engel claims that for contracted parallelohedron $Q$, the quadratic form for the associated lattice can be obtained from the one of $P$ by subtracting the rank one form $v\cdot v^t$ where $v$ is the vector representing segment~$I$, cf. \cite[Sect. 5]{5dim}. This is true if $Q$ is known to be a Voronoi parallelohedron, but Engel was able to verify it independently for every maximal sequence of contractions resulting in totally or relatively zone contracted parallelohedron.

After that Engel claims that since the list of all obtained totally contracted parallelohedra is complete, and each of them is combinatorially equivalent to a Voronoi parallelohedron, then so are their extensions obtained by the operation opposite to contraction. This should imply that ``Every parallelohedron in $\R^5$ is combinatorially equivalent to a Voronoi parallelohedron''. This claimed as the main result of \cite{Eng}.

\addtocontents{toc}{\SkipTocEntry}
\subsection*{Logical gaps in Engel's approach}

Unfortunately, we think that there at least two logical gaps in Engel's approach.

The first one is minor as Engel's proof that ``intermediate'' contractions of primitive parallelohedra are Voronoi parallelohedra relies on the property that (repeated) extensions of zone contracted Voronoi parallelohedra satisfy the Voronoi conjecture. Engel does not provide any evidence of computations related to that property. However, the property holds for any dimension and can be established using properties of free directions that we described in Section \ref{sec:freedir}.

The second gap is much more important and in our opinion makes the claimed main result of \cite{Eng} unproven without possibility to fill the gap. Engel's claim that his approach obtains all totally contracted parallelohedra is not justified and there is no attempt in \cite{Eng00} to justify it. Particularly, it is unclear why every totally zone contracted parallelohedron in $\R^5$ can be obtained from one of 222 primitive parallelohedra by several zone contractions. Without proving that claim, it is possible that there exists a five-dimensional parallelohedron $P$ that cannot be obtained by zone contractions from primitive ones and does not satisfy the Voronoi conjecture.

Moreover, in six dimensions the Voronoi parallelohedron $P$ of the dual root lattice $\mathsf E_6^*$ has the following property: it is totally contracted and does not have a free direction; this example is mentioned in Engel's paper \cite{Eng}. This means that $P$ cannot be obtained by zone contraction from any other six-dimensional parallelohedron.

Existence of a five-dimensional parallelohedron with similar properties will invalidate Engel's arguments right away, and there is no attempt in \cite{Eng} to prove that there is no such a parallelohedron.

\addtocontents{toc}{\SkipTocEntry}
\subsection*{Further expansion of Engel's algorithm}

We also briefly describe results of Engel from \cite{Eng00}. Notably, the paper \cite{Eng00} claims that \cite{Eng} contains a proof of the Voronoi conjecture as stated in the present paper using affine equivalence rather than combinatorial equivalence explicitly claimed in \cite{Eng}.

The main results claimed in \cite{Eng00} concern classification of contraction and subordination types of five-dimensional parallelohedra. More specifically, Engel starts from the list of relatively and totally zone-contracted parallelohedra and uses the operations of zone extensions and zone contractions to obtain all possible parallelohedra that are subdivided into contraction classes and into classes according their subordination symbols. The latter classification is a coarser version of combinatorial equivalence for polytopes.

We want to emphasize that the paper \cite{Eng00} does not make an attempt to justify that the classification of totally and relatively zone-contracted parallelohedra from \cite{Eng} is complete assuming that for further computations.

Thus, the paper \cite{Eng00} does not fill the gap we identified above. We also refer to \cite{5dim} for a detailed analysis of computational results from \cite{Eng00} and for comparison of said results with the classification of five-dimensional Voronoi parallelohedra obtained \cite{5dim}.

\addtocontents{toc}{\SkipTocEntry}
\subsection*{Concluding remarks}

Unlikely to the approach of Engel from \cite{Eng,Eng00}, our proof in the present paper does not rely on totally contracted parallelohedra. Instead of that, for every five-dimensional parallelohedron $P$, we prove that $P$ has a free direction, and then we rely on complete classification of five-dimensional Voronoi parallelohedra from \cite{5dim} and their combinatorial properties, or $P$ has a canonical scaling, and satisfies the Voronoi conjecture as well.

Concluding the discussions of Engel's contributions, we would like to reiterate that in our opinion, his approach relies on a classification of totally zone contracted parallelohedra in $\R^5$ but there is no justification that such a classification is complete or even feasible without some additional assumptions. Without such a justification, we think that approach of \cite{Eng} and \cite{Eng00} contains an irreparable gap.

\end{document}